\documentclass{article}
\usepackage[utf8]{inputenc}
\usepackage{geometry}
\usepackage{amsfonts,amsmath,amsthm,amssymb}
\usepackage[dvipsnames]{xcolor}
\usepackage{hyperref}
\usepackage{array,float}
\usepackage{graphicx}
\usepackage{caption}
\usepackage{enumitem}
\usepackage{mathtools}
\usepackage{tikz-cd}
\usepackage[noadjust]{cite}
\usepackage{graphicx}
\usepackage[normalem]{ulem}
\usepackage{cleveref}
\usepackage{comment,verbatim}

\theoremstyle{plain}
\newtheorem{theorem}{Theorem}[section]
\newtheorem{proposition}[theorem]{Proposition}
\newtheorem{lemma}[theorem]{Lemma}
\newtheorem{conjecture}[theorem]{Conjecture}
\newtheorem{remark}[theorem]{Remark}

\newtheorem{corollary}[theorem]{Corollary}
\newtheorem{example}[theorem]{Example}

\newtheorem{introtheorem}{Theorem}
\newtheorem{introcorollary}[introtheorem]{Corollary}

\theoremstyle{definition}
\newtheorem{definition}[theorem]{Definition}

\newcommand{\R}{\mathbb{R}}
\newcommand{\C}{\mathbb{C}}
\newcommand{\Z}{\mathbb{Z}}
\newcommand{\Q}{\mathbb{Q}}
\newcommand{\E}{\mathcal{O}}

\DeclareMathOperator{\vol}{vol}
\DeclareMathOperator{\Gal}{Gal}

\DeclareMathOperator{\reg}{reg}
\DeclareMathOperator{\Log}{Log}

\hyphenation{pro-blems}
\hyphenation{cha-rac-te-rize}

\title{Classification of totally real number fields via their zeta function, regulator, and log-unit lattice}
\author{Jos\'e Cruz }
\date{\today}

\begin{document}

\maketitle
\abstract{In this paper, assuming the weak Schanuel Conjecture (WSC), we prove that for any collection of pairwise non-arithmetically equivalent totally real number fields, the residues at $s=1$ of their Dedekind zeta functions form a linearly independent set over the field of algebraic numbers. As a corollary, we obtain that, under WSC, two totally real number fields have the same regulator if and only if they have the same class number and Dedekind zeta function. We also prove that, under WSC, the isometry and similarity classes of the log unit lattice of a real Galois number field of degree $[K:\Q]\geq 4$, characterize the isomorphism class of said field. All of our results follow from establishing that, under WSC, any Gram matrix of the log unit lattice of a real Galois number field yields a generic point of certain closed irreducible $\Q$-subvariety of the space of symmetric matrices of appropriate size.}
\tableofcontents

\section{Introduction}
Two number fields are said to be arithmetically equivalent if their Dedekind zeta functions are identical. As a consequence of the class number formula, arithmetically equivalent number fields satisfy that the product $h_K\,\reg(K)$ of their class number and regulator is the same. However, De Smit and Perlis \cite{deSmitPerlis1994} found examples of arithmetically equivalent number fields with distinct class numbers, and therefore distinct regulators. This suggests that the regulator contains information about its number field that is not detected by the zeta function. Motivated by De Smit and Perlis' findings, we decided to search for totally real number fields with the same regulator. All the non isomorphic examples we found turned out to be arithmetically equivalent.


 Now, since the regulator is computed by the covolume of the log unit lattice and the signature of its number
field (see \Cref{definition: regulator}), it is natural to wonder if having the same regulator implies having isometric log unit lattices. It turns out that for all of the examples mentioned above, their log unit lattices are not isometric, furthermore, their log unit lattices are not even similar. 

In this paper, we address the following questions which are inspired by the above phenomena. All of our answers are conditional on the Weak Schanuel Conjecture. See \Cref{conjecture: independence of logs}.
   
\begin{enumerate}
    \item\label{question: same regulator}  Are two totally real number fields with the same regulator arithmetically equivalent? 
    \item\label{question: same shape}  Are two totally real number fields with isometric (or similar) log unit lattices isomorphic? 
\end{enumerate} 

We will see in the sequel that the above questions are deeply connected to the following problem:
\begin{enumerate}
    \item[3.]\label{question: linear independence} Let $K_1,\dots, K_n$ be totally real number fields that are pairwise not arithmetically equivalent, and let $\zeta_{K_i}(s)$, for $i=1,\dots,n$, be the Dedekind zeta function of $K_i$. Is the set\, $\mathfrak{res}:=\{\mathrm{res}_{s=1}\zeta_{K_i}(s)\}_{i=1}^n$\, linearly independent over the field of all algebraic numbers? 
\end{enumerate}
Let us mention that Question \ref{question: same shape} was already asked for the similarity class of totally imaginary $D_6$-sextic number fields by \cite{BIRS-25frg504-2025}, and their findings depend on the signature of the cubic subfield of the sextic. We will give more details on these results in \Cref{subsubsection: log unit lattice}.

\subsection{Main results}
We now state our answers to the above questions.
\subsubsection{On number fields with the same regulator and the linear independence of the residues}
Let $\overline{\Q}\subset \C$ be the field of all algebraic numbers. If we think of $\C$ as a $\overline{\Q}$-vector space, the class number formula yields the following identity in the projective space $\mathbb{P}_{\overline{\Q}}(\C)$:
\[
   [\mathrm{res}_{s=1}\zeta_K(s)]=[\reg(K)],\quad\text{for any totally real number field $K/\Q$}.
\]
 Our first result states that for totally real number fields, the zeta function is completely determined by the projective point $[\mathrm{res}_{s=1}\zeta_K(s)]\in\mathbb{P}_{\overline \Q}(\C)$, or equivalently, by the projective point $[\reg(K)]$.
\begin{introtheorem}[\Cref{cor: Number fields with the same regulator}]\label{introtheorem: number field with the same regulator}
    Assume the weak Schanuel conjecture (\Cref{conjecture: independence of logs}). Let $K_1$ and $K_2$ be two totally real number fields. If $\mathrm{res}_{s=1}\zeta_{K_1}(s)$ and $\mathrm{res}_{s=1} \zeta_{K_2}(s)$ are linearly dependent over the field of algebraic numbers $\overline{\Q}$, then $\zeta_{K_1}=\zeta_{K_2}$. Furthermore, $\reg(K_1)=\reg(K_2)$ if and only if $\zeta_{K_1}=\zeta_{K_2}$ and  $h_{K_1}=h_{K_2}$, where $h_{K_1}$ and $h_{K_2}$ are the class numbers of $K_1$ and $K_2$, respectively.
\end{introtheorem}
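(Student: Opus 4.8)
The plan is to derive \Cref{introtheorem: number field with the same regulator} formally from two ingredients: (i) the paper's main technical theorem, namely that whenever $K_1,\dots,K_n$ are totally real number fields no two of which are arithmetically equivalent, the residues $\mathrm{res}_{s=1}\zeta_{K_i}(s)$ are linearly independent over $\overline{\Q}$ (the affirmative answer to the third question raised above, which is what is ultimately extracted from the weak Schanuel conjecture together with the genericity/irreducibility statement of the abstract); and (ii) the analytic class number formula. Granting these, I would first record the totally real specialization of (ii): for a totally real field $K$ of degree $n=[K:\Q]$ one has $r_1=n$, $r_2=0$, $w_K=2$ (the only roots of unity in a totally real field are $\pm 1$), and $d_K>0$, so that
\[
  \mathrm{res}_{s=1}\zeta_K(s)\;=\;\frac{2^{n-1}\,h_K\,\reg(K)}{\sqrt{d_K}}\;=\;\alpha_K\cdot\reg(K),
  \qquad
  \alpha_K:=\frac{2^{n-1}h_K}{\sqrt{d_K}}\in\overline{\Q}^{\times}.
\]
The key point is that $\alpha_K$ is a genuinely \emph{nonzero algebraic} number, since $2^{n-1}$ and $h_K$ are positive integers and $d_K$ is a positive integer.

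The first assertion of the theorem is the contrapositive of ingredient (i) in the case $n=2$. Indeed, suppose $\mathrm{res}_{s=1}\zeta_{K_1}(s)$ and $\mathrm{res}_{s=1}\zeta_{K_2}(s)$ are linearly dependent over $\overline{\Q}$ but $\zeta_{K_1}\neq\zeta_{K_2}$. Then $K_1$ and $K_2$ are not arithmetically equivalent, so by the main theorem $\{\mathrm{res}_{s=1}\zeta_{K_1}(s),\,\mathrm{res}_{s=1}\zeta_{K_2}(s)\}$ is linearly independent over $\overline{\Q}$, a contradiction. Hence $\zeta_{K_1}=\zeta_{K_2}$.

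For the ``regulator'' equivalence I would argue in both directions. If $\zeta_{K_1}=\zeta_{K_2}$ and $h_{K_1}=h_{K_2}$, then $K_1$ and $K_2$ are arithmetically equivalent, so (as recalled in the introduction) $h_{K_1}\reg(K_1)=h_{K_2}\reg(K_2)$; cancelling the equal class numbers gives $\reg(K_1)=\reg(K_2)$. Conversely, if $\reg(K_1)=\reg(K_2)$, then by the displayed class number formula
\[
  \frac{\mathrm{res}_{s=1}\zeta_{K_1}(s)}{\mathrm{res}_{s=1}\zeta_{K_2}(s)}
  \;=\;\frac{\alpha_{K_1}\reg(K_1)}{\alpha_{K_2}\reg(K_2)}
  \;=\;\frac{\alpha_{K_1}}{\alpha_{K_2}}\;\in\;\overline{\Q}^{\times},
\]
so the two residues are linearly dependent over $\overline{\Q}$; the first assertion then gives $\zeta_{K_1}=\zeta_{K_2}$, whence $h_{K_1}\reg(K_1)=h_{K_2}\reg(K_2)$, and cancelling the equal, nonzero regulators gives $h_{K_1}=h_{K_2}$. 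This establishes both halves of the statement.

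I do not expect any genuine obstacle at the level of this corollary: the only care needed is to check that the scalar $\alpha_{K_1}/\alpha_{K_2}$ relating the two residues is honestly a nonzero algebraic number --- which is immediate from the integrality and positivity of class numbers and discriminants --- and to invoke the standard fact that arithmetically equivalent fields have the same product $h_K\reg(K)$. All of the difficulty lies upstream, in establishing ingredient (i): that is precisely where the weak Schanuel conjecture and the assertion that any Gram matrix of the log unit lattice of a real Galois number field is a generic point of a closed irreducible $\Q$-subvariety of the space of symmetric matrices do the actual work.
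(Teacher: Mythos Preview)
Your argument is correct and follows essentially the same route as the paper: the first assertion is the $n=2$ contrapositive of \Cref{Theorem: RegulatorArithmeticInvariant}, and the regulator equivalence is then read off from the class number formula via the identity $h_{K_1}\reg(K_1)=h_{K_2}\reg(K_2)$ for arithmetically equivalent fields. The only difference is cosmetic---you make the algebraicity of $\alpha_K=2^{n-1}h_K/\sqrt{d_K}$ explicit, whereas the paper leaves this implicit.
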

This implies that for totally real number fields, the regulator characterizes both the Dedekind zeta function and the class number of its number field. Note, however, that there are examples of arithmetically equivalent number fields with same regulator, and therefore same class number, but with non-isomorphic ideal class groups. Hence, the regulator cannot capture the isomorphism class of the ideal class group.

\Cref{introtheorem: number field with the same regulator} is a corollary of the main conditional theorem of this paper, which establishes both the $\overline{\Q}$-linear independence at $s=1$ of distinct Dedekind zeta functions of totally real number fields and the $\overline{\Q}$-linear independence of the corresponding regulators. 
\begin{introtheorem}[\Cref{Theorem: RegulatorArithmeticInvariant}]\label{introtheorem: linear independence}
     Assume \Cref{conjecture: independence of logs} and let $K_1,\, K_2,\,\dots,\,K_n$ be totally real number fields all distinct and different from $\Q$ such that $\zeta_{K_i}\neq\zeta_{K_j}$ for every $i\neq j$. Then, the sets  $\mathfrak{reg}:=\{\reg(K_i)\}_{i=1}^n$ \text{and} $\mathfrak{res}:=\{\mathrm{res}_{s=1}\zeta_{K_i}(s)\}_{i=1}^n$ are linearly independent subsets of $\C$ over $\overline{\Q}$.  
\end{introtheorem}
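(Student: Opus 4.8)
The plan is to deduce everything from the paper's structural theorem (announced in the abstract): that, under WSC, the Gram matrix of the log unit lattice of a real Galois number field is a generic point of an irreducible $\Q$-subvariety of the space of symmetric matrices. First I would dispose of $\mathfrak{res}$: by the class number formula, for totally real $K$ of degree $d$ one has $\mathrm{res}_{s=1}\zeta_K(s)=2^{\,d-1}h_K\reg(K)/\sqrt{d_K}$, and since $2^{\,d-1}h_K/\sqrt{d_K}\in\overline{\Q}^{\times}$, the set $\mathfrak{res}$ is $\overline{\Q}$-linearly independent if and only if $\mathfrak{reg}$ is; so it suffices to treat $\mathfrak{reg}$. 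Next, let $M/\Q$ be the Galois closure of the compositum $K_1\cdots K_n$; it is totally real and Galois, with $\Gamma:=\Gal(M/\Q)$ and $H_i:=\Gal(M/K_i)$. Since $\mathcal{O}_{K_i}^{\times}=(\mathcal{O}_M^{\times})^{H_i}$, the log unit lattice of $K_i$ is, up to a rescaling by a factor in $\overline{\Q}^{\times}$, the $H_i$-fixed sublattice $(\Lambda_M)^{H_i}$ of $\Lambda_M$ inside the ambient space $W$ (the trace-zero hyperplane of the log embedding of $M$), which carries the standard Euclidean inner product and the action of $\Gamma$ isomorphic to the regular representation minus its trivial summand. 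Consequently $\reg(K_i)^2=\gamma_i\cdot\det\bigl(\text{Gram matrix of }(\Lambda_M)^{H_i}\bigr)$ with $\gamma_i\in\overline{\Q}^{\times}$, and this Gram determinant is a polynomial over $\Q$, depending only on $\Gamma$ and $H_i$, in the entries of the Gram matrix $G_M$ of $\Lambda_M$ together with the integral matrices of the $\Gamma$-action.

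The key structural input on the $\reg(K_i)$ side is the isotypic decomposition. Writing $W=\bigoplus_{\chi}W_{\chi}$ over the nontrivial irreducible $\Q$-representations $\chi$ of $\Gamma$, the components $W_{\chi}$ are mutually orthogonal for the $\Gamma$-invariant inner product and the decomposition is defined over $\Q$, so $(\Lambda_M)^{H_i}$ splits, up to finite index, along the subspaces $(W_{\chi})^{H_i}$. Moreover $W_{\chi}\cong\chi\otimes V_{\chi}$ for the multiplicity space $V_{\chi}$, and $(W_{\chi})^{H_i}\cong\chi^{H_i}\otimes V_{\chi}$, the $\chi$-fixed-space dimension being $a_{i,\chi}$, the multiplicity of $\chi$ in the permutation character of $\Gamma$ on $\Gamma/H_i$. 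A Gram-determinant computation then yields $\det\bigl(\text{Gram of }(\Lambda_M)^{H_i}\bigr)=q_i\prod_{\chi}R_{\chi}^{\,a_{i,\chi}}$ with $q_i\in\Q_{>0}$, where each $R_{\chi}>0$ is a function of $G_M$ depending only on $\chi$ (the block determinant of the restriction of the invariant metric to a fixed lattice in $V_{\chi}$, with the scalar acting on $\chi$ folded in). Hence $\reg(K_i)=\delta_i\prod_{\chi}R_{\chi}^{\,a_{i,\chi}/2}$ with $\delta_i\in\overline{\Q}^{\times}$, using positive real square roots throughout.

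To finish, invoke the structural theorem for $M$: under WSC, $G_M$ is a generic point of an irreducible $\Q$-subvariety $V_{\Gamma}$, and — matching $\dim V_{\Gamma}$ with the number of nontrivial irreducible $\Q$-representations of $\Gamma$ — the functions $R_{\chi}$ are algebraically independent over $\overline{\Q}$, hence so are their positive square roots. By Gassmann's theorem, $\zeta_{K_i}\neq\zeta_{K_j}$ is equivalent to the permutation characters of $\Gamma$ on $\Gamma/H_i$ and $\Gamma/H_j$ being distinct, i.e. to $(a_{i,\chi})_{\chi}\neq(a_{j,\chi})_{\chi}$. Therefore the numbers $\reg(K_i)$ are $\overline{\Q}^{\times}$-multiples of pairwise distinct monomials in the algebraically independent quantities $\{R_{\chi}^{1/2}\}$, and distinct monomials in algebraically independent elements are $\overline{\Q}$-linearly independent; this gives the independence of $\mathfrak{reg}$, and by the first reduction also of $\mathfrak{res}$. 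The main obstacle is precisely the structural input: proving that $G_M$ is generic on $V_{\Gamma}$ and, crucially, that this forces the isotypic block functions $R_{\chi}$ to be algebraically independent — equivalently, pinning down $\dim V_{\Gamma}$ and checking that the $R_{\chi}$ form a transcendence basis. This is where WSC enters (via WSC applied to the logarithms of a fundamental system of units of $M$, which are $\Q$-linearly independent and hence algebraically independent), since a priori the logarithms of units could satisfy unexpected algebraic relations collapsing the $R_{\chi}$; translating this into the genericity statement on $V_{\Gamma}$ is the heart of the argument.
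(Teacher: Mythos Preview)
Your proposal is correct in its overall strategy and reaches the conclusion by a route that is genuinely different from, and in some ways more direct than, the paper's own argument.

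Both approaches rest on the same structural input (the genericity theorem for the Gram form of the log unit lattice of the Galois closure) and both pass through the isotypic decomposition of $R_{\overline{\Q}}[G]$. The divergence is in how linear independence is extracted from genericity. The paper argues by induction on $n$: given a putative relation $\sum_i \lambda_i\,\reg(K_i)=0$, it uses a squaring-map lemma on hyperplanes to turn this into a hypersurface containing the image of $\Psi:\mathfrak{b}\mapsto[\psi_1(\mathfrak{b}):\cdots:\psi_n(\mathfrak{b})]$, then rescales one isotypic component of $\mathfrak{b}$ by a parameter $c$ and reads off, from the leading coefficient in $c$, a relation supported on a proper subset $D_{\pi'}\subsetneq[n]$, to which the induction hypothesis applies. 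You instead observe once and for all that, writing the invariant form on each isotypic block $W_\chi\cong\chi\otimes V_\chi$ as $\phi_\chi\otimes B_\chi$, the Gram determinant on $(W_\chi)^{H_i}=\chi^{H_i}\otimes V_\chi$ equals $(\text{const}_{i,\chi})\cdot\det(B_\chi)^{a_{i,\chi}}$, so that $\reg(K_i)^2$ is a $\overline{\Q}^\times$-multiple of the monomial $\prod_\chi R_\chi^{a_{i,\chi}}$ with $R_\chi=\det(B_\chi)$. Gassmann's criterion makes the exponent vectors $(a_{i,\chi})_\chi$ pairwise distinct, and since the $R_\chi$ live on disjoint factors of the product variety $\mathrm{Sym}^G(R)\cong\prod_\chi\mathrm{Sym}^G(W_\chi)$, they are algebraically independent at the generic point; distinct monomials in algebraically independent quantities (and their positive square roots) are then $\overline{\Q}$-linearly independent. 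This avoids both the induction and the squaring-map lemma, at the cost of having to verify the monomial factorization carefully across the real, complex-pair, and quaternionic types (for instance, one needs that $a_{i,\chi}$ is automatically even when $\chi$ is symplectic, so that the skew block determinant does not vanish).

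One correction: your parenthetical ``equivalently, pinning down $\dim V_\Gamma$ and checking that the $R_\chi$ form a transcendence basis'' is not right. The dimension of $\mathrm{Sym}^G(R)$ is $\sum_\chi\dim\mathrm{Sym}^G(W_\chi)$, which for nonabelian $\Gamma$ strictly exceeds the number of nontrivial irreducibles (a real-type block of dimension $m_\chi$ already contributes $\binom{m_\chi+1}{2}$). The $R_\chi$ are algebraically independent not because they form a transcendence basis but because they are nonconstant functions on distinct irreducible factors of a product; that weaker fact is all your argument needs, and genericity of $G_M$ then transfers it to the values.
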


\subsubsection{On the log unit lattice}

Let us now state our results concerning the log unit lattice. Since the covolume of the log unit lattice is the regulator multiplied by an algebraic number (see \Cref{definition: regulator}), it follows that the isometry class of the log unit lattice determines the projective point $[\reg(K)]\in \mathbb P_{\overline{\Q}}(\C)$. As a direct consequence of \Cref{introtheorem: number field with the same regulator}, we find that, for totally real number fields, the isometry class conveys even more information: 
\begin{introcorollary}[\Cref{cor: isometry}]\label{introcor: isometry}
    Assume \Cref{conjecture: independence of logs}. Let $K_1$ and $K_2$ be two totally real number fields and let $\Lambda_{K_1}$ and $\Lambda_{K_2}$ be the log unit lattices of $K_1$ and $K_2$, respectively. If $\Lambda_{K_1}$ and $\Lambda_{K_2}$ are isometric, then $\reg(K_1)=\reg(K_2)$. In particular, $\zeta_{K_1}=\zeta_{K_2}$ and $h_{K_1}=h_{K_2}$, where $h_{K_1}$ and $h_{K_2}$ are the class numbers of the corresponding number fields.
\end{introcorollary}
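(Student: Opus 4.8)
The plan is to derive this corollary directly from \Cref{introtheorem: number field with the same regulator} (equivalently \Cref{cor: Number fields with the same regulator}), after one unconditional observation about lattices. First I would record that an isometry $\Lambda_{K_1}\cong\Lambda_{K_2}$ preserves both the rank and the covolume. By Dirichlet's unit theorem, for a totally real field $K$ of degree $n=[K:\Q]$ the lattice $\Lambda_K$ has rank $n-1$, so the isometry forces $[K_1:\Q]=[K_2:\Q]=:n$. By \Cref{definition: regulator}, $\mathrm{covol}(\Lambda_K)=c_n\,\reg(K)$ for a nonzero constant $c_n$ depending only on $n$; for a totally real field there is no factor of $\pi$ (indeed $r_2=0$ and $w_K=2$), so $c_n\in\overline{\Q}^{\times}$, concretely $c_n=\sqrt{n}$. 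Since the two covolumes coincide and the constants agree, this already yields $\reg(K_1)=\reg(K_2)$, with no appeal to WSC.

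Second, I would feed $\reg(K_1)=\reg(K_2)$ into the last assertion of \Cref{introtheorem: number field with the same regulator}: under WSC, $\reg(K_1)=\reg(K_2)$ holds if and only if both $\zeta_{K_1}=\zeta_{K_2}$ and $h_{K_1}=h_{K_2}$. As the left-hand side is true, we obtain $\zeta_{K_1}=\zeta_{K_2}$ and $h_{K_1}=h_{K_2}$, which is exactly the ``in particular'' clause. One can also read the first step through the projective picture emphasized just before the statement: an isometry determines $[\reg(K)]\in\mathbb{P}_{\overline{\Q}}(\C)$, hence by the class number formula it determines $[\mathrm{res}_{s=1}\zeta_K(s)]$, and then \Cref{introtheorem: linear independence} forces $\zeta_{K_1}=\zeta_{K_2}$; but to upgrade the projective equality to the honest equality $\reg(K_1)=\reg(K_2)$ one must still use the equality of covolumes together with the equality of degrees, exactly as above.

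There is essentially no obstacle here: all the substance lies in \Cref{introtheorem: number field with the same regulator}, and this corollary is bookkeeping. The one point needing care is that the proportionality constant between $\mathrm{covol}(\Lambda_K)$ and $\reg(K)$ must depend only on invariants already known to agree — here just the degree, pinned down by the equality of ranks — so that equality of covolumes genuinely upgrades to equality of regulators rather than merely equality of the projective points. Note, finally, that this elementary argument does not reach the similarity class: the scaling factor of a similarity need not be algebraic, so similarity only controls $[\reg(K)]$ up to an a priori transcendental multiple; that case is handled separately via the variety argument for real Galois fields of degree at least $4$.
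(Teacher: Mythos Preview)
Your proposal is correct and matches the paper's own proof essentially verbatim: isometry forces equal ranks, hence equal degrees $n$, and then the formula $\reg(K)=\mathrm{covol}(\Lambda_K)/\sqrt{n}$ from \Cref{definition: regulator} gives $\reg(K_1)=\reg(K_2)$, after which \Cref{cor: Number fields with the same regulator} yields $\zeta_{K_1}=\zeta_{K_2}$ and $h_{K_1}=h_{K_2}$. Your additional remarks on the projective viewpoint and on why similarity does not suffice are accurate but go beyond what the paper records in its proof.
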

We conjecture that two totally real number fields with isometric log unit lattices are isomorphic, but the proof of this seems untractable at the moment. Nevertheless, we show in \Cref{example: nonisometric} a pair of totally real number fields with the same regulator but non-isometric and even non-similar log unit lattices. Hence, the isometry class of the log unit lattice conveys more information than the regulator about the isomorphism class of its field. This example can be found in \cite{mantilla2015weak} and was suggested to the author of this work by Mantilla-Soler. 

As for the similarity class, we obtained information only in the real Galois case. In this case, when one avoids degenerate cases, equality of the similarity classes implies isomorphism of the number fields.
\begin{introtheorem}\label{introtheorem: sameshape}
    Assume the weak Schanuel conjecture \Cref{conjecture: independence of logs}.
    Let $K_1/\Q$ and $K_2/\Q$ be two real Galois number fields. For $i=1,2$, let $n_i:=[K_i:\Q]$ and $G_i:=\Gal(K_i/\Q)$. If $n_1,n_2>3$, then $[\Lambda_{K_1}]=[\Lambda_{K_2}]$\, if and only if $K_1\cong K_2$.
\end{introtheorem}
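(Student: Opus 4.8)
The implication $K_1 \cong K_2 \Rightarrow [\Lambda_{K_1}] = [\Lambda_{K_2}]$ is the easy one: any field isomorphism $K_1 \xrightarrow{\sim} K_2$ induces a bijection between the sets of real embeddings and hence an isometry $\Lambda_{K_1} \xrightarrow{\sim} \Lambda_{K_2}$, which is a fortiori a similarity. For the converse I would start with the obvious normalizations: similar lattices have equal rank, and $\rank \Lambda_{K_i} = n_i - 1$, so $[\Lambda_{K_1}] = [\Lambda_{K_2}]$ already forces $n_1 = n_2 =: n$, with $n \ge 4$ by hypothesis. Choosing $\Z$-bases of $\Lambda_{K_1}$ and $\Lambda_{K_2}$, with associated positive-definite Gram matrices $G_1, G_2$ in the affine space $\mathrm{Sym}_{n-1}$ of symmetric $(n-1)\times(n-1)$ matrices, the equality of similarity classes unpacks to a relation
\[
  G_2 = \lambda\, P^{\mathsf{T}} G_1 P, \qquad P \in GL_{n-1}(\Z),\ \ \lambda \in \R_{>0}.
\]
Comparing determinants — using $\det(P^{\mathsf{T}} G_1 P) = \det G_1$, and $\det G_i = n\,\reg(K_i)^2$, the latter because the covolume of the log unit lattice of a totally real field of degree $n$ is $\sqrt{n}\,\reg$ (see \Cref{definition: regulator}) — one obtains $\lambda^{n-1} = \reg(K_2)^2/\reg(K_1)^2$.

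The point of this identity is that it would suffice to prove that $\lambda$ is algebraic. Granting that, $\reg(K_2)^2/\reg(K_1)^2 \in \overline{\Q}$, so $\reg(K_1)$ and $\reg(K_2)$ are $\overline{\Q}$-linearly dependent; by the case $n=2$ of \Cref{introtheorem: linear independence} this forces $K_1$ and $K_2$ to be arithmetically equivalent, i.e.\ $\zeta_{K_1} = \zeta_{K_2}$; and since $K_1$ and $K_2$ are Galois over $\Q$, a classical fact — a number field arithmetically equivalent to a field Galois over $\Q$ is isomorphic to it (the Gassmann-equivalent subgroup attached to the second field must coincide with the normal subgroup attached to the first) — gives $K_1 \cong K_2$. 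So the entire proof reduces to showing that the similarity ratio $\lambda$ is algebraic.

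The tool for this is the structural theorem proved in the body of the paper: assuming \Cref{conjecture: independence of logs}, each Gram matrix of the log unit lattice of a real Galois number field is a generic point of a closed irreducible $\Q$-subvariety of $\mathrm{Sym}_{n-1}$ that depends only on the degree and on the rational Galois-module structure $\Q[\Gal(\,\cdot\,/\Q)]/\Q$ of the unit group, hence only on the abstract Galois group. Writing $V_i$ for the variety attached to $K_i$, I would argue as follows. The congruence $\sigma_P \colon A \mapsto P^{\mathsf{T}} A P$ is a $\Z$-automorphism of $\mathrm{Sym}_{n-1}$ stabilizing $V_i$, so $\sigma_P(G_1)$ is again a generic point of $V_1$ with the same coordinate field as $G_1$; since $G_2 = \lambda\, \sigma_P(G_1)$, the points $G_2$ and $\sigma_P(G_1)$ project to the same point of $\mathbb{P}(\mathrm{Sym}_{n-1})$, which is a generic point of both $\mathbb{P} V_2$ and $\mathbb{P} V_1$, forcing $\mathbb{P} V_1 = \mathbb{P} V_2$. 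One then extracts from the structural theory that, for $n \ge 4$, this common projectivized variety determines the Galois group (so $V_1 = V_2$) and is positive-dimensional, with enough rational equations that a proportionality $G_2 = \lambda\,\sigma_P(G_1)$ between two of its generic points can hold only when $\lambda$ is algebraic over $\Q$: heuristically, the transcendental shape invariants of $\Lambda_{K_1}$ and of $\Lambda_{K_2}$ are, by \Cref{conjecture: independence of logs}, as algebraically independent as the linear relations among logarithms of units permit, and they cannot be matched up by a non-algebraic scaling without producing an impossible algebraic relation among logarithms of algebraic numbers. Combined with the reduction above, this gives $K_1 \cong K_2$.

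The last point is the heart of the matter and the step I expect to be the main obstacle: converting the genericity of the Gram matrices into the algebraicity of $\lambda$. It is also precisely where the hypothesis $n > 3$ is used. For $n \le 3$ the relevant variety is a single point projectively — in degree $2$ the log unit lattice has rank $1$, and in degree $3$ the log unit lattice of every real cyclic cubic field is similar to the hexagonal lattice — so $\lambda$ is unconstrained; and indeed in those degrees the similarity class of $\Lambda_K$ genuinely fails to determine $K$, so the restriction to $n_1, n_2 > 3$ is necessary.
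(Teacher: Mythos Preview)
Your reduction ``$\lambda$ algebraic $\Rightarrow$ $\reg(K_1),\reg(K_2)$ are $\overline{\Q}$-dependent $\Rightarrow$ $\zeta_{K_1}=\zeta_{K_2}$ $\Rightarrow$ $K_1\cong K_2$ (Galois case)'' is correct and clean. The gap is exactly where you flag it: you cannot extract the algebraicity of $\lambda$ from the \emph{separate} genericity of $G_1$ and $G_2$. First, the claim that $\sigma_P$ stabilises $V_i$ is false: $V_i$ is the linear space of $G_i$-invariant forms written in a fixed basis, and congruence by an arbitrary $P\in GL_{n-1}(\Z)$ does not preserve $G_i$-invariance. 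What you actually get is that $P^{\mathsf T}G_1P$ is generic in the conjugate linear space $V_1':=P^{\mathsf T}V_1P$, and then $\mathbb P V_1'=\mathbb P V_2$. But even granting $V_1'=V_2=:V$, two $\overline{\Q}$-generic points of a $\overline{\Q}$-linear space $V$ may well be proportional by a transcendental scalar: take algebraically independent $x_1,\dots,x_d,\lambda$ and set $y_i=\lambda x_i$; then both $(x_i)$ and $(y_i)$ are generic in $V$. Genericity of each point says nothing about their \emph{joint} behaviour, and it is precisely a joint statement you need.

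The paper supplies the missing idea by passing to the compositum $L=K_1K_2$ with Galois group $G$. A weak Minkowski unit of $L$ produces, via \Cref{prop: norm of weak Mink unit}, Gram data for $\Lambda_{K_1}$, $\Lambda_{K_2}$, and $\Lambda_{K_1\cap K_2}$ as \emph{coordinates of a single} $\overline{\Q}$-generic point of $\mathrm{Sym}^G$; eliminating $\lambda$ from the similarity relation then gives a $\overline{\Q}$-algebraic equation satisfied by that generic point, hence identically on the whole variety. This forces a dichotomy: either the ``intersection'' block is the whole thing (so $K_1=K_2$), or it is zero (so $K_1\cap K_2=\Q$), in which case joint genericity in the \emph{product} $\mathrm{Sym}^{G_1}\times\mathrm{Sym}^{G_2}$ together with the similarity relation bounds each factor to dimension $\le 1$, contradicting $n>3$. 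The move to the compositum is what converts ``two generic points'' into ``one generic point in a product,'' and that is the step your outline is missing.
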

Note that arithmetically equivalent Galois number fields must be isomorphic, thus, if we restrict ourselves to real Galois number fields, \Cref{introtheorem: number field with the same regulator} and \Cref{introtheorem: sameshape}  already imply that the regulator and the isometry class of the log unit lattice completely determine the isomorphism class of their number field. 

 \subsubsection{On the genericity of the log unit lattice} We now present The main technical tool of the paper: \Cref{theorem: log-lats are generic}. Let us motivate the statement of the result. First, fix a group $G$ and consider the family of number fields $\mathcal{F}(G):=\{K:\ K/\Q \text{ is Galois, $\Gal(K/\Q)\cong G$ and $K\subset\R$}\}$. \Cref{theorem: log-lats are generic} is the result of the following informal procedure for studying the log unit lattices of number fields in the family $\mathcal{F}(G)$. 
\begin{enumerate}
    \item Find a suitable closed subvariety $\mathcal V_G$ of the space of complex symmetric matrices $\mathrm{Sym}^{n-1}(\C)$ that contains a representative of the Gram matrix, modulo rational changes of basis, of the log unit lattice of every member of the family $\mathcal{F}(G)$. This is inspired by the results of \cite{HHV}. 
    \item Use the Weak Schanuel Conjecture to conclude that no Zariski closed subset defined over $\Q$ of $\mathrm{Sym}^{n-1}(\C)$ contains a Gram matrix  of the log unit lattice of a member of $\mathcal{F}(G)$. This is inspired on \cite{RNT}.
\end{enumerate}
Now consider the quotient ring $R:=\Z[G]/(N_G)$ and let $R_\C:=\C[G]/(N_G)$. Then for any $K\in \mathcal{F}(G)$, the action of the Galois group $G$ endows the log unit lattice $\Lambda_K$ with the structure of a left $R$-module. It turns out that the variety $\mathcal{V}_G$ we are looking for is given by the space $\mathrm{Sym}^G(R_\C)$ of symmetric $G$-invariant complex bilinear forms on the $\C$-vector space $R_\C$. The following theorem is a simplified version of \Cref{theorem: log-lats are generic}.

\begin{introtheorem}\label{introtheorem: genericity}
    Assume Conjecture \ref{conjecture: independence of logs}. Let $K/\Q$ be a real Galois number field with Galois group $G$ and let $\Lambda_K$ be its log unit lattice. 
    let $\mathcal{V}_G$ be as above. Suppose $u\in \E_K^\times$ is a weak Minkowski unit\footnote{A unit $u\in \E_{K}^\times$ is said to be a weak Minkowski unit, if the unit and its conjugates generate a finite index subgroup of $\E_K^\times$.} (see \Cref{Theorem: existence of weak Minkowski unit}). If $v=\Log(u)$, then, the bilinear form $\mathfrak{Gr}_v:R_\C\times R_\C\rightarrow \C,\,\mathfrak{Gr}_v(\alpha,\beta)=\langle \alpha v,\beta v\rangle$,
    is not contained in any Zariski-closed proper subset defined over $\overline{\Q}$ of $\mathcal{V}_G$.
\end{introtheorem}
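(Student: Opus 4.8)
The plan is to reduce the statement to an application of the Weak Schanuel Conjecture (\Cref{conjecture: independence of logs}) by expressing the entries of the Gram matrix $\mathfrak{Gr}_v$ as $\overline{\Q}$-linear combinations of products of logarithms of algebraic numbers, and then to show that any polynomial relation over $\overline{\Q}$ satisfied by these entries would contradict the algebraic independence supplied by WSC, \emph{unless} that relation already holds identically on $\mathcal V_G$. Concretely: pick a $\Z$-basis of $R$ (say the images of the group elements, or rather a basis of $\Z[G]/(N_G)$), and write $\mathfrak{Gr}_v(g_i,g_j)=\langle g_i v, g_j v\rangle$. Since $v=\Log(u)$ has coordinates $\log|\sigma(u)|$ as $\sigma$ ranges over the real embeddings, and $G$ permutes these embeddings, each entry $\mathfrak{Gr}_v(g_i,g_j)$ is a $\Z$-linear combination of products $\log|\sigma(u)|\cdot\log|\tau(u)|$. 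Thus the whole Gram matrix is built out of the finitely many real numbers $\ell_\sigma:=\log|\sigma(u)|$, which are logarithms of the algebraic numbers $|\sigma(u)|$ (or, to stay inside WSC's hypotheses about logs of algebraic numbers, one works with $\log\sigma(u)$ suitably — I would follow whatever normalization \Cref{conjecture: independence of logs} uses).

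Next I would identify exactly which $\Q$-linear relations the $\ell_\sigma$ satisfy: because $u$ is a weak Minkowski unit, its conjugates span a finite-index subgroup of $\E_K^\times$, so the vector $v=\Log(u)$ and its $G$-translates span the log-unit lattice $\Lambda_K$ over $\Q$; hence the only $\Q$-linear relation among the $\ell_\sigma$ is the single relation coming from $\Lambda_K\subset H$ (the trace-zero hyperplane), i.e. $\sum_\sigma \ell_\sigma=0$. This means the $\ell_\sigma$, modulo that one relation, are as "independent" as WSC allows: WSC should give that the transcendence degree of $\Q(\ell_\sigma:\sigma)$ equals $\dim_\Q(\text{span of the }\ell_\sigma)=n-1$, i.e. the $\ell_\sigma$ become algebraically independent over $\overline{\Q}$ after we quotient by the linear span relation. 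In other words, choosing a basis $\ell_{\sigma_1},\dots,\ell_{\sigma_{n-1}}$ of the span, these are algebraically independent over $\overline{\Q}$.

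With that in hand the genericity is essentially formal. The assignment sending a point $(t_1,\dots,t_{n-1})$ to the Gram matrix of the lattice spanned by the $G$-translates of the vector with those coordinates is a morphism $\Phi\colon \mathbb A^{n-1}\to \mathrm{Sym}^{n-1}$ defined over $\Q$, whose image's Zariski closure is exactly $\mathcal V_G=\mathrm{Sym}^G(R_\C)$ (this is the content of step (1) in the paper's informal procedure, which I take as established, cf. the reference to \cite{HHV}); and $\mathfrak{Gr}_v=\Phi(\ell_{\sigma_1},\dots,\ell_{\sigma_{n-1}})$. If $W\subsetneq \mathcal V_G$ were a $\overline{\Q}$-closed subset containing $\mathfrak{Gr}_v$, then $\Phi^{-1}(W)$ is a $\overline{\Q}$-closed subset of $\mathbb A^{n-1}$ containing the point $(\ell_{\sigma_1},\dots,\ell_{\sigma_{n-1}})$; since these coordinates are algebraically independent over $\overline{\Q}$, that point lies on no proper $\overline{\Q}$-subvariety, so $\Phi^{-1}(W)=\mathbb A^{n-1}$, whence $W\supseteq \overline{\Phi(\mathbb A^{n-1})}=\mathcal V_G$ (using irreducibility/surjectivity onto a dense subset), contradicting $W\subsetneq\mathcal V_G$.

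The main obstacle I anticipate is not the formal descent argument but pinning down the two number-theoretic inputs precisely: first, verifying that the \emph{only} $\overline{\Q}$-linear relation — indeed the only algebraic relation up to the span — among the $\ell_\sigma=\log|\sigma(u)|$ is $\sum_\sigma\ell_\sigma=0$, which requires combining the weak-Minkowski-unit property with the exact statement of WSC (in particular handling the subtlety that WSC concerns $\overline{\Q}$-linear independence of logs of algebraic numbers, and translating that into algebraic independence of a maximal independent subset); and second, making rigorous that $\overline{\Phi(\mathbb A^{n-1})}=\mathcal V_G$ with $\mathcal V_G$ irreducible over $\Q$, so that "contains a dense-image point" really does force "contains all of $\mathcal V_G$." Both are expected to be handled by the lemmas preceding \Cref{theorem: log-lats are generic}; here I would simply assemble them.
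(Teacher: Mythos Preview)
Your proposal is correct and follows essentially the same strategy as the paper. The paper's proof of \Cref{theorem: log-lats are generic} packages your map $\Phi$ as the morphism $\mathfrak{Gr}:\mathcal{H}_F\to\mathrm{Sym}^G(R_F)$, proves your ``WSC input'' as \Cref{Lemma: generic unit} (the $n-1$ independent coordinates of $\Log(u)$ are algebraically independent), and then runs exactly your formal descent via the base-change square. The one point worth noting: the fact that $\overline{\Phi(\mathbb A^{n-1})}=\mathcal V_G$ (your second anticipated obstacle) is not taken from \cite{HHV} but is proved directly in the paper as \Cref{lemma:grammatrixMinkunit}, using the trace-form parametrization of $\mathrm{Sym}^G(R_F)$ (\Cref{lemma: traceforms}) together with a Cholesky-type decomposition in $R_F$ (\Cref{prop: Cholesky decomposition}) to show $\mathfrak{Gr}$ is surjective on $\overline F$-points.
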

Let us give the intuition on how we use \Cref{introtheorem: genericity} for the proofs of \Cref{introtheorem: linear independence} and \Cref{introtheorem: sameshape}. The idea is to convert a linear relation between the residues of distinct zeta functions, and the assumption of similarity of two log unit lattices coming from non-isomorphic number fields, into algebraic conditions satisfied by some Gram matrix of the log unit lattice of the Galois closure of the number fields involved. One then obtains that these algebraic conditions are satisfied by a generic point in the corresponding irreducible algebraic variety $\mathcal{V}_G$ and hence satisfied by any point in the variety, which is extremely useful to reach the desired conclusions.

 \subsection{Previous work}
 We now provide an overview of the existing literature related to the questions posed in the introduction.
\subsubsection{On number fields with the same regulator and arithmetic equivalence:}
 A natural source of number fields with the same regulator is the family of CM-fields. See \cite{iizuka2025infinite} for the construction of infinitely many pairs of CM quartic fields with the same discriminant, regulator, and class number. In fact, Greither and Zaimi proved in \cite{greither2017cm} that for any totally real number field $K$, there are only finitely many CM-extensions $L/K$ of the form $L=K(\epsilon)$ for $\epsilon$ a unit of $K$. From their result, it is not hard to deduce that the infinite family $\{L\ :\ L/K\text{ is a CM-extension} \}$ only has finitely many number fields with different regulators. 

 However, if one drops the CM condition, examples of number fields with the same regulator are now scarce. It seems to be the case that non-CM number fields with the same regulator have to be arithmetically equivalent; but, to the author's knowledge, there is no treatment of this in the literature. 
 
 On the other hand, there are two refinements of the condition of arithmetic equivalence that capture both the class number and the regulator. The first refinement, due to Prasad \cite{prasad2017refined}, is called \emph{integral equivalence}. It determines the Dedekind zeta function, the idéle class group, and, therefore, the regulator. The second refinement was introduced by Sutherland in \cite{Sutherland2021Stronger}; it is called \emph{locally integral equivalence}. Sutherland proved that two locally integrally equivalent number fields must have the same Dedekind zeta function, class number, and therefore regulator. He also proved that integral equivalence implies locally integral equivalence. It would be interesting for a future project to investigate if these refinements can determine the isometry/similarity class of the log unit lattice of a totally real number field.

\subsubsection{On the log unit lattice:}\label{subsubsection: log unit lattice}
There has been a recent surge of literature concerning the log-unit lattice (\cite{cusick1990units},\cite{kessler1991minimum},  \cite{david2018dirichlet}, \cite{TPS2021},\cite{cruz2020well}, \cite{HHV}, \cite{corso2025unboundednessshapesunitlattices} \cite{dang2025densityshapesperiodictori}, \cite{RNT}, \cite{BIRS-25frg504-2025}). Currently, the main problems that govern the subject are the following:
\begin{enumerate}
    \item Find log unit lattices with prescribed geometric properties, such as orthogonality and well-roundedness. 
    \item Understand how log unit lattices of a prescribed family of number fields distribute in the space of similarity classes of lattices of the appropriate rank.
\end{enumerate}
All the aforementioned references give results that elucidate parts of the above problems for certain families of number fields. We now highlight the results which inspired this work.
\begin{itemize}
    \item In \cite{HHV}, the authors show that, for a fixed prime number $p$, the similarity class of the log unit of a dihedral $D_p$-number field of signature $[1,(p-1)/2]$ is contained in a preperiodic orbit of a torus in the space of similarity classes of lattices of rank $(p-1)/2$. 
    \item In \cite{RNT} the authors prove that the similarity class of the log unit lattice of any $D_4$-quartic field of signature $[2,1]$ lies in the border of the moduli space $\mathcal{S}_2\subset \C$ that parametrizes the similarity classes of rank 2 lattices. They moreover prove that for every single one of these number fields, the corresponding complex number describing the similarity class of the log unit lattice is transcendental. 
    \item In the work in progress \cite{BIRS-25frg504-2025} the authors show that for totally imaginary $D_6$-sextic fields there is a dichotomy determined by the signature of the cubic subfield of the sextic. On one hand, if the cubic subfield is complex, then the similarity class of the log unit lattice of the sextic is represented by a trascendental number in the border of $\mathcal{S}_2$. On the other hand, if the cubic subfield is real, then the corresponding point in $\mathcal{S}_2$ is also trascendental but lies in the interior of $\mathcal{S}_2$. 
    
    The authors show that the dichotomy goes even further; it dictates when is the similarity class a complete invariant of the sextic. They prove there are infinitely many non-isomorphic totally imaginary $D_6$-sextic number fields with real cubic subfield with similar log unit lattice, while, on the other hand, if we restrict to the family of $D_6$-sextic number fields with signature $[0,3]$ admitting a complex cubic subfield, then the similarity class of the log unit lattice is a complete invariant: two number fields in this family are isomorphic if and only their log unit lattice is similar.
\end{itemize}
All of the above results start by describing where the similarity class of a log unit lattice of a member of the prescribed family of number fields lives. Then \cite{RNT} and \cite{BIRS-25frg504-2025} prove that the corresponding similarity class yields a transcendental point in the space. The methodology of \Cref{introtheorem: genericity} transports the same ideas to the case of real Galois number fields, and replaces the word transcendental with generic. 

\subsubsection{On the transcendental nature of special values of L-functions:}

The study of the transcendence of values of $L$-functions has a rich history that goes back to Euler. See \cite{baker1973problem}, \cite{gun2011transcendental}, \cite{gun2012linear} and \cite{LUTES20131000}. Perhaps the most reminiscent result to our conditional \Cref{introtheorem: linear independence}, is the following one by Baker \cite{baker1973problem}: For any set of even Dirichlet characters $\{\chi_i\}_{i=1}^n$, the set containing the corresponding $L$-functions evaluated at $s=1$:\, $\{L(1,\chi_i)\}_{i=1}^n$, is a $\overline{\Q}$-linearly independent set.

It is natural to ask if the same result holds for general Artin L-functions. In fact, In \cite[Theorem 5.3]{gun2011transcendental} Gun et al prove that for any Galois representation $\pi:\Gal(\overline{\Q}/\Q)\rightarrow \mathrm{GL}_n(\C)$ such that $\pi$ has no nontrivial irreducible constituents, the weak Schanuel conjecture and the Stark conjecture imply the transcendence of the number $L(1,\pi)$. Given that the Dedekind zeta function is the L-function of certain Galois representation $\rho_K:\Gal(\overline{\Q}/\Q)\rightarrow\mathrm{GL}_{[K:\Q]}(\C)$ (see \cite{MantillaSoler2019} for more details), \Cref{introtheorem: linear independence} gives more instances of L-functions for which an analogue of Baker's result holds.



\subsection{Outline of the paper:} We now give the structure of the paper. In \Cref{section: preliminaries} we give all the necessary definitions and auxiliary results needed for the proofs of our main results. We then proceed in \Cref{section: genericity} with the proof of the main technical tool of the paper: \Cref{introtheorem: genericity}. Afterwards, in \Cref{section: linear independence} we introduce some auxiliary lemmas from algebraic geometry and arithmetic equivalence, and then we prove \Cref{introtheorem: linear independence} and \Cref{introtheorem: number field with the same regulator}. Finally, in \Cref{section: isometry and similarity}, we prove \Cref{introcor: isometry} and prove \Cref{introtheorem: sameshape}.
\section{Acknowledgments}
The author of this work is grateful to David Karpuk for suggesting the idea of studying log unit lattices of number fields, to Fatemeh Jalalvand for computing the minimum of the lattices in \Cref{example: nonisometric} and to Guillermo Mantilla-Soler for suggesting \Cref{example: nonisometric}. The author is also grateful to Kristaps Balodis, Erik Holmes, Fatemeh Jalalvand, Enrique Nunez Lon-Wo, Renate Scheidler and Ha Tran for fruitful discussions related to this paper.

\section{Preliminaries}\label{section: preliminaries}
\subsection{Lattices}\label{subsection: lattices}
We now introduce some basic definitions that are central to lattice theory and are easily found in any introductory text on the subject, for example \cite{martinet2013perfect}.  We recall them here for the sake of completeness and presentation. From now on, $(V,\langle\cdot,\cdot\rangle)$ denotes a finite-dimensional real inner product space.

\begin{definition}
A subgroup $\Lambda\subset V$ is said to be a \emph{lattice} if it is the $\Z$-span of some linearly independent subset $B\subset V$. The rank of $\Lambda$ is its rank as an abelian group $rk_\Z(\Lambda)$; it coincides with the cardinality of any linearly independent set spanning $\Lambda$. We will say that $\Lambda\subset V$ is of full-rank, if $rk_\Z(\Lambda)=\dim(V)$, or equivalently if any basis of $\Lambda$ is an $\R$-basis for $V$.
\end{definition}

\begin{definition}
    Let $\Lambda$ be a lattice and let $B=\{v_i\}_{i}$ be a basis of $\Lambda$. The Gram matrix of $\Lambda$ associated to the basis $B$ is the matrix:
\[
\mathrm{Gr}_B:=(\langle v_i,v_j\rangle)_{i,j}.
\]
The determinant $\det(\Lambda)$ is defined as the determinant of any Gram matrix of $\Lambda$ and the covolume is defined as $\sqrt{\vol(\Lambda)}$.
\end{definition}
We now define the notions of isometry and similarity, which will be fundamental for what follows.
\begin{definition}
    Given two lattices $\Lambda_1$ and $\Lambda_2$, we will say that $\Lambda_1$ is \emph{isometric} to $\Lambda_2$ if there exist $B_1\subset \Lambda_1$ and $B_2\subset\Lambda_2$ bases of $\Lambda_1$ and $\Lambda_2$, respectively, such that
    \[
    \mathrm{Gr}_{B_1}=\mathrm{Gr}_{B_2}.
    \]
    We will say that $\Lambda_1$ is similar to $\Lambda_2$ if there exist $B_1\subset \Lambda_1$ and $B_2\subset\Lambda_2$ bases of $\Lambda_1$ and $\Lambda_2$, respectively, such that  
    \[
    \mathrm{Gr}_{B_1}=\lambda\mathrm{Gr}_{B_2},\quad\text{for some $\lambda\in\R$}.
    \]
    Similarity and isometry define equivalence relations. We will denote the similarity class of a lattice $\Lambda$ by $[\Lambda]$, and therefore $\Lambda_1$ and $\Lambda_2$ are similar if and only if $[\Lambda_1]=[\Lambda_2]$. 
\end{definition}

\subsection{Log-unit Lattices}\label{subsection: loglats}
Let $K$ be a number field of degree $[K:\Q]=r+2s$, where  $r$ and $s$ denote the number of real and conjugate pairs of complex embeddings of $K$, respectively. We denote the ring of integers of $K$ by $\E_K$ and its corresponding unit group by $\E_K^\times$. 

\begin{definition}
    Let $\mathcal{H}_\R^{r+s-1}\subset \R^{r+s}$ denote the hyperplane defined by the equation $\sum_{i=1}^{r+s}x_i=0$, where the $x_i$ denote the coordinates of $\R^{r+s}$. The log embedding of $K$ is the map $\Log: \E_K^\times\rightarrow \mathcal{H}_\R^{r+s-1}$ given by:
     \[
    \Log(u)=(
         \log|\sigma_1(u)|,\dots,\log|\sigma_r(u)|,2\log|\tau_1(u)|,\dots,2\log|\tau_s(u)|).
    ).
    \]
    The \emph{log-unit lattice} $\Lambda_K$ of $K$ is the image of the map $\Log$.
\end{definition} 
The classical proof by Minkowski of Dirichlet's unit Theorem, shows that $\Lambda_K$ is a full-rank lattice in $\mathcal{H}_\R^{r+s-1}$. Note that $\Lambda_K$ depends on the choice of ordering of the embeddings of $K$, but a different choice of ordering produces an isometric lattice, so we will ignore this dependence from now on.
\begin{definition}\label{definition: regulator}
    The regulator $\reg(K)$ of $K$ is defined to be the following quantity:
    \[
    \reg(K)=\sqrt{\frac{\det(\Lambda_K)}{r+s}}.
    \]
\end{definition}
We now finish this subsection with the following lemma which shows that, for an extension $L/K$ of totally real number number fields, the usual inclusion of $K\hookrightarrow L$ yields a sublattice of $\Lambda_L$ that is similar to $\Lambda_K$. 

\begin{lemma}\label{lemma: including a log unit lattice}
	Let $N/K$ be an extension of totally real number fields. Let $\iota$ be the unique map that makes the following diagram commute:
    \[\begin{tikzcd}
	{O_K^\times} & {O_N^\times} \\
	{\Lambda_K} & {\Lambda_N}
	\arrow[hook, from=1-1, to=1-2]
	\arrow["{\Log_K}"', from=1-1, to=2-1]
	\arrow["{\Log_N}", from=1-2, to=2-2]
	\arrow["\iota"', dashed, from=2-1, to=2-2]
\end{tikzcd}\]
Then $[\Lambda_K]=[\iota(\Lambda_K)]$. Moreover, 
\[
\frac{\det(\Lambda_K)}{\det(\iota(\Lambda_K))}\in\Q.
\]
\end{lemma}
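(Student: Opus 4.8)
The plan is to analyze the map $\iota$ explicitly in terms of the embeddings of $K$ and $N$. Let $[K:\Q]=m$ and $[N:\Q]=n$; since both fields are totally real, $K$ has $m$ real embeddings $\sigma_1,\dots,\sigma_m$ and $N$ has $n$ real embeddings $\rho_1,\dots,\rho_n$. Each $\rho_j$ restricts to some $\sigma_{i(j)}$ on $K$, and by standard extension theory every $\sigma_i$ is the restriction of exactly $n/m=[N:K]=:d$ of the $\rho_j$. So the fibers of $j\mapsto i(j)$ all have the same size $d$. For $u\in\E_K^\times\subset\E_N^\times$ we have $\log|\rho_j(u)| = \log|\sigma_{i(j)}(u)|$, hence at the level of the ambient hyperplanes, $\iota$ is (the restriction of) the linear map $T:\mathcal H_\R^{m-1}\to\mathcal H_\R^{n-1}$ that sends the $i$-th coordinate direction to the sum of the $d$ coordinate directions indexed by $\{j : i(j)=i\}$. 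The first thing I would check is that $T$ indeed lands in $\mathcal H_\R^{n-1}$ (the coordinate sum is preserved, possibly up to the factor $d$) and that $T$ is injective, so that $\iota(\Lambda_K)$ is genuinely a full-rank sublattice of a $d$-fold "diagonal" copy of $\mathcal H_\R^{m-1}$ inside $\mathcal H_\R^{n-1}$; injectivity follows because the fibers are disjoint and nonempty.

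The key computation is then the Gram matrix comparison. Fix a basis $v_1,\dots,v_{m-1}$ of $\Lambda_K$ (coming from a fundamental system of units of $K$). The inner product on $\mathcal H_\R^{n-1}$ is just the restriction of the standard one on $\R^n$, so for the images $Tv_k$ we get
\[
\langle Tv_k, Tv_\ell\rangle_{\R^n} \;=\; \sum_{j=1}^{n}\bigl(Tv_k\bigr)_j\bigl(Tv_\ell\bigr)_j \;=\; \sum_{i=1}^{m} d\,(v_k)_i (v_\ell)_i \;=\; d\,\langle v_k,v_\ell\rangle_{\R^m},
\]
because within each fiber the coordinates of $Tv_k$ and $Tv_\ell$ are constant, equal to $(v_k)_i$ and $(v_\ell)_i$ respectively. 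Thus $\mathrm{Gr}_{\{Tv_k\}} = d\cdot \mathrm{Gr}_{\{v_k\}}$, which says precisely that $\iota(\Lambda_K)$ is similar to $\Lambda_K$ with similarity factor $d=[N:K]\in\Q$; this gives $[\Lambda_K]=[\iota(\Lambda_K)]$. Taking determinants of these $(m-1)\times(m-1)$ matrices yields $\det(\iota(\Lambda_K)) = d^{\,m-1}\det(\Lambda_K)$, so
\[
\frac{\det(\Lambda_K)}{\det(\iota(\Lambda_K))} \;=\; \frac{1}{d^{\,m-1}} \;=\; [N:K]^{-(m-1)} \in \Q,
\]
as claimed.

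There is essentially no hard obstacle here; the proof is bookkeeping about how real embeddings of $N$ restrict to those of $K$. The one point that needs a small amount of care — and the only place a subtlety could hide — is confirming that $\iota(\Lambda_K)$ is being compared as a lattice in the same inner product space $\mathcal H_\R^{n-1}$, i.e. that the similarity constant comes purely from the coordinate-multiplicity factor $d$ and not from any rescaling hidden in the definition of $\Log_N$ versus $\Log_K$ (both use the same unnormalized convention $\log|\cdot|$ since everything is real, so no factor of $2$ from complex places intervenes). Once the multiplicity count "each real embedding of $K$ has exactly $[N:K]$ extensions to $N$" is in place, everything else is the displayed one-line computation. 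I would write the argument in exactly this order: set up the embeddings and the combinatorics of restriction, describe $T$ and check it is a well-defined injection into $\mathcal H_\R^{n-1}$, do the Gram matrix computation, and read off both conclusions.
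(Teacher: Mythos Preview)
Your proposal is correct and follows essentially the same approach as the paper: both arguments compute that $\langle \iota(v),\iota(w)\rangle = [N:K]\,\langle v,w\rangle$ by counting how many real embeddings of $N$ lie over each real embedding of $K$, and then read off similarity and the determinant ratio. Your version is in fact more explicit and gets the scaling constant right, whereas the paper's displayed formula has a typo (it writes $\sqrt{[N:K]}$ where it should have $[N:K]$, though the subsequent sentence is consistent with the correct factor).
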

\begin{proof}
    Note that for any $v,w\in\Lambda_K$:
    \[
    \langle \iota(v),\iota(w)\rangle=\sqrt{[N:K]}\langle v,w\rangle.
    \]
    It follows that the lattices $\sqrt{[N:K]}\Lambda_K$ and $\iota(\Lambda_K)$ are isometric. The result easily follows from this observation.
\end{proof}
\subsection{The ring $R[G]$}\label{subsection: the ring R}
In this subsection $G$ denotes a finite group and $F$ denotes a field of characteristic 0. The integral group ring of $G$ is denoted by $\Z[G]$.
\begin{definition}
    Let $N_G=\sum_{g\in G}g\in \Z[G]$. We define the ring $R[G]$ to be the quotient ring $R[G]:=\Z[G]/(N_G)$, where $(N_G)$ is the two-sided ideal generated by $N_G$. If the group $G$ is clear from the context, we will omit $G$ and write $R$ instead of $R[G]$. We write $R_F[G]$ for the extension of scalars of $R$ from $\Z$ to $F$, so that $R_F[G]\cong F[G]/(N_G)$. Just as before, if $G$ is clear from the context, we will omit $G$ and write $R_F$ instead of $R_F[G]$.
\end{definition}
Note that for a Galois number field $L$ with Galois group $G=\Gal(L/\Q)$, the group $G$ acts orthogonally on the log unit lattice $\Lambda_L$ of $L$, since it acts by permuting the coordinates of $\Lambda_L$ inside $\mathcal{H}_\R^{r+s-1}$.
Furthermore,  $\Lambda_L$ has the structure of an $R[G]$-module and we have the following theorem.
\begin{theorem}[See Theorem 3.26 of \cite{ElementaryandAnalyticofANT}]\label{Theorem: existence of weak Minkowski unit}
    Let $L/\Q$ be a Galois number field of degree $[L:\Q]=r+2s$, where $r$ and $s$ are the number of real and complex embeddings of $L$, respectively, and let $G:=\Gal(L/\Q)$ be its Galois group. There exists a weak Minkowski unit $u\in \E_L^\times$, that is, a unit such that the set $\{u^\tau\}_{\tau\in G}$ contains $r+s-1$ independent units. 
\end{theorem}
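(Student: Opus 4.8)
The plan is to reduce the statement to a purely representation-theoretic fact about the rational group algebra $\Q[G]$. First I would reformulate what a weak Minkowski unit is. Since $\mathrm{rank}_\Z\,\E_L^\times=r+s-1=\dim_\R\mathcal H_\R^{r+s-1}$, a unit $u$ has the property that $\{u^\tau\}_{\tau\in G}$ contains $r+s-1$ independent units if and only if the vectors $\{\tau\cdot\Log(u)\}_{\tau\in G}=\{\Log(u^\tau)\}_{\tau\in G}$ span $\Lambda_L\otimes_\Z\Q$ over $\Q$, i.e. if and only if $\Log(u)$ generates $\Lambda_L\otimes_\Z\Q$ as a module over the group algebra $\Q[G]$ (equivalently over $R_\Q[G]=\Q[G]/(N_G)$, which acts because $N_G$ annihilates $\Lambda_L$). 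Conversely, given any $\Q[G]$-generator $v$ of $\Lambda_L\otimes_\Z\Q$, an integer multiple of $v$ lies in $\Lambda_L$ and is still a $\Q[G]$-generator, hence equals $\Log(u)$ for some $u\in\E_L^\times$, and that $u$ is then a weak Minkowski unit (it also generates a finite-index subgroup of $\E_L^\times$, since finite index is equivalent to full rank $r+s-1$). So it suffices to prove that $\Lambda_L\otimes_\Z\Q$ is a \emph{cyclic} $\Q[G]$-module.

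Next I would identify the $\R[G]$-module $\Lambda_L\otimes_\Z\R$. By Dirichlet's unit theorem $\Log$ induces a $G$-equivariant isomorphism $\Lambda_L\otimes_\Z\R\cong\mathcal H_\R^{r+s-1}$, the sum-zero hyperplane in $\R^{r+s}$, where $G$ acts on both sides by permuting the archimedean places. Since $L/\Q$ is Galois, $G$ acts transitively on the $r+s$ archimedean places with stabilizer the decomposition group $H$; for a Galois extension of $\Q$ the only possibilities are $L$ totally real with $H$ trivial, or $L$ totally imaginary with $H=\langle c\rangle$ of order $2$ generated by complex conjugation. Hence $\R^{r+s}\cong\R[G/H]=\mathrm{Ind}_H^G\mathbf 1$ as $\R[G]$-modules, and because $\R[G]$ is semisimple the $G$-equivariant sum map $\R[G/H]\to\R$ splits, giving $\mathcal H_\R^{r+s-1}\cong\R[G/H]/\mathbf 1$, where $\mathbf 1\hookrightarrow\R[G/H]$ is the line spanned by the $G$-invariant vector $\sum_{gH}gH$.

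The module $\Q[G/H]/\mathbf 1$ (with the evident $\Q$-rational embedding $\mathbf 1\hookrightarrow\Q[G/H]$) is manifestly a cyclic $\Q[G]$-module: it is a quotient of $\Q[G/H]$, which is itself the cyclic module $\Q[G]\cdot(eH)$, a quotient of $\Q[G]$. Moreover $(\Q[G/H]/\mathbf 1)\otimes_\Q\R\cong\R[G/H]/\mathbf 1\cong\Lambda_L\otimes_\Z\R\cong(\Lambda_L\otimes_\Z\Q)\otimes_\Q\R$ as $\R[G]$-modules. Now I would invoke the Noether--Deuring theorem: two finitely generated $\Q[G]$-modules that become isomorphic after extension of scalars to $\R$ are already isomorphic over $\Q$ (which here is immediate from semisimplicity of $\Q[G]$ and comparison of the multiplicities of the simple constituents, noting that distinct $\Q[G]$-simples have disjoint sets of $\R[G]$-constituents). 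Therefore $\Lambda_L\otimes_\Z\Q\cong\Q[G/H]/\mathbf 1$ as $\Q[G]$-modules, so it is cyclic, and by the first paragraph a weak Minkowski unit exists.

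The only genuinely delicate point is the descent from $\R$ to $\Q$ in the last step: Dirichlet's theorem only controls $\Lambda_L$ up to $\R[G]$-isomorphism, and it is the rationality of the group algebra---semisimplicity of $\Q[G]$, equivalently Noether--Deuring---that upgrades ``$\Lambda_L\otimes_\Z\R$ is cyclic'' to ``$\Lambda_L\otimes_\Z\Q$ is cyclic''; for a general $\Z[G]$-lattice with the correct real type the conclusion could genuinely fail at the level of $\Q[G]$. The remaining verifications---transitivity of the $G$-action on infinite places, which uses normality of $L/\Q$; the determination of $H$ in the two cases; and the equivalence between the finite-index formulation of the footnote and the ``$r+s-1$ independent conjugates'' formulation---are routine.
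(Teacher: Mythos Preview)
The paper does not prove this statement; it is quoted with a citation to an external reference and used as a black box thereafter. Your argument is correct and is one of the standard proofs: reduce to showing that $\Lambda_L\otimes_\Z\Q$ is a cyclic $\Q[G]$-module, identify $\Lambda_L\otimes_\Z\R$ with the augmentation kernel of the permutation module $\R[G/H]$ on the archimedean places, and descend from $\R$ to $\Q$ via Noether--Deuring (equivalently, semisimplicity of $\Q[G]$). There is thus no paper proof to compare against; I only note that your route makes the isomorphism $\Q\otimes_\Z\Lambda_L\cong R_\Q[G]$ in the totally real case $H=1$ immediate, which is exactly what the paper records next as \Cref{cor: module structure of log unit lattice}.
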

From the above theorem, we get that  $\Q\otimes_\Z\Lambda_L$ is a left cyclic $R_\Q$-module, and when $L/\Q$ is real we get the following result:

\begin{corollary}\label{cor: module structure of log unit lattice}
    Let $R_\Q=\Q[G]/(N_G)$ and let $L/\Q$ be a real Galois number field with Galois group $G:=\Gal(L/\Q)$ and let $\Lambda_L$ be its log-unit lattice. Then
    \begin{enumerate}
        \item[(i)] $\Lambda_L$ is a $G$-module, and $G$ acts on $\Lambda_L$ by automorphisms of $\Lambda_L$
        \item[(ii)] If $u\in \E_L^\times$ is a weak Minkowski unit and $v=\Log_L(u)$, then the $R_{\Q}$-linear map $\phi_v:R_\Q\rightarrow \Q\otimes_\Z\Lambda_L$ determined by $\phi(1)=v$ is an isomorphism of left $R_\Q$-modules.
    \end{enumerate}
\end{corollary}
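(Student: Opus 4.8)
The plan is to deduce both statements from the Galois-equivariance of the log embedding together with Dirichlet's unit theorem and \Cref{Theorem: existence of weak Minkowski unit}. Write $n=[L:\Q]$; since $L$ is totally real we have $r=n$, $s=0$, so the target $\mathcal H:=\mathcal H_\R^{r+s-1}\subset\R^n$ is the sum-zero hyperplane and $G$ acts on $\R^n$ by permuting coordinates. The first step is to make this action transparent: identifying the embeddings of $L$ with $G$ via $g\mapsto\sigma_1\circ g$ (the embeddings of a Galois field form a single $G$-torsor), the coordinate-permutation action of $G$ on $\R^n$ is identified with the regular representation of $G$ on $\R[G]$, and $\mathcal H$ is exactly the $G$-submodule on which $N_G=\sum_{g\in G}g$ acts as $0$. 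With this identification one checks directly that $\Log$ is $G$-equivariant, $\Log_L(\sigma(u))=\sigma\cdot\Log_L(u)$ for all $\sigma\in G$ and $u\in\E_L^\times$, since applying $\sigma$ to a unit permutes its archimedean absolute values by the same permutation by which $\sigma$ acts on $\R^n$.

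Statement (i) is then immediate: by the displayed equivariance $\Lambda_L=\Log_L(\E_L^\times)$ is stable under the $G$-action on $\R^n$, so it is a $\Z[G]$-module and $G$ acts by lattice isometries (the action being by coordinate permutations); and because every vector of $\Lambda_L\subset\mathcal H$ has coordinate sum $0$, the element $N_G$ acts as $0$, so this module structure descends to $R=\Z[G]/(N_G)$, and $\Q\otimes_\Z\Lambda_L$ becomes an $R_\Q$-module.

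For (ii), first observe that $\phi_v$ is well defined and $R_\Q$-linear: $R_\Q$ is free of rank one over itself, so there is a unique $R_\Q$-linear map with $\phi_v(1)=v$ provided $N_G\cdot v=0$ in $\Q\otimes_\Z\Lambda_L$, and this holds because $v=\Log_L(u)$ lies in $\mathcal H_\Q$; concretely $\phi_v(x)=x\cdot v$. To see $\phi_v$ is an isomorphism I would argue by a dimension count plus a spanning statement. We have $\dim_\Q R_\Q=|G|-1=n-1$, since $(N_G)=\Q[G]\,N_G=\Q\,N_G$ is one-dimensional (using $gN_G=N_G$ for every $g$), and $\dim_\Q(\Q\otimes_\Z\Lambda_L)=\rank_\Z\Lambda_L=r+s-1=n-1$ by Dirichlet's unit theorem; hence $\phi_v$ is injective if and only if surjective, and it suffices to show that $\{g\cdot v:g\in G\}$ spans $\Q\otimes_\Z\Lambda_L$ over $\Q$. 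By equivariance $g\cdot v=\Log_L(u^{\,g})$, and since $u$ is a weak Minkowski unit, \Cref{Theorem: existence of weak Minkowski unit} gives that $\{u^{\,\tau}\}_{\tau\in G}$ contains $r+s-1=n-1$ multiplicatively independent units; as the only torsion units of a totally real field are $\pm1$, the map $\Log_L$ is injective modulo torsion, so the corresponding $n-1$ vectors $g\cdot v$ are $\R$-linearly independent, hence $\Q$-linearly independent. Being $n-1$ independent vectors in the $(n-1)$-dimensional space $\Q\otimes_\Z\Lambda_L$, they span it, so $\phi_v$ is surjective and therefore an isomorphism of left $R_\Q$-modules.

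I do not expect a real obstacle: this is essentially a repackaging of \Cref{Theorem: existence of weak Minkowski unit} in module-theoretic language. The only points needing care are (a) pinning down the identification of $\R^n$ with the regular representation so that the $G$-equivariance of $\Log_L$ is manifest, which rests on the embeddings of a Galois field forming a single $G$-torsor, and (b) noting that ``contains $n-1$ independent units'' is literally the assertion that the associated log vectors are $\R$-independent, which upgrades for free to $\Q$-independence and then yields spanning via the dimension count.
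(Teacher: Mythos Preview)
Your proof is correct and follows essentially the same approach as the paper's: both argue that $\phi_v$ is a surjective linear map between $\Q$-vector spaces of the same dimension $n-1$, the surjectivity coming from \Cref{Theorem: existence of weak Minkowski unit}. The paper's proof is a two-line sketch (``surjective map of vector spaces of the same dimension''), while you supply the details---the $G$-equivariance of $\Log_L$, why $N_G$ annihilates $\Lambda_L$, and the explicit dimension count---and you also give an argument for part (i), which the paper treats as already established in the paragraph preceding the corollary.
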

\begin{proof}
    By \Cref{Theorem: existence of weak Minkowski unit}, the map $\phi_v$ is a surjective map of vector spaces of the same dimension. The result follows.
\end{proof}
Let $L/K$ be an extension of totally real number fields. From now on we will view $\Lambda_K$ as a sublattice of $\Lambda_L$ via \Cref{lemma: including a log unit lattice}. With this in mind, we can state the following.
\begin{proposition}\label{prop: norm of weak Mink unit}
Let $L/\Q$ be a Galois number field with Galois group $G:=\Gal(L/\Q)$ and log unit lattice $\Lambda_L$. Fix $v=\Log(u)$ for some weak Minkowski unit $u\in \E_L^\times$. Let $K\subset L$ be a subfield, let $\Lambda_K$ be the log unit lattice of $K$ and let $H:=\Gal(L/K)$. Then
\begin{enumerate}
    \item[(i)] if $\phi_v:R_\Q[G]\rightarrow \Q\otimes_\Z\Lambda_L$ denotes the isomorphism in \Cref{cor: module structure of log unit lattice}, then the restriction $\phi_v\restriction_{R_\Q[G]}$ yields an isomorphism of $\Q$-vector spaces between $\Q\otimes\Lambda_K$ and the right-ideal $N_HR_\Q[G]$. It follows that if $B$ is a $\Q$-basis of $N_HR_\Q[G]$, then the set $\{\alpha v\}_{\alpha\in B}$ is a $\Q$-basis for $\Q\otimes \Lambda_K$.
    \item[(ii)] If $K/\Q$ is Galois, then $N_H$ is a central element in $R[G]$ and $N_HR[G]\cong R[G/H]$ as $R[G]$-modules. Furthermore, the unit $u^{N_H}=N_{L/K}(u)$ is a weak Minkowski unit of $K$.
\end{enumerate}
    \begin{proof}
	Note that the isotypic component of the group algebra $\Q[H]$ corresponding to the trivial representation of $H$, is the ideal generated by the central idempotent $N_H/\#H$. We conclude that when we view $R_\Q[G]$ and $\Q\otimes_\Z\Lambda_L$ as left $\Q[H]$-modules we must have:
	\[
	R_\Q[G]^{H}=N_H R_\Q[G]\quad \text{and}\quad (\Q\otimes\Lambda_L)^H=N_H(\Q\otimes_Z \Lambda_L).
	\]
    Clearly $N_H\Lambda_L\subset \Lambda_K$ and $\Lambda_K\subset \Lambda_L^H$, hence $\Q\otimes_\Z\Lambda_K=(\Q\otimes_\Z\Lambda_L)^H$. Since $\phi_v$ maps $N_HR_{\Q}[G]$ isomorphically to $N_{H}(\Q\otimes_\Z \Lambda_L)$, the result follows.
\end{proof}
\end{proposition}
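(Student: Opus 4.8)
The plan is to build everything around the relative norm element $N_H=\sum_{h\in H}h\in\Z[G]$ and, over $\Q$, the associated idempotent $e_H:=N_H/\#H\in\Q[H]\subseteq\Q[G]$. I would prove part (i) by identifying the $H$-fixed subspaces on both sides and invoking that $\phi_v$ is a morphism of left $R_\Q[G]$-modules, hence of left $\Q[H]$-modules; and part (ii) by exploiting the normality of $H$ in $G$, an explicit $\Z$-basis of $N_H\Z[G]$, and the surjectivity of $\phi_v$. Throughout I use that $L/\Q$ is real (so \Cref{cor: module structure of log unit lattice} supplies the isomorphism $\phi_v$), that $G$ acts on $\Lambda_L$ by permuting coordinates because $L/\Q$ is Galois, and that $\Lambda_K$ is viewed inside $\Lambda_L$ via \Cref{lemma: including a log unit lattice}, the rescaling there being harmless since it changes neither fixed subspaces nor $\Q$-spans.

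For (i), I would first record the inclusions $N_H\Lambda_L\subseteq\Lambda_K$ and $\Lambda_K\subseteq\Lambda_L^{H}$ inside $\Lambda_L$: the first holds because $N_H\cdot\Log(\epsilon)=\Log\!\left(\prod_{h\in H}\epsilon^{h}\right)=\Log(N_{L/K}(\epsilon))$ and $N_{L/K}(\epsilon)\in\E_K^\times$, and the second because units of $K$ are $H$-fixed and $\Log$ is $G$-equivariant. Tensoring with $\Q$ and squeezing with $e_H$ gives $N_H(\Q\otimes\Lambda_L)=e_H(\Q\otimes\Lambda_L)\subseteq\Q\otimes\Lambda_K\subseteq(\Q\otimes\Lambda_L)^{H}=e_H(\Q\otimes\Lambda_L)$, so these coincide; the same squeeze, applied to $R_\Q[G]$ in place of $\Q\otimes\Lambda_L$, gives $(R_\Q[G])^{H}=N_HR_\Q[G]$. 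Since $\phi_v$ is $\Q[H]$-linear it restricts to an isomorphism between these two fixed spaces, which is the assertion, and the basis statement follows from $\phi_v(\alpha)=\phi_v(\alpha\cdot 1)=\alpha v$.

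For (ii), normality of $H$ gives $gN_Hg^{-1}=\sum_{h\in H}ghg^{-1}=N_H$ for every $g\in G$, so $N_H$ is central in $\Z[G]$ and hence in $R[G]$. I would then identify $N_HR[G]$ explicitly: choosing coset representatives $g_1,\dots,g_m$ for $G/H$, the elements $N_Hg_i$ have pairwise disjoint supports and form a $\Z$-basis of $N_H\Z[G]$, and $g\mapsto gH$ induces a left $\Z[G]$-module isomorphism $N_H\Z[G]\xrightarrow{\ \sim\ }\Z[G/H]$ sending $N_Hg_i\mapsto g_iH$ — its $G$-equivariance being exactly the centrality of $N_H$ — under which $N_G=N_H\sum_i g_i$ maps to $\sum_i g_iH=N_{G/H}$; passing to the quotients $R[G]=\Z[G]/(N_G)$ and $R[G/H]=\Z[G/H]/(N_{G/H})$ then yields $N_HR[G]\cong R[G/H]$ as $R[G]$-modules. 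Finally $u^{N_H}=\prod_{h\in H}u^{h}=N_{L/K}(u)$ by definition of the norm, and since $\Log(N_{L/K}(u))=N_Hv$ while $\phi_v$ is onto and $N_H$ central, $R_\Q[G]\cdot(N_Hv)=N_HR_\Q[G]\cdot v=N_H(\Q\otimes\Lambda_L)=\Q\otimes\Lambda_K$, so the $\Gal(K/\Q)$-conjugates of $N_{L/K}(u)$ span $\Q\otimes\Lambda_K$ and therefore contain $\rank\Lambda_K$ independent units, which is the weak Minkowski property in the sense of \Cref{Theorem: existence of weak Minkowski unit}.

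The one step that is more than bookkeeping is the integral isomorphism $N_HR[G]\cong R[G/H]$ in (ii): over $\Z$ the element $N_H$ is not idempotent, so the clean $e_H$-argument used over $\Q$ is unavailable, and one must instead work with the explicit $\Z$-basis $\{N_Hg_i\}$ and check that the only surviving integral relation is precisely $N_G\mapsto N_{G/H}$. I would also flag the degenerate case $K=\Q$ (there $H=G$, $N_H=N_G=0$ in $R$, $\Lambda_K=0$ and $R[G/H]=0$, so every assertion is vacuous), and confirm that all the identifications above are compatible with the rescaling in \Cref{lemma: including a log unit lattice}.
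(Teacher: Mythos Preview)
Your argument for part (i) is correct and essentially identical to the paper's: both proofs identify $\Q\otimes\Lambda_K$ and $N_HR_\Q[G]$ as the $H$-fixed subspaces (equivalently, the images of the idempotent $e_H=N_H/\#H$) on each side, using the sandwich $N_H\Lambda_L\subseteq\Lambda_K\subseteq\Lambda_L^{H}$, and then transport one to the other via the $\Q[H]$-linearity of $\phi_v$. For part (ii) the paper gives no argument at all, so your explicit treatment---centrality from normality, the coset-representative basis $\{N_Hg_i\}$ matching $N_G$ with $N_{G/H}$, and the surjectivity $R_\Q[G]\cdot(N_Hv)=N_H(\Q\otimes\Lambda_L)=\Q\otimes\Lambda_K$ for the weak Minkowski claim---is a welcome completion rather than a different route.
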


\subsection{$G$-invariant symmetric bilinear forms on the ring $R[G]$ }\label{subsection: symmetricbil}
Let $F$ be a field of characteristic $0$. We now establish some facts about $G$-invariant symmetric bilinear forms on the semisimple algebra $R_F:=F[G]/(N_G)$.
\begin{definition}
Let $V_F$ a finite dimensional $F$-vector space and $\rho:G\rightarrow \mathrm{GL}(V)$ be a representation.
We say that a symmetric $F$-bilinear form $\mathfrak{b}:V\times V\rightarrow F$ is $G$-invariant if 
\[
\mathfrak{b}(\rho(g)\alpha,\rho(g)\beta)=b(\alpha,\beta),\quad\forall \alpha,\beta\in V_F.
\]
We define $\mathrm{Sym}^G(V_F)$ to be the set of symmetric $G$-invariant $F$-bilinear forms on $V$.
\end{definition}
Note that $\mathrm{Sym}^G(V_F)$ is an $F$-vector space, and it is also an irreducible affine algebraic variety over $F$; it is the zero locus of the linear equations defined by the `$G$-invariance' condition on the space of symmetric matrices with coefficients in $F$. The following lemma relates the isotypic decomposition of a representation with the structure of the space of $G$-invariant bilinear forms on it.
\begin{lemma}\label{lemma: isotypic comp are orthogonal}
     Let $V_F$ be a finite dimensional representation of $G$.
     Let $e\in \Q[G]$ be a central idempotent.
     For any symmetric $G$-invariant bilinear form $\mathfrak b\in \mathrm{Sym}^G(V_F)$, the vector spaces $eV_F$ and $(1-e)V_F$ are orthogonal with respect to $\mathfrak b$. Moreover, the choice of a basis for $eV_F$ and $(1-e)V_F$ produces an isomorphism of algebraic varieties: \[ 
    \mathrm{Sym}^G(V_{F})\cong \mathrm{Sym}^G(eV_F)\times\mathrm{Sym}^G((1-e)V_F).
    \]
\end{lemma}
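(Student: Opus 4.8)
The plan is to exploit the fact that $G$-invariance forces orthogonality between non-isomorphic isotypic pieces, and then to repackage this into an isomorphism of affine varieties. First I would establish the orthogonality claim. Let $e\in\Q[G]$ be a central idempotent; since $G$ acts on $V_F$ through a representation $\rho$, the element $e$ acts as an idempotent $F$-linear endomorphism of $V_F$, giving the decomposition $V_F=eV_F\oplus(1-e)V_F$. Pick $\alpha\in eV_F$ and $\beta\in(1-e)V_F$. The key computation is that for any $\mathfrak b\in\mathrm{Sym}^G(V_F)$, $G$-invariance extends by $F$-linearity to $\Q[G]$-invariance in the sense $\mathfrak b(g\alpha,\beta)=\mathfrak b(\alpha,g^{-1}\beta)$, hence $\mathfrak b(x\alpha,\beta)=\mathfrak b(\alpha,x^{*}\beta)$ where $x\mapsto x^{*}$ is the $F$-linear anti-involution of $\Q[G]$ sending $g$ to $g^{-1}$. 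Because $e$ is a central idempotent, $e^{*}$ is again a central idempotent, and centrality together with the group-ring structure forces $e^{*}=e$ (the standard fact that central idempotents of a group algebra over a field of characteristic $0$ are fixed by $g\mapsto g^{-1}$, since they are sums of the primitive central idempotents $\tfrac{\dim\chi}{\#G}\sum_g\chi(g^{-1})g$ attached to irreducible characters $\chi$, and $\overline{\chi(g^{-1})}=\chi(g)$ for such real considerations — more simply, $e$ acts as the identity on its isotypic components and as zero elsewhere, a property visibly preserved by the anti-involution). Then $\mathfrak b(\alpha,\beta)=\mathfrak b(e\alpha,\beta)=\mathfrak b(\alpha,e^{*}\beta)=\mathfrak b(\alpha,e\beta)=\mathfrak b(\alpha,0)=0$, since $\beta\in(1-e)V_F$ means $e\beta=0$. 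This gives the orthogonality.

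Next I would use the orthogonality to split the variety. Fix an $F$-basis $B_1$ of $eV_F$ and $B_2$ of $(1-e)V_F$, so $B_1\cup B_2$ is a basis of $V_F$. In these coordinates, any $\mathfrak b\in\mathrm{Sym}^G(V_F)$ has Gram matrix that, by the orthogonality just proved, is block-diagonal with blocks indexed by $B_1$ and $B_2$. Moreover the $G$-action preserves each summand (since $e$ is central, both $eV_F$ and $(1-e)V_F$ are $G$-subrepresentations), so the restriction $\mathfrak b|_{eV_F}$ is a $G$-invariant symmetric form on $eV_F$, i.e. an element of $\mathrm{Sym}^G(eV_F)$, and likewise for $(1-e)V_F$. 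This gives a map $\mathrm{Sym}^G(V_F)\to\mathrm{Sym}^G(eV_F)\times\mathrm{Sym}^G((1-e)V_F)$, sending $\mathfrak b$ to its pair of restrictions, and it is an isomorphism of $F$-vector spaces with inverse given by forming the orthogonal direct sum (block-diagonal) form from a pair. Since all these maps are given by linear formulas in the matrix entries — restriction simply reads off a sub-block, and the inverse simply places two blocks on the diagonal and zeros elsewhere — they are morphisms of affine varieties, and being mutually inverse linear isomorphisms they are isomorphisms of algebraic varieties. This completes the proof.

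The only real subtlety — and the step I would be most careful about — is verifying that $e^{*}=e$, equivalently that the decomposition $V_F=eV_F\oplus(1-e)V_F$ is orthogonal rather than merely a direct-sum decomposition; without centrality of $e$ this can fail. I would phrase it intrinsically: $e$ central in $\Q[G]$ implies $e$ lies in the image of $Z(\Q[G])\to Z(F[G])$, which is a product of fields, and each primitive central idempotent $e_\chi=\tfrac{\chi(1)}{\#G}\sum_{g}\chi(g^{-1})g$ satisfies $e_\chi^{*}=\tfrac{\chi(1)}{\#G}\sum_g\chi(g)g=\tfrac{\chi(1)}{\#G}\sum_g\chi((g^{-1})^{-1})g=e_{\bar\chi}$, but since $e$ is $\Q$-rational it is a sum of a $\Gal(\overline\Q/\Q)$-stable set of $e_\chi$'s and the anti-involution permutes these within the same orbit pattern; concretely $e$ is a sum of primitive idempotents and $*$ fixes the set, hence fixes $e$. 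Everything after that point is bookkeeping with block matrices and is routine. One remark worth including is that $\mathrm{Sym}^G(V_F)$ being defined as the solution set of the linear $G$-invariance conditions inside the affine space of symmetric matrices makes the claim "isomorphism of algebraic varieties" unambiguous, and the product on the right is just the product of two such linear subspaces, so dimensions add and the displayed isomorphism is consistent with the vector-space isomorphism.
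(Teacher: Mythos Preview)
Your proof is correct and follows essentially the same approach as the paper: both arguments use $G$-invariance to pass the action across the bilinear form, invoke $\overline{e}=e$ for a central idempotent $e\in\Q[G]$, and conclude orthogonality via $e(1-e)=0$, then deduce the variety isomorphism from the resulting block-diagonal decomposition. The paper simply asserts $\overline{e}=e$ without justification, whereas you (eventually, in your final paragraph) give the correct argument via Galois-stability of the set of primitive central idempotents; your earlier parenthetical attempts at this point are a bit muddled, so I would keep only the clean version.
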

\begin{proof}
    Since $e\in \Q[G]$, we have $\overline{e}=e$, where
     \[
     \overline{(\cdot)}:\Q[G]\rightarrow \Q[G]
     \]
     is the $\Q$-linear map determined by $\overline{g}=g^{-1}$.
    Note that for any $b\in\mathrm{Sym}^G(R_F)$, the group $G$ acts orthogonally (with respect to $b$) on $R_F$. Hence, for any $\alpha,\beta\in V_F$, we get: \[
    \mathfrak b( e\alpha,(1-e)\beta)=\mathfrak{b}( \alpha, \overline{e}(1-e)\beta)=\mathfrak b(\alpha, e(1-e)\beta\rangle =0.
    \]
    This proves that $eV_F$ and $(1-e)V_F$ are orthogonal with respect to $\mathfrak b$. The rest of the proof follows easily from the decomposition of $R_F$-modules:
    \[
    V_F\cong eV_F\oplus (1-e)V_F.
    \]
 
\end{proof}    
From now on, we restrict ourselves to the case $V_F=R_F$. The algebra $R_F$ has an involution $\overline{(\cdot)}:R_F\rightarrow R_F^{opp}$ induced by the map $G\rightarrow G$, $g\mapsto g^{-1}$; we denote by $A_F$ the $F$-vector subspace invariant under $\overline{(\cdot)}$, so
\[
A_F:=\{\eta\in R_F:\overline{\eta}=\eta\}.
\]
Note that we can view $A_F$ a closed affine irreducible subvariety of $R_F$.
\begin{lemma}[{\cite[Proposition 12]{bayer2002ideal}}]\label{lemma: traceforms}
    Let $F$ be a field of characteristic 0. For $\alpha\in A_F$, let $\mathrm{Tr}_\alpha:R_F\times R_F\rightarrow F$ be the bilinear form given by $\mathrm{Tr}_\alpha(x,y)=\mathrm{trace}_F(x\alpha\overline{y})$, where we identify $x\alpha\overline{y}$ with the linear map given by multiplication on the left by $x\alpha\overline{y}$. Then the map
    \[
    A_F\rightarrow \mathrm{Sym}_F(R_F),\quad \eta\mapsto\left\{(\alpha,\beta)\mapsto\mathrm{Tr}\left(\alpha\eta\overline{\beta
    }\right)\right\}
    \]
    is an isomorphism of $F$-vector spaces and also an isomorphism of $F$-varieties.
\end{lemma}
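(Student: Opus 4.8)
The plan is to verify that the map in the statement, which I denote $\Phi\colon A_F\to\mathrm{Sym}^G(R_F)$, $\eta\mapsto\mathfrak{b}_\eta$ with $\mathfrak{b}_\eta(\alpha,\beta)=\mathrm{Tr}(\alpha\eta\overline\beta)$, is a well-defined $F$-linear map onto the $G$-invariant symmetric forms $\mathrm{Sym}^G(R_F)$ (the natural target making this an isomorphism), then to prove it is bijective, and finally to observe that the variety statement is automatic. Since inversion permutes $G$ and fixes $N_G$, the involution $\overline{(\cdot)}$ descends to $R_F=F[G]/(N_G)$, so $A_F$ and each $\mathfrak{b}_\eta$ are meaningful; $F$-linearity of $\eta\mapsto\mathfrak{b}_\eta$ is immediate. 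Each $\mathfrak{b}_\eta$ is $G$-invariant, because by cyclicity $\mathrm{Tr}\big(g(\alpha\eta\overline\beta)g^{-1}\big)=\mathrm{Tr}(g^{-1}g\,\alpha\eta\overline\beta)=\mathrm{Tr}(\alpha\eta\overline\beta)$ together with $\overline g=g^{-1}$; and for $\eta\in A_F$ it is symmetric, since $\mathrm{Tr}(z)=\mathrm{Tr}(\overline z)$ (true on each basis element $g$ because $\mathrm{Tr}(g)=\mathrm{Tr}(g^{-1})$, then extend linearly) yields $\mathrm{Tr}(\beta\eta\overline\alpha)=\mathrm{Tr}\big(\overline{\beta\eta\overline\alpha}\big)=\mathrm{Tr}(\alpha\,\overline\eta\,\overline\beta)=\mathrm{Tr}(\alpha\eta\overline\beta)$.

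\textbf{The trace form and injectivity.} The one genuine input I would need is that $t(x,y):=\mathrm{Tr}(xy)$ is a \emph{nondegenerate} symmetric $F$-bilinear form on $R_F$. This holds because $R_F$ is a finite-dimensional semisimple $F$-algebra: in characteristic $0$ the group algebra $F[G]$ is semisimple, and as $N_G$ is central the two-sided ideal $(N_G)$ equals $F\cdot N_G$, which is precisely the trivial isotypic component, so $R_F$ is a sum of simple two-sided ideals; and the trace form of the regular representation of a semisimple algebra over a field of characteristic $0$ is nondegenerate. Injectivity of $\Phi$ is then immediate: if $\mathfrak{b}_\eta=0$, setting $\beta=1$ (with $\overline 1=1$) gives $t(\alpha,\eta)=\mathrm{Tr}(\alpha\eta)=0$ for all $\alpha\in R_F$, hence $\eta=0$.

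\textbf{Surjectivity.} Given $\mathfrak{b}\in\mathrm{Sym}^G(R_F)$, nondegeneracy of $t$ produces a unique $\eta_0\in R_F$ with $\mathfrak{b}(\alpha,1)=\mathrm{Tr}(\alpha\eta_0)$ for all $\alpha$. For every $g\in G$, $G$-invariance gives $\mathfrak{b}(\alpha,g)=\mathfrak{b}(g^{-1}\alpha,1)=\mathrm{Tr}(g^{-1}\alpha\eta_0)=\mathrm{Tr}(\alpha\eta_0g^{-1})=\mathrm{Tr}(\alpha\eta_0\overline g)=\mathfrak{b}_{\eta_0}(\alpha,g)$; since the images of the $g\in G$ span $R_F$ and both $\mathfrak{b}$ and $\mathfrak{b}_{\eta_0}$ are linear in the second variable, $\mathfrak{b}=\mathfrak{b}_{\eta_0}$. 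Finally $\mathfrak{b}_{\eta_0}$ is symmetric, so $\mathrm{Tr}\big(\alpha(\eta_0-\overline{\eta_0})\overline\beta\big)=0$ for all $\alpha,\beta$; taking $\beta=1$ and invoking nondegeneracy of $t$ once more forces $\overline{\eta_0}=\eta_0$, i.e. $\eta_0\in A_F$, and $\Phi(\eta_0)=\mathfrak{b}$.

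\textbf{The variety statement and the main obstacle.} Both $A_F\subseteq R_F$ and $\mathrm{Sym}^G(R_F)\subseteq\mathrm{Sym}_F(R_F)$ are linear subspaces, i.e. affine $F$-spaces cut out by $F$-linear equations; once bases are chosen, $\Phi$ and $\Phi^{-1}$ are represented by matrices over $F$, so $\Phi$ is an isomorphism of affine $F$-varieties. The only non-formal step is the nondegeneracy of the trace form on the quotient ring $R_F$ (and, relatedly, checking that $\overline{(\cdot)}$ and $\mathrm{Tr}$ are compatible with the projection $F[G]\to R_F$); everything else is routine bookkeeping with cyclicity of $\mathrm{Tr}$. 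Since the result is quoted from Bayer-Fluckiger, one may alternatively simply cite it, but the above is the self-contained argument I would write.
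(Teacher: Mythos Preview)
Your proof is correct and follows essentially the same approach as the paper's: both hinge on the nondegeneracy of the trace form on the semisimple algebra $R_F$, then use $G$-invariance to reduce an arbitrary $\mathfrak{b}\in\mathrm{Sym}^G(R_F)$ to the linear functional $\alpha\mapsto\mathfrak{b}(\alpha,1)$ (the paper uses $B(1,\overline{(\cdot)})$ instead), and represent that functional via the trace pairing. Your write-up is in fact more complete than the paper's, since you explicitly check well-definedness (symmetry and $G$-invariance of $\mathfrak{b}_\eta$) and verify that the resulting $\eta_0$ lies in $A_F$, steps the paper leaves implicit.
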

\begin{proof}
    Note that $R_F$ is a separable algebra, hence the trace form $\mathrm{Tr}_1$ is a non-degenerate bilinear form. Since the bilinear form $B:R_F\times R_F\rightarrow F$ is $G$-invariant and symmetric, it follows that:
    \[
    B(x,y)=B(1,\overline{x}y).
    \]
    Let $R_F^*$ be the dual $F$-vector space of $R_F$ and consider the linear functional:
    \[
    R_F\rightarrow F,\quad x\mapsto B(1,\overline{x}).
    \]
    Since the bilinear form $\mathrm{Tr}_1$ is non-degenerate, there exists a unique $\alpha\in R_F$, such that
    \[
    \mathrm{Tr}_1(\alpha,(\cdot))=B(1,\overline{(\cdot)}).
    \]
    It follows that 
    \[
    \mathrm{Tr}_1(\alpha,\overline{x}y)=B(1,\overline{x}y).
    \]
    Hence
    \[
    \mathrm{Tr}_{\alpha}(x,y)=B(x,y).
    \]
    This proves the isomorphism of the linear map, and this gives the result.
\end{proof} 
\begin{proposition}[Compare with Theorem 42 from \cite{gargava2023lattice}]\label{prop: Cholesky decomposition}
    Let $F$ be an algebraicallly closed field of characteristic 0. The map
    \[
    \psi: R_F\rightarrow A_F,\quad \nu\mapsto \nu\overline{\nu}
    \]
    is surjective on $F$-points.
\end{proposition}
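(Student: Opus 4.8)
The plan is to reduce, via the Wedderburn decomposition of the semisimple $F$-algebra $R_F=F[G]/(N_G)$, to the case of a single matrix block, and there to combine the (over an algebraically closed field, entirely elementary) classification of involutions of the first kind with a Cholesky-type factorization of matrices. \textbf{Step 1 (reduction to blocks).} Since $\mathrm{char}\,F=0$ the algebra $R_F$ is semisimple, and since $F$ is algebraically closed $R_F\cong\prod_i M_{n_i}(F)$. The map $\overline{(\cdot)}$ is an $F$-linear anti-automorphism with $\overline{\overline{x}}=x$, so it permutes the primitive central idempotents of $R_F$ in an order-$\le2$ fashion; grouping the simple factors into the orbits of this permutation gives $R_F=\prod_j B_j$ with each $B_j$ a $\overline{(\cdot)}$-stable ideal whose identity element is $\overline{(\cdot)}$-fixed. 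Then $A_F=\prod_j(A_F\cap B_j)$ and $\psi(\nu)=\bigl(\psi(\nu_j)\bigr)_j$ (the $R_F$-level analogue of the orthogonality in \Cref{lemma: isotypic comp are orthogonal}), so it suffices to treat each $B_j$, which is either (a) a single matrix algebra $M_n(F)$ fixed by $\overline{(\cdot)}$, or (b) a product $M_n(F)\times M_n(F)$ on which $\overline{(\cdot)}$ interchanges the two factors.

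\textbf{Step 2 (the two block types).} In case (b), involutivity and anti-multiplicativity force $\overline{(x,y)}=(\phi(y),\phi^{-1}(x))$ for an anti-automorphism $\phi$ of $M_n(F)$; the symmetric elements are precisely the $(\phi(y),y)$, and $(\phi(y),y)=\nu\overline{\nu}$ with $\nu=(\phi(y),1)$, so $\psi$ is already onto on such a block. In case (a), the restriction of $\overline{(\cdot)}$ is an $F$-linear involution fixing the centre $F\cdot 1$ pointwise, i.e.\ an involution of the first kind, so by Skolem--Noether it equals $x\mapsto u\,x^{\top}u^{-1}$ for some $u\in\mathrm{GL}_n(F)$, and $\overline{\overline{x}}=x$ forces $u^{\top}=\pm u$. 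An $F$-linear change of coordinates — which alters $\psi$ on the block only by $\nu\mapsto\nu P$ and hence does not affect surjectivity — then puts us in either the orthogonal normal form $\overline{x}=x^{\top}$, or, when $n$ is even, the symplectic normal form $\overline{x}=Jx^{\top}J^{-1}$ with $J$ the standard alternating matrix.

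\textbf{Step 3 (orthogonal and symplectic cases).} For $\overline{x}=x^{\top}$, $A_F\cap B_j$ is the space of symmetric matrices, and over the algebraically closed field $F$ any symmetric $S$ diagonalizes as $S=PDP^{\top}$, so $S=(P\sqrt{D})(P\sqrt{D})^{\top}=\psi(P\sqrt{D})$ using entrywise square roots. For $\overline{x}=Jx^{\top}J^{-1}$, the map $x\mapsto xJ$ identifies $A_F\cap B_j$ with the space of alternating matrices and turns $\psi$ into $\nu\mapsto\nu J\nu^{\top}$; expanding $\nu J\nu^{\top}$ in terms of the consecutive column pairs of $\nu$ and using the symplectic normal form of an alternating bilinear form shows that every alternating matrix arises this way. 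Assembling the blocks yields surjectivity of $\psi$ on $A_F$.

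\textbf{Main obstacle.} I do not expect an essential difficulty once the Wedderburn bookkeeping of Step~1 is set up: the content is just the orthogonal/symplectic dichotomy for an involution of a matrix algebra, and the only point that needs a genuine (but short) argument rather than an immediate observation is the surjectivity of $\nu\mapsto\nu J\nu^{\top}$ onto alternating matrices in the symplectic case — the orthogonal case being the classical ``$LL^{\top}$''-type factorization of a symmetric matrix over an algebraically closed field (compare Theorem~42 of \cite{gargava2023lattice}). The one thing to be careful about is checking that $A_F$, $\psi$, and the involution are all compatible with the orbit decomposition of Step~1, and that $F$-linear changes of basis within a block leave surjectivity of $\psi$ unaffected.
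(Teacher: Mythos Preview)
Your proposal is correct and follows essentially the same route as the paper: Wedderburn-decompose $R_F$ into involution-orbits, dispose of the swapped pairs trivially, and on the fixed simple blocks invoke the orthogonal/symplectic dichotomy for involutions of the first kind followed by a normal-form factorization of the (anti)symmetric target. The paper only writes out the antisymmetric case in detail and works with a general $w$ rather than first normalizing to the standard $J$, whereas you normalize first and treat both cases explicitly---a cosmetic difference only.
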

\begin{proof}
    Let $\eta\in A_F$. Consider the decomposition of $R_F$ into central simple algebras: \[
    R_F=B_1\oplus\dots\oplus B_k,
    \]
    and write $\eta=x_1+\dots+ x_{k}$, where $x_i\in B_i$, for $i=1,\dots,k$.
    Note that the involution $\overline{(\cdot)}$ induces a permutation $\sigma\in S_{k}$ of the above direct summands. It follows that $\mathrm{proj}_{i}(\overline{\eta})\in B_{\sigma(i)}$, and since $\overline{\eta}=\eta$, we obtain:
    \[
    \mathrm{Proj}_i(\eta)=x_{\sigma(i)}.
    \]
    Let us now construct an element $\nu\in R_F$ such that $\eta=\overline{\nu}\nu$. If $\sigma(i)\neq i$, set $z_i=x_i$ and set $z_{\sigma(i)}$ as the identity element of the algebra $B_{\sigma(i)}$. If $\sigma(i)=i$, then, for this fixed index $i$, we have that $\overline{(\cdot)}$ induces an involution of the algebra $B_i$. The proof will follow if we find $z_i\in B_i$ such that $z_i\overline{z_i}=x_i$. To simplify the notation, from now on we will omit the subindex $i$ and write $B=B_i, x=x_i$ and $n=n_i$. By the Artin-Wedderburn theorem, $B_i$ is isomorphic to the algebra of matrices $M_{n_i}(F)$, for some $n_i\in \Z$. We may now apply \cite[Theorem 7.4]{scharlau2012quadratic}, to get that there exists $w\in M_{n}(F)$ such that the matrix $w$ is symmetric or antisymmetric, and
    \[
    \overline{x}=wx^{tr}w^{-1}.
    \]
    We shall only complete the proof for the antisymmetric case, since the symmetric case can be handled in an analogous manner. Note that we are assuming $\overline{x}=x$, hence the above becomes:
    \[
    x=wx^{tr}w^{-1}.
    \]
      Since $w$ is antisymmetric and invertible, we have that $n$ is even, so there exists an integer $m$ such that $n=2m$. Moreover, if $J$ is the matrix that has $m$ blocks of the form $\begin{bmatrix}
        0&1\\
        -1&0
    \end{bmatrix}$ along the main diagonal and 0's elsewhere:
   \[
        J=\begin{bmatrix}
            0&1\\
            -1&0\\
                &&0&1\\
                &&-1&0\\
                &&&&\ddots\\
                &&&&&&0&1\\
                &&&&&&-1&0
        \end{bmatrix}
       \] 
     then there exists $p$ such that
    \[
    w=p^{tr}Jp.
    \]
    Now choose a matrix $u$ such that
    \[
    x=p^{tr}uJ(p^{tr})^{-1}.
    \]
    From the fact that $x=\overline{x}$, one sees that $u$ is antisymmetric, so there exists a matrix $s$ such that $u=sJ's^{tr}$ where $J'$ is the matrix analogous to $J$, which has $l$ blocks, ($l\leq m),$ of the form $\begin{bmatrix}
        0&1\\
        -1&0
    \end{bmatrix}$ along the main diagonal and 0's elsewhere. Note that we may even choose $s$, so that $sJs^{tr}=sJ's^{tr}$.  Let $i$ be a choice of a square root of $-1$.  If we let $z=i\,p^{tr}s(p^{tr})^{-1}$, then it is not hard to compute:
    \[
    z\overline{z}=x.
    \]
    This concludes the result.
\end{proof}

\section{The genericity of the log unit lattice}\label{section: genericity}
For the rest of this section, $\overline{\Q}\subset \C$ denotes the algebraic closure of $\Q$ inside $\C$.
The main objective of this section is to prove \Cref{theorem: log-lats are generic}; it indicates that the Gram matrix of a log unit lattice of a real Galois number field with Galois group $G$, corresponds to the generic point of the irreducible algebraic variety $\mathrm{Sym}^G(R_{\overline{\Q}}[G])$, which is a fundamental result for the proofs of both \Cref{Theorem: CompleteInvariants} and \Cref{Theorem: RegulatorArithmeticInvariant}. We start by giving a definition that will help to make the above statement more precise.
\begin{definition}\label{definition: generic}
    Let $X_{\overline{\Q}}$ be an irreducible algebraic variety defined over $\overline{\Q}$, let $X_\C$ be its extension of scalars to $\C$, and let $\beta:X_\C\rightarrow X_{\overline{\Q}}$ be the base change map. An element $p\in X_\C$ is $\overline{\Q}$-generic if $\{\beta(p)\}$ is dense in $X_{\overline{\Q}}$ with respect to the Zariski topology.
\end{definition}

Theorem \ref{theorem: log-lats are generic} is conditional on the \emph{algebraic independence of logarithms}:
\begin{conjecture}[Conjecture 14.1 of \cite{waldschmidt1992linear}]\label{conjecture: independence of logs}
    Let $\ell_1,\dots,\ell_m\in\C$ be $\Q$-linearly independent logarithms of algebraic numbers, then $\ell_1,\dots,\ell_m$ are algebraically independent.
\end{conjecture}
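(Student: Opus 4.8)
The statement to be established is the Weak Schanuel Conjecture --- Conjecture 14.1 of \cite{waldschmidt1992linear}, the algebraic independence of $\Q$-linearly independent logarithms of algebraic numbers --- which is a special case of Schanuel's conjecture and one of the central open problems in transcendental number theory. It is not known, and this paper (like essentially all work relying on it) assumes it as a hypothesis; what follows is an honest account of the approach one would take and of exactly where it breaks down, not a proof. The reduction to Schanuel is immediate: applying Schanuel's conjecture to $(\ell_1,\dots,\ell_m)$ and observing that each $e^{\ell_i}=\alpha_i$ is algebraic --- so adjoining the $\alpha_i$ does not change the transcendence degree --- would give $\mathrm{trdeg}_\Q\,\Q(\ell_1,\dots,\ell_m)\geq m$, i.e.\ algebraic independence.

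\textbf{The plan.} I would follow the classical transcendence machine. The case $m=1$ is Hermite--Lindemann, and the linear shadow of the statement --- $\Q$-linear independence of the $\ell_i$ implies $\overline{\Q}$-linear independence --- is Baker's theorem, which is essentially the strongest \emph{complete} result currently available. For genuine algebraic independence the steps would be: (i) use Siegel's lemma to construct an auxiliary polynomial $P$ in $\ell_1,\dots,\ell_m$ of controlled degree and height vanishing to high order on a carefully chosen finite set of points; (ii) run an arithmetic zero estimate, exploiting the exponential structure $e^{\ell_i}=\alpha_i\in\overline{\Q}$, to show $P$ does not vanish identically on the relevant algebraic torus and thereby extract a nonzero algebraic number of controlled size; (iii) contradict an analytic upper bound via a Liouville/height inequality; and (iv) feed the outcome into a criterion for algebraic independence (Philippon's criterion, or the Gel'fond--Nesterenko formalism) to upgrade ``many transcendental values'' to ``$m$ algebraically independent values.'' Partial results of exactly this shape exist --- results of Gel'fond, Brownawell, Waldschmidt, Diaz and others give that $\mathrm{trdeg}_\Q\,\Q(\ell_1,\dots,\ell_m)$ grows at least linearly in $m$ (Diaz's bound is of order $m/2$), and the six exponentials theorem and its strengthenings control various small configurations.

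\textbf{The hard part.} The step that blocks everything is (iv) together with the quantitative core of (ii)--(iii): the interpolation and zero-estimate bookkeeping caps the number of logarithms one can prove algebraically independent by roughly the ``multiplicity budget'' of the auxiliary construction, which in the logarithmic setting grows only sublinearly in $m$. Concretely, not even a single pair of $\Q$-linearly independent logarithms of algebraic numbers --- $\log 2$ and $\log 3$, say --- is currently known to be algebraically independent, so no instance of the conjecture beyond Baker's linear theorem and the density bounds above is within reach. Closing the gap seems to require either a genuinely new algebraic-independence criterion tailored to the exponential lattice generated by the pairs $(\ell_i,\alpha_i)$, or an external input analogous to Nesterenko's theorem on the values of $E_2,E_4,E_6$ at a point (which settled the companion problem for $\pi$ and $e^{\pi}$). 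No such bridge to logarithms is known, which is why we assume the statement rather than prove it, citing it as \Cref{conjecture: independence of logs}.
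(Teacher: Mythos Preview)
Your assessment is correct and matches the paper exactly: the statement is labeled as a \emph{conjecture} in the paper and is never proved there; it is assumed as a standing hypothesis (the Weak Schanuel Conjecture) for all the main results. There is therefore no ``paper's own proof'' to compare against, and your honest account that this is an open problem, together with the sketch of the classical transcendence machinery and the precise point at which it stalls, is the appropriate response.
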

Let us now set the framework for the sequel; from now on, $K$ denotes a real Galois number field of degree $n:=[K:\Q]$, and $G:=\Gal(K/\Q)$ denotes its Galois group. The letter $F$ is reserved to denote a field of characteristic 0, and we will use the notation $R_F$, and $\mathrm{Sym}^G(R_F)$ from \Cref{subsection: symmetricbil}. We let $\mathcal{H}_F\subset \mathbb{A}_F^n$ denote the algebraic variety given by the hyperplane in the $n$-dimensional affine $F$-space satisfying the equation $\sum x_i=0$. The action of $G$ on $\Lambda_K$ endows $\mathcal{H}_F$ with an orthogonal action with respect to the usual dot product. We now use $\mathcal{H}_F$ to parametrize bilinear forms in $\mathrm{Sym}^G(R_F)$ by vectors in $\mathcal{H}_F$.

\begin{lemma}\label{lemma:grammatrixMinkunit}
     Let $F$ be an algebraically closed field. The map
    \[
    \mathfrak{Gr}:\mathcal{H}_F\rightarrow \mathrm{Sym}^G(R_F),\quad v\mapsto \{(\alpha,\beta)\mapsto \langle \alpha v,\beta v\rangle \}
    \]
    is surjective on $\overline{F}$-points.
\end{lemma}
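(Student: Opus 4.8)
The plan is to reduce the statement to the two structural results already established: the description of $\mathrm{Sym}^G(R_F)$ by twisted trace forms (\Cref{lemma: traceforms}) and the Cholesky-type surjectivity of $\nu\mapsto\nu\overline\nu$ over an algebraically closed field (\Cref{prop: Cholesky decomposition}). Since $F$ is algebraically closed, $\overline F=F$, so it suffices to show that every $\mathfrak b\in\mathrm{Sym}^G(R_F)$ equals $\mathfrak{Gr}_v$ for some $v\in\mathcal H_F$. First I would pin down the algebraic structures on $\mathcal H_F$. Identifying $\mathbb{A}_F^n$ with the regular representation $F[G]$, with coordinates indexed by $G$, the hyperplane $\mathcal H_F=\{x:\sum_{g\in G}x_g=0\}$ is precisely $\ker(\epsilon)$ for the augmentation $\epsilon\colon F[G]\to F$, equivalently $(1-e_0)F[G]$ where $e_0=\tfrac1n N_G$ is the central idempotent cutting out the trivial isotypic part. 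In particular $N_G$ annihilates $\mathcal H_F$, so $\mathcal H_F$ carries a left $R_F$-module structure, and the quotient map $q\colon F[G]\to R_F$ restricts to an isomorphism of $R_F$-modules $\pi\colon\mathcal H_F\xrightarrow{\sim}R_F$ (it is injective on $(1-e_0)F[G]$, and both sides have dimension $n-1$). This makes $\alpha v$ meaningful for $\alpha\in R_F$, $v\in\mathcal H_F$, with $\pi(\alpha v)=\alpha\,\pi(v)$.

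Next I would compare the metric structures: under $\pi$, the Euclidean form of $\mathcal H_F$ becomes $\tfrac1n$ times the trace form of $R_F$ used in \Cref{lemma: traceforms}. On the regular representation, $\langle v,w\rangle=\tfrac1n\,\mathrm{trace}_{F[G]}(v\overline w)$ (trace of left multiplication), because the coefficient of the identity in $v\overline w$ is $\sum_g v_gw_g$ and left translation by $g\neq e$ is fixed-point-free. Splitting $F[G]=e_0F[G]\oplus(1-e_0)F[G]$ gives $\mathrm{trace}_{F[G]}(z)=\epsilon(z)+\mathrm{trace}_{R_F}(q(z))$ for all $z$; since $\epsilon(v\overline w)=\epsilon(v)\epsilon(w)=0$ for $v,w\in\mathcal H_F$, and $\overline{(\cdot)}$ descends to $R_F$ compatibly with $q$, this yields $\langle v,w\rangle=\tfrac1n\,\mathrm{trace}_{R_F}\!\bigl(\pi(v)\,\overline{\pi(w)}\bigr)$. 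Writing $\nu=\pi(v)$, for all $\alpha,\beta\in R_F$,
\[
\mathfrak{Gr}_v(\alpha,\beta)=\langle\alpha v,\beta v\rangle=\tfrac1n\,\mathrm{trace}_{R_F}\!\bigl(\alpha\nu\,\overline{\beta\nu}\bigr)=\tfrac1n\,\mathrm{trace}_{R_F}\!\bigl(\alpha\,(\nu\overline\nu)\,\overline\beta\bigr),
\]
that is, $\mathfrak{Gr}_v=\tfrac1n\,\mathrm{Tr}_{\nu\overline\nu}$ in the notation of \Cref{lemma: traceforms} (note $\nu\overline\nu\in A_F$).

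To conclude, given $\mathfrak b\in\mathrm{Sym}^G(R_F)$, \Cref{lemma: traceforms} provides a unique $\mu\in A_F$ with $\mathfrak b=\mathrm{Tr}_\mu$. As $A_F$ is an $F$-subspace, $n\mu\in A_F$, so \Cref{prop: Cholesky decomposition} yields $\nu\in R_F$ with $\nu\overline\nu=n\mu$. Setting $v:=\pi^{-1}(\nu)\in\mathcal H_F$, the displayed identity gives $\mathfrak{Gr}_v=\tfrac1n\,\mathrm{Tr}_{n\mu}=\mathrm{Tr}_\mu=\mathfrak b$, which proves surjectivity.

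The genuinely substantive inputs — \Cref{lemma: traceforms} and \Cref{prop: Cholesky decomposition}, resting on non-degeneracy of the separable trace form, Artin--Wedderburn, and the classification of forms admitting an involution — are already in hand, so no new hard estimate is required. I expect the only delicate point, and hence the main (mild) obstacle, to be the bookkeeping in the first two steps: correctly identifying $\mathcal H_F$ with the nontrivial part of the regular representation so that its $R_F$-module structure is legitimate and $\pi$ is an isomorphism, and verifying that under $\pi$ the Euclidean form is \emph{exactly} $\tfrac1n$ of the trace form appearing in \Cref{lemma: traceforms}, including that the involution $\overline{(\cdot)}$ matches across $F[G]$ and $R_F$.
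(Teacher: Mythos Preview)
Your proof is correct and follows essentially the same route as the paper's: both reduce to \Cref{lemma: traceforms} and \Cref{prop: Cholesky decomposition}. The only difference is that the paper works with an arbitrary $R_F$-module isomorphism $\varphi:\mathcal H_F\to R_F$, obtains some invertible $\gamma\in A_F$ representing the Euclidean form, and applies Cholesky twice (to $\gamma$ and to the target $\eta$), whereas your explicit identification lets you compute the transported Euclidean form as $\tfrac1n\,\mathrm{Tr}_1$ and thus get away with a single application of Cholesky.
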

\begin{proof}
    Let $\mathfrak{b}\in \mathrm{Sym}^G(R_{F})$ by Lemma \ref{lemma: traceforms}, there exists $\eta\in A_F$ such that 
    \begin{equation}\label{eq: bilinearvstr}
    \mathfrak{b}(\alpha,\beta)=\mathrm{Tr}(\alpha\eta\overline{\beta}).
    \end{equation}
    Choose an isomorphism $\varphi:\mathcal{H}_F\rightarrow R_F$ of $R_F$-modules, let $w=\varphi^{-1}(1)$ and apply Lemma \ref{lemma: traceforms}  again to obtain $\gamma\in A_F$ such that
    \begin{equation}\label{eq: innerproducvstrace}
    \langle \alpha w,\beta w\rangle=\mathrm{Tr}(\alpha \gamma \overline{\beta}),\quad\forall \alpha,\beta\in R_F.
    \end{equation}
    Note that $\gamma$ is invertible since $\langle\cdot,\cdot\rangle$ is non-degenerate. Now, by Cholesky's decomposition, there exist $\nu'$ and $\nu''\in R_F$ such that:
    \[
    \gamma=\nu'\overline{\nu'}\quad\text{and}\quad \eta=\nu''\overline{\nu''}.
    \]
    It follows that
    \begin{equation}\label{eq: eta in terms of gamma}
    \eta=\nu''\nu'^{-1}\,\gamma\, \overline{\nu'^{-1}}\,\overline{\nu''}.
    \end{equation}
    Hence, by \ref{eq: innerproducvstrace} and \ref{eq: bilinearvstr}, we get that for every $\alpha,\beta\in R_{F}$:
    \begin{align*}
        \mathfrak{b}(\alpha,\beta)&=\mathrm{Tr}(\alpha\eta\overline{\beta})\\
        &=\mathrm{Tr}(\alpha\nu''\nu'^{-1}\,\gamma\, \overline{\nu'^{-1}}\,\overline{\nu''}\overline{\beta}),\quad \text{(from \ref{eq: eta in terms of gamma})}.\\
        &=\langle\alpha (\nu''\nu'^{-1}w),\beta(\nu''\nu'^{-1}w)\rangle.
    \end{align*}
    It follows that $\mathfrak{Gr}( \nu''\nu'^{-1}w)=\mathfrak{b}$. This proves the result.
\end{proof}
We now show that there exist units in $\E_K^\times$ whose corresponding log-vectors are not contained in a Zariski closed subset definable over $\overline{\Q}$ in $\mathcal{H}_{\C}$.
\begin{lemma}\label{Lemma: generic unit}
    Let $\mathcal{H}_{\overline{\Q}}\subset \mathbb{A}_{\overline{\Q}}^n$ be the the algebraic variety given by the hyperplane in the\, $n$-dimensional $\overline{\Q}$-affine space satisfying the equation $\sum x_i=0$. Assume \Cref{conjecture: independence of logs}. Let $u\in \E_{K}^\times$ be a weak Minkowski unit, then $\mathrm{Log}(u)\in \mathcal{H}_{\C}$ is $\overline{\Q}$-generic.
\end{lemma}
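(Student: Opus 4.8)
The plan is to reduce $\overline{\Q}$-genericity of $\Log(u)$ to algebraic independence of the coordinates of $\Log(u)$, and then invoke Conjecture \ref{conjecture: independence of logs}. Fix an ordering of the real embeddings $\sigma_1,\dots,\sigma_n$ of $K$, so that $\Log(u) = (\log|\sigma_1(u)|,\dots,\log|\sigma_n(u)|)$, a point lying on the hyperplane $\mathcal{H}_{\overline{\Q}}$ of dimension $n-1$. Since $\mathcal{H}_{\overline{\Q}}$ is an affine space, it suffices to show that the $n-1$ coordinates $\log|\sigma_1(u)|,\dots,\log|\sigma_{n-1}(u)|$ (which form a coordinate system on $\mathcal{H}_{\overline{\Q}}$, the last being minus their sum) are algebraically independent over $\overline{\Q}$: indeed, if they satisfied a nontrivial polynomial relation over $\overline{\Q}$, that relation would cut out a proper closed $\overline{\Q}$-subvariety of $\mathcal{H}_{\overline{\Q}}$ containing $\beta(\Log u)$, contradicting density; conversely, algebraic independence forces $\{\beta(\Log u)\}$ to be Zariski dense. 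Each $\log|\sigma_i(u)|$ is a logarithm of an algebraic number (namely $|\sigma_i(u)|^2 = \sigma_i(u)\overline{\sigma_i(u)}$ is a positive algebraic number, and $\log|\sigma_i(u)| = \tfrac12\log|\sigma_i(u)|^2$; since $K$ is totally real, in fact $\sigma_i(u)$ itself is a real algebraic number and $\log|\sigma_i(u)|$ is directly a logarithm of an algebraic number up to sign bookkeeping). So by Conjecture \ref{conjecture: independence of logs}, it is enough to prove that $\log|\sigma_1(u)|,\dots,\log|\sigma_{n-1}(u)|$ are $\Q$-linearly independent.

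To establish $\Q$-linear independence of these $n-1$ logarithms, suppose $\sum_{i=1}^{n-1} c_i \log|\sigma_i(u)| = 0$ with $c_i\in\Q$; clearing denominators we may take $c_i\in\Z$. Writing $c_n := 0$ and using that $\sum_{i=1}^n \log|\sigma_i(u)| = \log|N_{K/\Q}(u)| = 0$ (as $u$ is a unit), a relation among the first $n-1$ logarithms is the same as a relation $\sum_{i=1}^n c_i \log|\sigma_i(u)| = 0$ modulo the all-ones vector; equivalently, the vector $(c_1,\dots,c_n)$ is, after subtracting a suitable rational multiple of $(1,\dots,1)$, orthogonal to $\Log(u)$ in $\R^n$. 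But $u$ is a \emph{weak Minkowski unit}, so by Theorem \ref{Theorem: existence of weak Minkowski unit} the $G$-orbit $\{\Log(u^\tau)\}_{\tau\in G} = \{\tau\cdot\Log(u)\}_{\tau\in G}$ contains $n-1$ independent vectors and hence spans $\mathcal{H}_\R^{n-1}$. A single linear relation on the coordinates of $\Log(u)$, i.e. a nonzero vector $c\in\R^n/\R(1,\dots,1) \cong \mathcal{H}_\R^{n-1}$ with $\langle c, \Log(u)\rangle = 0$, would — applying each $\tau\in G$, which acts orthogonally by permuting coordinates — yield $\langle \tau^{-1}c, \Log(u)\rangle = \langle c, \tau\Log(u)\rangle = 0$ for all $\tau$. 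But $\{\tau\Log(u)\}_{\tau\in G}$ spans the whole $(n-1)$-dimensional space $\mathcal{H}_\R^{n-1}$, forcing $c = 0$ in $\mathcal{H}_\R^{n-1}$, i.e. $(c_1,\dots,c_n)$ a multiple of $(1,\dots,1)$; since $c_n=0$ this gives all $c_i=0$. Hence the $n-1$ logarithms are $\Q$-linearly independent.

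Combining: the coordinates $\log|\sigma_1(u)|,\dots,\log|\sigma_{n-1}(u)|$ are $\Q$-linearly independent logarithms of algebraic numbers, so by Conjecture \ref{conjecture: independence of logs} they are algebraically independent over $\overline{\Q}$, which is precisely the statement that $\Log(u)$ is a $\overline{\Q}$-generic point of $\mathcal{H}_{\overline{\Q}}$. The main obstacle is not any single step but rather the careful bookkeeping at the boundary between the two hyperplane conditions: the relation $\sum \log|\sigma_i(u)| = 0$ is exactly what makes "$n-1$ coordinates" the right count and must be used twice — once to identify linear relations on $\mathcal{H}$ with $\langle c,\Log(u)\rangle=0$ in the quotient, and once (implicitly, via it being a unit) to even land in $\mathcal{H}$ in the first place — and one must make sure the weak-Minkowski spanning property is invoked for the correct ambient space $\mathcal{H}_\R^{n-1}$ rather than $\R^n$. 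A secondary technical point is checking that $\log|\sigma_i(u)|$ genuinely qualifies as "a logarithm of an algebraic number" in the precise sense of Conjecture \ref{conjecture: independence of logs}; since $K$ is totally real this is immediate, as each $\sigma_i(u)\in\R^\times$ is algebraic, but it is worth a sentence in the writeup.
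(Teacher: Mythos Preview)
Your overall approach matches the paper's: reduce $\overline{\Q}$-genericity on the hyperplane $\mathcal{H}$ to algebraic independence of the first $n-1$ coordinates of $\Log(u)$, and then reduce that, via Conjecture~\ref{conjecture: independence of logs}, to their $\Q$-linear independence. The paper is equally terse on this last step, simply asserting that the weak Minkowski property makes $\{\log|\iota(g(u))|\}_{g\in A}$ $\Q$-linearly independent for any $(n{-}1)$-element subset $A\subset G$.

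However, your justification of that step has a genuine gap. From the single relation $\langle c,\Log(u)\rangle=0$ you claim that ``applying each $\tau\in G$, which acts orthogonally by permuting coordinates'' yields $\langle c,\tau\Log(u)\rangle=0$ for all $\tau$. Orthogonality of the $G$-action only gives the identity $\langle \tau^{-1}c,\Log(u)\rangle=\langle c,\tau\Log(u)\rangle$; it says nothing about why either side vanishes. A generic nonzero vector in $\mathcal{H}_\R^{n-1}$ orthogonal to $\Log(u)$ will \emph{not} be orthogonal to all of $\tau\Log(u)$, so as written you never use the crucial hypothesis $c\in\Q^n$.

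The missing ingredient is exactly that rationality. After clearing denominators so that $c_i\in\Z$, the relation $\sum_i c_i\log|\sigma_i(u)|=0$ exponentiates (using $\sigma_i=\iota\circ g_i$ and total realness) to an identity $\prod_i g_i(u)^{c_i}=\pm1$ \emph{in $K$}. Now one may legitimately apply each $h\in G$ as a field automorphism, obtaining $\prod_i (hg_i)(u)^{c_i}=\pm1$ and hence $\sum_i c_i\,\ell_{hg_i}=0$ for every $h\in G$, where $\ell_g:=\log|\iota(g(u))|$. The vectors $(\ell_{hg_i})_i$ are the rows of the matrix $(\ell_{g_jg_i})_{j,i}$, which has rank $n-1$ by the weak Minkowski hypothesis and all rows in $\mathcal{H}$; hence its row span is $\mathcal{H}$, forcing $c\in\mathcal{H}^\perp=\R(1,\dots,1)$ and therefore $c=0$, as you wanted. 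Note these vectors are coordinate permutations of $\Log(u)$ arising from the \emph{left} regular action, not the translates $\tau\Log(u)=\Log(\tau u)$ you wrote (which come from the right action); for nonabelian $G$ these differ, though either family suffices once actually established.
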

\begin{proof}
Let $\beta_{\mathcal H}:\mathcal{H}_\C\rightarrow \mathcal{H}_{\overline{\Q}}$ be the base change map. Choose a subset $A\subset G$ with $n-1$ elements. Since $K$ is totally real and since $u$ is a weak Minkowski unit, for any embedding $\iota:K\rightarrow \R$, the subset \[\{\log|\iota(g(u))|\}_{g\in A}\subset \R\] is $\Q$-linearly independent. It follows that the first $n-1$ entries of the vector $\mathrm{Log}(u)$ are algebraically independent. In other words, the point $\beta_{\mathcal H}(\mathrm{Log}(u))$ is not contained in any Zariski closed proper subset of $\mathcal{H}_{\overline{\Q}}$. The result follows.
\end{proof}

\begin{theorem}\label{theorem: log-lats are generic}
    Assume Conjecture \ref{conjecture: independence of logs}. Let $K/\Q$ be a real Galois number field with Galois group $G$ and let $\Lambda_K$ be its log unit lattice. Given $F/\Q$ an extension of fields,
    let $\mathrm{Sym}^G(R_F[G])$ be the $F$-variety consisting of $G$-invariant $F$-valued bilinear forms on $R_F[G]$. Suppose $u\in \E_K^\times$ is a weak Minkowski unit (see \Cref{Theorem: existence of weak Minkowski unit}). If $v=\Log(u)$, then the bilinear form
    \[
    \mathfrak{Gr}_v:R_\C\times R_\C\rightarrow \C, \quad \mathfrak{Gr}_v(\alpha,\beta)=\langle \alpha v,\beta v\rangle,
    \]
    is $\overline{\Q}$-generic.
\end{theorem}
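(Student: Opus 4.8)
The plan is to factor the map $\mathfrak{Gr}_v$ through the two surjections already established in the preliminaries and then push the genericity of $\Log(u)$ along them. Concretely, by \Cref{Lemma: generic unit} the vector $v=\Log(u)$ is a $\overline{\Q}$-generic point of the irreducible $\overline{\Q}$-variety $\mathcal{H}_{\overline{\Q}}$. The assignment $v\mapsto \mathfrak{Gr}_v$ is precisely the morphism $\mathfrak{Gr}:\mathcal{H}_F\to \mathrm{Sym}^G(R_F[G])$ of \Cref{lemma:grammatrixMinkunit}, which is defined over $\Q$ (its formula $(\alpha,\beta)\mapsto\langle\alpha v,\beta v\rangle$ is polynomial in the coordinates of $v$ with rational coefficients coming from the inner product and the $R[G]$-action), and which is surjective on $\overline{F}$-points. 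So the picture is: $v$ generic in $\mathcal H$, and $\mathfrak{Gr}$ a dominant $\Q$-morphism onto the irreducible variety $\mathrm{Sym}^G(R_{\overline{\Q}}[G])$; I want to conclude $\mathfrak{Gr}_v=\mathfrak{Gr}(v)$ is generic in the target.

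The key step is the general principle that the image of a generic point under a dominant morphism of irreducible varieties is again generic. I would state and use this as a small lemma (or cite it): if $f:X\to Y$ is a morphism of irreducible $\overline{\Q}$-varieties whose image is Zariski-dense in $Y$, and $p\in X_\C$ is $\overline{\Q}$-generic, then $f_\C(p)\in Y_\C$ is $\overline{\Q}$-generic. The proof is a one-liner on coordinate rings: denseness of the image means $f^\sharp:\overline{\Q}[Y]\to\overline{\Q}[X]$ is injective; genericity of $p$ means the corresponding homomorphism $\overline{\Q}[X]\to\C$ is injective (the only Zariski-closed set through $\beta(p)$ is all of $X$, equivalently the prime ideal of functions vanishing at $\beta(p)$ is $(0)$); composing, the homomorphism $\overline{\Q}[Y]\to\C$ attached to $f_\C(p)$ is injective, i.e. $\beta_Y(f_\C(p))$ has trivial vanishing ideal, i.e. it is generic. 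One must be slightly careful that ``$\{\beta(p)\}$ dense'' is equivalent to ``the kernel of evaluation at $\beta(p)$ is the zero ideal,'' which holds because $X$ is irreducible so $\overline{\Q}[X]$ (or $\overline{\Q}(X)$ if $X$ is not affine, working on an affine chart) is a domain; the same reduction to an affine open chart handles the non-affine case, and the morphism $\mathfrak{Gr}$ has affine source and target anyway, so no subtlety actually arises here.

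Applying this lemma with $X=\mathcal{H}$, $Y=\mathrm{Sym}^G(R[G])$, $f=\mathfrak{Gr}$, and $p=v$: surjectivity on $\overline{\Q}$-points from \Cref{lemma:grammatrixMinkunit} certainly gives Zariski-denseness of the image, $\mathrm{Sym}^G(R_{\overline{\Q}}[G])$ is irreducible (noted right after its definition, being a linear subspace of a space of symmetric matrices), and $v$ is $\overline{\Q}$-generic by \Cref{Lemma: generic unit}; hence $\mathfrak{Gr}_v=\mathfrak{Gr}(v)$ is $\overline{\Q}$-generic in $\mathrm{Sym}^G(R_\C[G])$, which is the claim. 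I expect the main obstacle to be purely expository: making the ``image of generic is generic'' lemma airtight, in particular checking that $\mathfrak{Gr}$ is genuinely defined over $\Q$ (so that one is really mapping within the $\overline{\Q}$-structures and the base-change maps $\beta_{\mathcal H}$, $\beta_{\mathrm{Sym}}$ commute with it) and that denseness of the image is what one gets from surjectivity on $\overline{\Q}$-points rather than some weaker statement — but \Cref{lemma:grammatrixMinkunit} as stated already gives surjectivity on $\overline{F}$-points, so denseness is immediate. No hard analysis or number theory enters beyond what \Cref{Lemma: generic unit} already extracted from the Weak Schanuel Conjecture; the theorem is essentially the formal composition of \Cref{Lemma: generic unit} with \Cref{lemma:grammatrixMinkunit}.
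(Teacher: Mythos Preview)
Your proposal is correct and follows essentially the same approach as the paper: both combine \Cref{Lemma: generic unit} (genericity of $v=\Log(u)$ in $\mathcal{H}$) with \Cref{lemma:grammatrixMinkunit} (surjectivity of $\mathfrak{Gr}$) via the principle that a surjective/dominant morphism defined over $\overline{\Q}$ sends generic points to generic points. The paper phrases this last step as ``the image of a dense subset under a surjective map is dense'' inside a base-change commutative square, whereas you spell it out at the level of coordinate rings; the content is identical.
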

\begin{proof}
Recall the map $\mathfrak{Gr}_F:\mathcal{H}_\C\rightarrow \mathrm{Sym}^G(R_F)$ from \Cref{lemma:grammatrixMinkunit}. Consider the commutative diagram given by base change:
\[\begin{tikzcd}
	{\mathcal{H}_\C} & {\mathrm{Sym}^G(R_\C)} \\
	{H_{\overline{\mathbb{Q}}}} & {\mathrm{Sym}^G(R_{\overline{\Q}})}
	\arrow["{\mathfrak{Gr}_\C}", from=1-1, to=1-2]
	\arrow["\beta"', from=1-1, to=2-1]
	\arrow[from=1-2, to=2-2]
	\arrow["{\mathfrak{Gr}_{\overline{\Q}}}", from=2-1, to=2-2]
\end{tikzcd}\]
By Lemma \ref{lemma:grammatrixMinkunit}, the map $\mathfrak{Gr}_{\overline{\Q}}$ is a surjective map on $\overline{\Q}$ points, hence it is a surjective map of $\overline{\Q}$-schemes. Now, use Lemma \ref{Lemma: generic unit} and the fact that $u\in \E_K^\times$ is a weak Minkowski unit to get that $\mathrm{Log}(u)$ corresponds to a closed point in $\mathcal{H}_\C$ that is $\overline{\Q}$-generic. Since the image of a dense subset under a surjective map is dense, the set $\{\mathfrak{Gr}_{\overline{\Q}}(\beta(\mathrm{Log}(u)))\}$ is dense in the variety $\mathrm{Sym}^G(R_\Q)$. This proves the result.
\end{proof}
We finish this section with a remark that relates the log unit lattice with the bilinear form from \Cref{theorem: log-lats are generic}
\begin{remark}\label{remark: Gram vs weakGram}
	Let $K$ be any real Galois number field of degree $[K:\Q]=n$ with Galois group $G:=\Gal(K/\Q)$, let $\Lambda_K$ be its log unit lattice and let $b_K$ be any Gram matrix for $\Lambda_K$. Let $u$ be a weak Minkowski unit, let $v=\Log(u)$, and let $\mathrm{Gr}_v$ be any Gram matrix (with respect to a $\Q$-basis of $R_\Q$) of the bilinear form:
	\[
	\mathfrak{Gr}_v:R_\C\times R_{\C}\rightarrow \C\quad \mathfrak{Gr}_v(\alpha,\beta)=\langle \alpha v,\beta v \rangle.
	\]
     There exists a matrix $A\in \mathrm{GL}_{n-1}(\Q)$ such that
     \[A\mathrm{Gr}_vA^{tr}=b_K.
     \]
     This follows from the fact that $\Q\otimes_\Z\Lambda_K\cong R_\Q.$
\end{remark}

\section{The linear independence at $s=1$ of distinct Dedekind zeta functions}\label{section: linear independence}
This section is dedicated to the proof of \Cref{Theorem: RegulatorArithmeticInvariant}, which states, assuming \Cref{conjecture: independence of logs}, the linear independence of the residues at $s=1$ of distinct Dedekind zeta functions. We start by citing a special case of a result from \cite{DeyGorlachKaihnsa2020} that we will use in the sequel. Afterwards, we will introduce the classical results of Gassman on arithmetic equivalence, these results allow us to reduce the proof to a representation theory statement. With these Tools in hand, we introduce some notation and then proceed with the proof of \Cref{Theorem: RegulatorArithmeticInvariant}. 
\subsection{Coordinate-wise square map in a hyperplane}
Let $F$ be an algebraically closed field of characteristic 0. Given $e\geq 1$, we denote by $F[x_0,\dots, x_n]_e$ the vector space of polynomials of degree $e$. Consider the action of the group $(\Z/2)^{n+1}$ on $F[x_1,\dots,x_n]_e$ given by rescaling the coordinates $x_0,\dots,x_n$ with $\pm1$. Let $\mathcal{G}_{n+1}$ the quotient $(\Z/2)^{n+1}$ by the diagonal subgroup $\{(g,\dots,g):g\in \Z/2\}$. The above defines an action of $\mathcal{G}_2$ on the projective space $\mathbb{P}(F[x_0,\dots, x_n]_e)$. The following Lemma is \cite[Lemma 3.3 and Proposition 3.4]{DeyGorlachKaihnsa2020}  for the special case of the "squaring map" on hyperplanes.
\begin{lemma}\label{lemma: hypersurface}
	Let $\mathfrak{h}\subset \mathbb P^n_{\overline{\Q}}$ be a hyperplane in the projective space given by the zero set of a linear form: $f(x_0,\dots,x_n)=\sum_{i=0}^{n} c_i x_i=0$, $c_i\in \overline{\Q}$. Let $\varphi_2$ be the map:
	\[
	\varphi_2:\mathfrak{h}\rightarrow \mathbb{P}^n,\quad [x_0:\cdots:x_n]\mapsto [x_0^2:\hdots:x_n^2]
	\]
    \begin{enumerate}
    \item[(i)] Consider the polynomial
    \[
    h(x_0,\dots,x_n)=\prod_{g\in \mathcal{G}_2}g\cdot f(x_0,\dots,x_n).
    \]
    Then $h(x_0,\dots, x_n)\in \overline{\Q}[x_0^2,\dots, x_n^2]$.
    \item[(ii)] the image of $\varphi_2$ is a hypersurface given by the zero set of the polynomial $f(x_0,\dots,x_n)$ given by replacing every instance of $x_i^2$ in the polynomial $h(x_0,\dots, x_n)$.
    \end{enumerate}
  
 \end{lemma}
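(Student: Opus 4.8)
The statement is an explicit description of the ideal-theoretic image of the squaring map restricted to a hyperplane, so the natural strategy is to first understand the "closure" (the group-orbit product) and then descend it through the squaring substitution. The plan is as follows.

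First I would prove (i). Fix coordinates and let $f = \sum_{i=0}^n c_i x_i$. For $g \in \mathcal{G}_2$, pick a representative $(\varepsilon_0,\dots,\varepsilon_n) \in (\Z/2)^{n+1}$, so that $g\cdot f = \sum_i (-1)^{\varepsilon_i} c_i x_i$; note this is well-defined on the projective linear form up to an overall sign, so the product $h = \prod_{g \in \mathcal{G}_2} g\cdot f$ is well-defined up to sign, which is harmless. The key point is that $h$ is invariant under every coordinate sign change $x_i \mapsto -x_i$: such a sign change permutes the factors of the product (it acts on $\mathcal{G}_2$ by translation), hence fixes $h$ up to sign, and a more careful count (or simply pairing each factor with its image) shows it fixes $h$ exactly, or again up to an overall harmless sign. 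A polynomial in $\overline{\Q}[x_0,\dots,x_n]$ invariant under all the sign flips $x_i \mapsto -x_i$ is precisely a polynomial in the squares $x_i^2$; this is the standard description of the invariant ring of $(\Z/2)^{n+1}$ acting by sign changes. Hence $h \in \overline{\Q}[x_0^2,\dots,x_n^2]$, which is (i). I would be slightly careful here to track signs, since $h$ is a product of an even or odd number of linear forms depending on $|\mathcal{G}_2| = 2^n$, and to note that what we actually need — membership in the subring generated by the $x_i^2$ — is insensitive to the global sign.

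Next, (ii). By (i) we may write $h(x_0,\dots,x_n) = H(x_0^2,\dots,x_n^2)$ for a unique polynomial $H \in \overline{\Q}[y_0,\dots,y_n]$; this $H$ is the polynomial obtained by "replacing every $x_i^2$ by $y_i$", which after renaming $y_i \to x_i$ is exactly the polynomial in the statement. I would then argue that the Zariski closure of the image $\varphi_2(\mathfrak h)$ is the hypersurface $V(H)$. For one inclusion: any point of the form $[x_0^2:\cdots:x_n^2]$ with $f(x) = 0$ satisfies $H(x_0^2,\dots,x_n^2) = h(x) = 0$ because the identity-factor of the product $h$ vanishes; so $\varphi_2(\mathfrak h) \subseteq V(H)$, and since $V(H)$ is closed the closure is contained in it. For the reverse inclusion it suffices to check that $V(H)$ is irreducible of dimension $n-1$ and that $\varphi_2(\mathfrak h)$ is not contained in any proper closed subset of it — equivalently, that $\varphi_2$ is dominant onto $V(H)$ and $V(H)$ is a hypersurface (hence has no proper subvariety of the same dimension containing a Zariski-dense constructible set). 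Here is where I would invoke that this is the special case of \cite[Lemma 3.3 and Proposition 3.4]{DeyGorlachKaihnsa2020} already cited: those results identify the image of the coordinate-wise power map on a linear subspace as the variety cut out by exactly the "norm" polynomial $\prod_{g} g\cdot f$ pushed through the substitution, with the dimension count ensuring it is a hypersurface whenever $\mathfrak h$ is a hyperplane. So the bulk of (ii) is a citation; the work I would actually carry out is matching notation: checking that the $\mathcal{G}_2$-orbit product in their generality specializes to $h$, that "degree $e=1$, squaring map, hyperplane" lands in their hypothesis, and that their conclusion "$V$ is the hypersurface defined by [their polynomial]" is literally the statement of (ii).

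\textbf{Main obstacle.} The genuinely delicate part is not (i) — that is an invariant-theory triviality modulo sign bookkeeping — but making (ii) airtight: ensuring the image is \emph{all} of $V(H)$ and that $V(H)$ is genuinely a hypersurface (irreducible, codimension one, and not all of $\mathbb{P}^n$), rather than something smaller. If one wanted a self-contained argument one would need to show $H$ is not a unit and not a $p$-th power in a way that collapses dimension, i.e. that the generic fiber of $\varphi_2$ over $V(H)$ is finite; this is exactly what the cited $\mathbb{P}^n$-result of \cite{DeyGorlachKaihnsa2020} packages, so in the write-up I expect to lean on that citation and spend my effort only on the reduction/notation-matching rather than reproving their fiber-dimension analysis. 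A secondary, purely cosmetic nuisance is that $g\cdot f$ and hence $h$ are only defined up to sign on projective linear forms; I would handle this once at the start by fixing representatives and noting all subsequent claims are sign-insensitive.
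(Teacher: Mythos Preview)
Your proposal is correct and matches the paper's approach: the paper does not supply its own proof but simply records the lemma as the special case of \cite[Lemma 3.3 and Proposition 3.4]{DeyGorlachKaihnsa2020} for the squaring map on hyperplanes, which is exactly the citation you invoke for part (ii). Your additional self-contained invariant-theory argument for part (i) is more explicit than anything in the paper, but it is straightforward and compatible with the cited source, so there is no divergence in strategy.
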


\subsection{Arithmetic equivalence and representation theory}
We will now introduce some fundamental results by Gassman that we need. Our main source for arithmetic equivalence is \cite{sutherland2018arithmetic}; see also \cite{MantillaSoler2019} for a perspective on arithmetic equivalence via the Artin formalism.

Let $K$ be a number field of degree $n=[K:\Q]$, let $N/\Q$ be any Galois number field containing $K$, let $G:=\Gal(N/\Q)$ and $H:=\Gal(N/K)$. From now on $R=\Z[G]/(N_G)$. Consider the set of all embeddings of $K$ into $\C$: 
\[E_K:=\hom_{\Q}(K,\C).\]
The action of $G$ on the set $E_K$ yields a permutation representation $T_K:=\Q^{E_K}$. On the other hand, if we let $N_H:=\sum_{h\in H}h\in \Q[G]$, the natural right action of $G$ on the right ideal $N_H\Q[G]$ also endows it with the structure of a subrepresentation of $\Q[G]$.
The following proposition is essential for the proof of \Cref{Theorem: RegulatorArithmeticInvariant}. 
\begin{proposition}\label{prop: Gassman criterion}
    Let $K_1$ and $K_2$ be two number fields, let $N/\Q$ be a Galois number field containing $K_1$ and $K_2$. Let $G:=\Gal(N/\Q)$, $H_1:=\Gal(N/K_1)$ and $H_2:=\Gal(N/K_2)$. The following are equivalent:
    \begin{enumerate}
        \item[(a)] $K_1$ and $K_2$ are arithmetically equivalent;
        \item[(b)] $N_{H_1}\Q[G]\cong N_{H_2}\Q[G]$ as right $\Q[G]$-modules, and
        \item[(c)] $N_{H_1}R_\Q \cong N_{H_2}R_\Q$ as right $R_\Q$-modules.
    \end{enumerate}
\end{proposition}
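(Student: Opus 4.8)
## Proof Plan for Proposition \ref{prop: Gassman criterion}

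The plan is to prove the cycle of implications $(a)\Leftrightarrow(b)$ and $(b)\Leftrightarrow(c)$, where the first equivalence is essentially the classical Gassmann criterion packaged in module-theoretic language, and the second is a purely algebraic reduction modulo the norm element $N_G$.

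For $(a)\Leftrightarrow(b)$: recall that the Dedekind zeta function of $K_i$ factors, via Artin formalism, according to the permutation representation $T_{K_i}=\Q^{E_{K_i}}$ of $G=\Gal(N/\Q)$ on the embeddings of $K_i$ into $\C$, and two number fields contained in $N$ are arithmetically equivalent if and only if $T_{K_1}\cong T_{K_2}$ as $\Q[G]$-representations (this is the standard formulation in \cite{sutherland2018arithmetic}). So the first step is to identify $T_{K_i}$ with $N_{H_i}\Q[G]$ as right $\Q[G]$-modules. This is the orbit-stabilizer observation: the set $E_{K_i}$ of embeddings of $K_i$ is a transitive $G$-set isomorphic to $H_i\backslash G$, so $T_{K_i}=\Q[H_i\backslash G]$, and the map sending the coset $H_i g$ to $N_{H_i} g$ gives an isomorphism of right $\Q[G]$-modules $\Q[H_i\backslash G]\cong N_{H_i}\Q[G]$. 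Chaining this with the arithmetic-equivalence-iff-$T_{K_1}\cong T_{K_2}$ statement yields $(a)\Leftrightarrow(b)$.

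For $(b)\Leftrightarrow(c)$: here I would exploit that $R_\Q=\Q[G]/(N_G)$ and that $N_G$ is central with $\Q[G]=\Q N_G\oplus (\text{complement})$ as $\Q[G]$-bimodules — concretely, $e_0:=N_G/\#G$ is a central idempotent, $\Q[G]=e_0\Q[G]\oplus(1-e_0)\Q[G]$, and $R_\Q\cong(1-e_0)\Q[G]$ as rings (and as $\Q[G]$-bimodules). Now decompose each $N_{H_i}\Q[G]$ along this splitting: its $e_0$-part is $e_0 N_{H_i}\Q[G]=\Q N_G$ (the same one-dimensional trivial representation for both $i$, since $N_{H_i}$ contains the identity with coefficient $1$ and $N_{H_i}N_G=\#H_i\cdot N_G$), and its $(1-e_0)$-part is $N_{H_i}(1-e_0)\Q[G]$, which under the identification $R_\Q\cong(1-e_0)\Q[G]$ becomes $N_{H_i}R_\Q$. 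Therefore $N_{H_1}\Q[G]\cong N_{H_2}\Q[G]$ as right $\Q[G]$-modules if and only if their common trivial $e_0$-summands agree (automatic) and $N_{H_1}R_\Q\cong N_{H_2}R_\Q$ as right $R_\Q$-modules — giving $(b)\Leftrightarrow(c)$. One small point to check is that an isomorphism of $R_\Q$-modules and an isomorphism of $\Q[G]$-modules between these submodules are the same data, which holds because the $R_\Q$-action factors the $\Q[G]$-action and $N_G$ acts as zero on $(1-e_0)\Q[G]$.

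The main obstacle is not really an obstacle so much as a bookkeeping point: one must be slightly careful that the "arithmetic equivalence $\Leftrightarrow$ $T_{K_1}\cong T_{K_2}$" input is invoked correctly — in particular that it is insensitive to the choice of Galois $N$ containing both fields (so that the statement of the proposition, which allows any such $N$, is well-posed), and that the isomorphisms are of \emph{right} modules throughout (the permutation action is a right action once one views embeddings as $H_i\backslash G$). I expect the splitting argument in $(b)\Leftrightarrow(c)$ to be entirely routine via the central idempotent $e_0$, and the bulk of the write-up will consist in carefully stating the orbit-stabilizer identification and citing the Gassmann criterion from \cite{sutherland2018arithmetic}.
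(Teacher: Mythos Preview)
Your proposal is correct and follows essentially the same approach as the paper: the equivalence $(a)\Leftrightarrow(b)$ is attributed to the classical Gassmann criterion (the paper simply declares it well known), and for $(b)\Leftrightarrow(c)$ both you and the paper split off the trivial summand $\Q_{\mathrm{triv}}$ via the central idempotent $e=N_G/\#G$ to identify $N_{H_i}R_\Q\cong(1-e)N_{H_i}\Q[G]$. Your write-up is somewhat more explicit about the orbit--stabilizer identification and the module-versus-ring compatibility, but there is no substantive difference in method.
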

\begin{proof}
    \textit The equivalence of \textit{(a)} with \textit{(b)} is well known. Let us prove that \textit{(b)} is equivalent to \textit{(c)}. Note that $e=N_G/(\#G)$ is the primitive central idempotent corresponding to the trivial representation in $\Q[G]$. Then, for $i=1,2$, we have the following isomorphisms of representations: 
    \[ N_{H_i}\Q[G]\cong\Q_{\mathrm{triv}}\oplus (1-e)N_{H_i}\Q[G],\quad\text{and}\quad N_{H_i}R_\Q\cong (1-e)N_{H_i}\Q[G],\]
    where $\Q_{\mathrm{triv}}$ is the trivial representation of $G$. Given that two representations are isomorphic if and only if their isotypic components are isomorphic, we have
    \[
    N_{H_1}\Q[G]\cong N_{H_2}\Q[G]\Leftrightarrow (1-e)N_{H_1}\Q[G]\cong (1-e)N_{H_2}\Q[G]. 
    \]
    It follows that for $i=1,2$, there is an isomorphism $(1-e)N_{H_i}\Q[G]\cong N_{H_i}R_\Q$. This gives the result.
\end{proof}

\subsection{Proof of the linear independence}
Let us start by introducing some notation. Given totally real number fields $K_1,\dots,K_n$, let $N$ be the Galois closure of the compositum $K_1,\dots, K_n$, and let $G:=\Gal(N/\Q)$. Let $[n]=\{1,\dots, n\}$. For $i\in [n]$, we let $H_i:=\Gal(N/K_i)$, set $m_i:=[G:H_i]$, and let $V_{i}:=N_{H_i}R_{\overline{\Q}}$. We are also going to consider the isotypic decompositions:
\begin{equation}\label{eq:isotypic decompositions}
R_{\overline{\Q}}=\bigoplus_{\pi\in\mathrm{Irrep}(G),\pi\neq 1}(R_\pi)_{\overline{\Q}},\quad \text{and}\quad V_i=\bigoplus_{\pi\in\mathrm{Irrep}(G),\pi\neq 1}(V_i)_\pi,\quad i\in [n].
\end{equation}
Let us also choose bases $B_\pi$ and $(B_i)_\pi$ for each of the above components so that:
\begin{equation}\label{eq: choice of basis isotypic comp}
(R_\pi)_{\overline{\Q}}=\mathrm{span}_{\overline{\Q}}(B)_\pi \quad\text{and}\quad V_i=\mathrm{span}_{\overline{\Q}}(B_i)_\pi,\quad \pi\neq1\in \mathrm{Irrep}(G).
\end{equation}

It follows that $B_i:=\bigcup_\pi (B_{i})_\pi$ is a basis of $V_i$ and $B:=\bigcup_\pi B_\pi$ is a basis of $R_{\overline{\Q}}$.

\begin{theorem}\label{Theorem: RegulatorArithmeticInvariant}
   Assume \Cref{conjecture: independence of logs} and let $K_1,\, K_2,\,\dots,\,K_n$ be totally real number fields all distinct and different from $\Q$ such that $\zeta_{K_i}\neq\zeta_{K_j}$ for every $i\neq j$. Then, the sets  
     \[\mathfrak{reg}:=\{\reg(K_i)\}_{i=1}^n\quad \text{and}\quad \mathfrak{res}:=\{\mathrm{res}_{s=1}\zeta_{K_i}(s)\}_{i=1}^n\] are linearly independent subsets of $\C$ over $\overline{\Q}$.  
\end{theorem}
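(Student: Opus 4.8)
The plan is to reduce the $\overline{\Q}$-linear independence of $\mathfrak{res}$ and of $\mathfrak{reg}$ to the \emph{dominance} of a single explicit morphism out of $\mathrm{Sym}^G(R_{\overline{\Q}})$, and then to settle that dominance using \Cref{theorem: log-lats are generic} together with Gassmann's criterion. First, by the analytic class number formula a totally real field $K$ of degree $n$ satisfies $\mathrm{res}_{s=1}\zeta_K(s)=\tfrac{2^{\,n-1}h_K}{\sqrt{d_K}}\,\reg(K)$, so $\mathrm{res}_{s=1}\zeta_{K_i}(s)$ and $\reg(K_i)$ differ by a factor in $\overline{\Q}^\times$; hence $\mathfrak{res}$ is $\overline{\Q}$-linearly independent iff $\mathfrak{reg}$ is, and it suffices to treat $\mathfrak{reg}$. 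Next I would pass to the Galois closure $N$ of $K_1\cdots K_n$, put $G=\Gal(N/\Q)$, $H_i=\Gal(N/K_i)$, fix a weak Minkowski unit $u\in\E_N^\times$ (\Cref{Theorem: existence of weak Minkowski unit}) and set $v=\Log(u)$. Realizing $\Lambda_{K_i}$ inside $\Lambda_N$ as in \Cref{lemma: including a log unit lattice} and using \Cref{prop: norm of weak Mink unit}, \Cref{cor: module structure of log unit lattice}, and \Cref{definition: regulator}, one obtains $\reg(K_i)^2=q_i\,D_i(\mathfrak{Gr}_v)$ with $q_i\in\Q_{>0}$, where $D_i\colon\mathrm{Sym}^G(R_{\overline{\Q}})\to\mathbb{A}^1_{\overline{\Q}}$ is the regular function sending a form $\mathfrak b$ to the determinant of its Gram matrix restricted to $V_i:=N_{H_i}R_{\overline{\Q}}$ (in a fixed $\Q$-basis, which affects $D_i$ only by a nonzero square). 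Positive definiteness of $\Lambda_{K_i}$ gives $\reg(K_i)=c_i\sqrt{D_i(\mathfrak{Gr}_v)}$ with $c_i=\sqrt{q_i}\in\overline{\Q}_{>0}$.

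Now suppose $\sum_{i=1}^n a_i\reg(K_i)=0$ with $a_i\in\overline{\Q}$ not all zero; take a minimal support $S$, so $a_i\ne0$ for $i\in S$. If $|S|=1$ then $\reg(K_{i_0})=0$, impossible since $K_{i_0}\ne\Q$ has unit rank $\ge1$; so $|S|\ge2$. The point $(\reg(K_i))_{i\in S}$ lies on the hyperplane $\{\sum_{i\in S}a_ix_i=0\}$, so \Cref{lemma: hypersurface} shows that its coordinatewise square $(\reg(K_i)^2)_{i\in S}=(q_iD_i(\mathfrak{Gr}_v))_{i\in S}$ satisfies a \emph{nonzero} polynomial relation over $\overline{\Q}$ (the substitution $x_i^2\rightsquigarrow x_i$ applied to $\prod_g g\!\cdot\!f$, $f=\sum_{i\in S}a_ix_i$). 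But $\mathfrak{Gr}_v$ is $\overline{\Q}$-generic in $\mathrm{Sym}^G(R_{\overline{\Q}})$ by \Cref{theorem: log-lats are generic} (this is where \Cref{conjecture: independence of logs} is used), so this relation holds identically along $\mathrm{Sym}^G(R_{\overline{\Q}})$; equivalently, the morphism $\Psi\colon\mathfrak b\mapsto(q_iD_i(\mathfrak b))_{i\in S}$ is \emph{not} dominant onto $\mathbb{A}^S_{\overline{\Q}}$.

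The proof then finishes by contradiction: I would show $\Psi$ \emph{is} dominant, i.e.\ that $(D_i)_{i\in S}$ are algebraically independent over $\overline{\Q}$ in the function field of $\mathrm{Sym}^G(R_{\overline{\Q}})$ — and it is here that pairwise non–arithmetic-equivalence enters. Using \Cref{lemma: isotypic comp are orthogonal} (with the rational central idempotents and the involution $\overline{(\cdot)}$), $\mathrm{Sym}^G(R_{\overline{\Q}})$ is an affine space splitting as a product over $\overline{(\cdot)}$-stable isotypic blocks $\mathcal B$, and $D_i=\prod_{\mathcal B}D_{i,\mathcal B}$ with each $D_{i,\mathcal B}$ a determinantal polynomial in the (pairwise disjoint) $\mathcal B$-coordinates, of degree equal to the multiplicity $m_{i,\mathcal B}=\langle\mathrm{Ind}_{H_i}^G\mathbf 1,\mathcal B\rangle$, which for $m_{i,\mathcal B}>0$ is a power of an irreducible cutting out the locus where $\mathfrak b$ degenerates on $(V_i)_{\mathcal B}$. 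Working in the UFD $\overline{\Q}[\mathrm{Sym}^G(R_{\overline{\Q}})]$, one argues that an algebraic relation among the $D_i$ — after the square-class reduction supplied by the sign symmetry already exploited in \Cref{lemma: hypersurface} — forces, for some $i\ne j$ in $S$, equality of the tuples $(m_{i,\mathcal B})_{\mathcal B}$ together with the multiplicity subspaces $(V_i)_{\mathcal B}\subseteq(R_{\mathcal B})_{\overline{\Q}}$; hence $\mathrm{Ind}_{H_i}^G\mathbf 1\cong\mathrm{Ind}_{H_j}^G\mathbf 1$, so $K_i$ and $K_j$ are arithmetically equivalent by \Cref{prop: Gassman criterion}, contradicting the hypothesis.

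The main obstacle is exactly this last step. Passing from ``the vectors $(m_{i,\mathcal B})_{\mathcal B}$ are pairwise distinct'' to ``the polynomials $D_i$ are algebraically (and, after the Kummer/square-class reduction, $\overline{\Q}$-linearly) independent'' is not formal, since distinct vectors need not be linearly independent; worse, self-dual constituents of even Schur dimension — in particular quaternionic/symplectic-type ones — contribute perfect-power factors to $D_i$ that are invisible modulo squares, so the degree (= multiplicity) bookkeeping alone is insufficient and one must retain the full multiplicity-subspace data and combine it with a Mackey/double-coset analysis to recover arithmetic equivalence. I expect this representation-theoretic core to require the most care; the remaining steps are essentially formal given the results already established in the paper.
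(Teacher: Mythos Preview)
Your setup is essentially the paper's: Galois closure $N$ with $G=\Gal(N/\Q)$, the determinant functions $D_i$ on $\mathrm{Sym}^G(R_{\overline{\Q}})$ attached to $V_i=N_{H_i}R_{\overline{\Q}}$, the squaring device of \Cref{lemma: hypersurface}, and the genericity of $\mathfrak{Gr}_v$ from \Cref{theorem: log-lats are generic}. The divergence is entirely in the endgame, and there the proposal has a genuine gap.

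You aim to reach a contradiction by showing that $\Psi=(D_i)_{i\in S}$ is dominant onto $\mathbb A^S_{\overline\Q}$, i.e.\ that the $D_i$ are algebraically independent in the coordinate ring of $\mathrm{Sym}^G(R_{\overline{\Q}})$. This is false in general. Take $G=(\Z/2)^3$ realized by a totally real $N$ (e.g.\ $N=\Q(\sqrt2,\sqrt3,\sqrt5)$), let $K_1,\dots,K_7$ be the seven real quadratic subfields and $K_8=N$. These eight fields are pairwise non--arithmetically-equivalent, yet $\mathrm{Sym}^G(R_{\overline{\Q}})\cong\mathbb A^7$ with isotypic coordinates $t_1,\dots,t_7$, each $D_i=t_i$ for $i\le 7$, and $D_8=t_1\cdots t_7=\prod_{i\le7}D_i$. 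So $\Psi$ is not dominant, and your stronger fallback --- that \emph{any} algebraic relation among the $D_i$ forces a Gassmann equivalence --- is refuted by the same example (this is the Brauer--Kuroda phenomenon). What would actually suffice is the weaker statement that no relation \emph{of the specific shape produced by \Cref{lemma: hypersurface}} can hold, but that is exactly the step you flag as the main obstacle and leave undone; the UFD/multiplicity bookkeeping you sketch does not distinguish that shape from, say, the monomial relation above.

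The paper does not attempt dominance. It runs an induction on $n$ via a one-parameter deformation: for a chosen nontrivial $\pi'\in\mathrm{Irrep}(G)$, rescale only the $\pi'$-isotypic block of $\mathfrak b$ by a scalar $c$, so that $D_i(\mathfrak b(\pi',c))=c^{\,d^{(i)}_{\pi'}}D_i(\mathfrak b)$ with $d^{(i)}_{\pi'}=\dim(V_i)_{\pi'}$. Because the hypersurface polynomial $f$ is, after the squaring substitution, a \emph{product of linear forms}, its vanishing identically in $c$ forces one linear factor to vanish identically, and the top-degree coefficient of that factor gives a new linear relation among $\{\reg(K_i):i\in D_{\pi'}\}$ where $D_{\pi'}=\{i:d^{(i)}_{\pi'}\text{ maximal}\}$. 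Choosing $\pi'$ so that the multiplicities are not all equal --- possible precisely because the $K_i$ are pairwise non--arithmetically-equivalent (\Cref{prop: Gassman criterion}) --- makes $D_{\pi'}\subsetneq[n]$, and the induction hypothesis finishes. The product-of-linear-forms structure of $f$ is exactly what makes this work; your dominance strategy discards it.
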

\begin{proof}
    From now on we will use the notation $[n]:=\{1,\dots,n\}$. Let us prove by induction on $n$ that the sets $\mathfrak{res}$ and $\mathfrak{reg}$ are $\overline{\Q}$-linearly independent. The base case $n=1$ is clear. So, assume now that the result is true for any $k<n$, and let us prove the result for $n$. By the class number formula, it is clear that the $\overline{\Q}$-linear independence of $\mathfrak{reg}$ implies the $\overline{\Q}$-linear independence of $\mathfrak{res}$. So, it is enough to prove the $\overline{\Q}$-linear independence of the set $\mathfrak{reg}$. Suppose there is a linear combination
	\begin{equation}\label{eq: lineardependence}
	\sum_{i=1}^n\lambda_i\reg(K_i)=0. 
	\end{equation}

    Recall the isotypic decompositions in \cref{eq:isotypic decompositions}.
    Our objective is to prove that if the linear combination in \cref{eq: lineardependence} is nontrivial, then at least two of the representations $V_{1},\dots, V_n$ are isomorphic as $\overline{\Q}[G]$-modules which will imply that they are isomorphic as $\Q[G]$-modules, and then lead to the conclusion that at least two of the fields $K_1,\dots, K_n$ have the same Dedekind zeta function by \Cref{prop: Gassman criterion}. 

   
   For $i\in [n]$, and for each $\pi\in\mathrm{Irrep}(G)$, let $d_\pi^{(i)}:=\dim_{\overline{\Q}}(V_i)_\pi$. Note that for a fixed pair $i,j\in [n]$, the quantities $d_\pi^{(i)}$ and $d_\pi^{(j)}$ cannot be all the same for every $\pi$, otherwise we would have $V_i\cong V_j$ which would lead to $\zeta_{K_i}=\zeta_{K_j}$. 
    
    For $\pi\in \mathrm{Irrep}(G)$ and for $i\in [n]$, consider the bases $(B_{i})_\pi$ and $B_\pi$ of $(V_i)_{\pi}$ and $(R_{\overline{\Q}})_\pi$, respectively, from \cref{eq: choice of basis isotypic comp}. For a field $F$ of characteristic 0, define, for each $i\in [n]$, the following map of $F$-varieties: \[\psi_i:\mathrm{Sym}^G(R_{F})\rightarrow \mathbb{A}_{F}^1,\quad \mathfrak{b}\mapsto \det\left(\mathfrak{b}( w_k^i,\,w_l^i)\right)_{w_k^i,w_l^i\in B_i}.\]
    All these maps together yield the rational map: 
            \[
            \Psi_{F}:\mathrm{Sym}^G(R_{F})\rightarrow \mathbb{P}_{F}^n,\quad \Psi_{F}(\mathfrak{b})=[\psi_1(\mathfrak b):\psi_2(\mathfrak b):\dots:\psi_n(\mathfrak b)].
            \]
    The map $\Psi_{F}$ is only undefined for the closed subset consisting of bilinear forms $\mathfrak{b}\in \mathrm{Sym}^G(R_F)$ for which $\Psi_i(\mathfrak b)=0$ for all $i\in [n]$.
    
    Choose a weak Minkowski unit $u\in \E_N^\times$, let $v=\Log(u)$, and let $\mathfrak{Gr}_v\in\mathrm{Sym}^G(R_\C)$ be the bilinear form:\[
    \mathfrak{Gr}_v:R_\C \times R_\C\rightarrow \C, \quad  (\alpha,\beta)\mapsto \langle \alpha  v,\beta v\rangle.
    \]
    
     Note that by \Cref{prop: norm of weak Mink unit} and \Cref{remark: Gram vs weakGram} there exist nonzero constants $a_1,a_2,\dots, a_n\in\overline{\Q}$ such that
    \[
    \Psi_\C(\mathfrak{Gr}_v)=[a_1\reg(K_1)^2:a_2\reg(K_2)^2:\dots:a_n\reg^2(K_n)].
    \]
    Let $c_i=\frac{\lambda_i}{\sqrt{a_i}}$ and recall the group $\mathcal{G}_n$ from \Cref{lemma: hypersurface}. Then, \Cref{lemma: hypersurface} and \cref{eq: lineardependence} imply that  $\Psi(\mathfrak{Gr}_v)$ lies in the hypersurface $S$ given by the zero set of the polynomial $f(x_1,\dots,x_n)$ that arises by replacing each instance of $x_i^2$ with $x_i$ in the polynomial:
    \[
    \prod_{g\in\mathcal{G}_2}(c_1g_1 x_1+c_2g_2x_2+\dots+c_ng_n x_n)\in \overline{\Q}[x_1^2,\dots,x_n^2].
    \]
    Now consider the following commutative diagram given by base change from $\overline{\Q}$ to $\C$:
    \[\begin{tikzcd}
    	{\mathrm{Sym}^G(R_\C)} & {\mathbb{P}_\C^n} \\
    	{\mathrm{Sym}^G(R_{\overline{\Q}})} & {\mathbb{P}^n_{\overline{\Q}}}
    	\arrow["{\Psi_\C}", from=1-1, to=1-2]
    	\arrow["\beta", from=1-1, to=2-1]
    	\arrow[from=1-2, to=2-2]
    	\arrow["{\Psi_{\overline{\Q}}}", from=2-1, to=2-2]
    \end{tikzcd}\]
    The commutativity of the diagram indicates that $\Psi_\C(\beta(\mathfrak{Gr}_v))$ is contained in the hypersurface $S$. On the other hand, by \Cref{theorem: log-lats are generic},  the set $\{\Psi_{\overline{\Q}}(\beta(\mathfrak{Gr}_v))\}$ is dense in the image of $\Psi_{\overline{\Q}}$ and since $\Psi_{\overline{\Q}}(\beta(\mathfrak{Gr}_v))$ is contained in the closed subvariety $S$, it follows that $\Psi_{\overline{\Q}}(\mathfrak b)\in S$ for every $\mathfrak b\in\mathrm{Sym}^G(R_{\overline{\Q}})$ in the domain of $\Psi_{\overline{\Q}}$.
    
    Let $\mathfrak b\in\mathrm{Sym}^G(R_{\overline{\Q}})$ be an arbitrary bilinear form and set $(y_1,\dots,y_n)=\Psi_{\overline{\Q}}(\mathfrak{b}).$ Given that distinct isotypic components are orthogonal with respect to any bilinear form, we may decompose $\mathfrak b$ in the following way: for each nontrivial $\pi\in\mathrm{Irrep}(G)$ let $\mathfrak b_\pi\in\mathrm{Sym}^G(R_{\overline{\Q}})$ be the bilinear form that equals $\mathfrak{b}$ on $R_\pi$ and that is identically $0$ on $R_{\pi'}$ for every $\pi'\neq \pi$. Then $\mathfrak{b}:=\sum_{\pi}\mathfrak{b}_\pi$. 
    
    Let us now consider the following modification of $\mathfrak{b}$.
    Given $\pi'\in \mathrm{Irrep}(G)$ and $c\in\overline{\Q}$, let
    \[
    \mathfrak b(\pi',c):=c\mathfrak b_{\pi'}+\sum_{\pi\neq\pi'}\mathfrak b_{\pi}
    \]
    Then, since the determinant is a homogeneous polynomial, we have the formula
    \[
    \Psi_{\overline{\Q}}(\mathfrak b(\pi',c))=[c^{d_{\pi'}^{(1)}}y_1,c^{d_{\pi'}^{(2)}}y_2,\dots, c^{d_{\pi'}^{(n)}}y_n].
    \]
    Let \[d_{\pi'}:=\max_{i\in\{1,\dots,n\}}d_{\pi'}^{(i)}\quad\text{and}\quad D_{\pi'}:=\{i\in \{1,\dots,n\}:d_{\pi'}^{(i)}=d_{\pi'}\}.\]
    
    Note that the set $[n]\setminus D_{\pi'}$ is empty if and only if all the isotypic components associated to $\pi'$ of the representations $V_i$ are isomorphic, hence there exists an irreducible representation $\pi'$ for which the set $[n]\setminus D_{\pi'}$ nonempty. From now on we will fix $\pi'$ with this property. Observe also that $\Psi_{\overline{\Q}}(\mathfrak b(\pi',c))$ is contained in $S$. It follows that the polynomial in the variable $c$:
    \[
    f(\Psi_{\overline{\Q}}(\mathfrak b(\pi',c)))=\prod_{g\in\mathcal G_2}\left(\left(\sum_{i\in D_{\pi'}}c_i g_i \sqrt{y}_i\right)c^{d_{\pi'}}+\sum_{i\in[n]\setminus D_{\pi'}}(c_i g_i \sqrt{y}_i)c^{d_{\pi'}^{(i)}}\right)
    \]
   is identically zero. But this only holds if all the coefficients of all the powers of $c$ vanish. In particular, we must have
    \begin{equation}\label{eq: newlinearrelation}
    \sum_{i\in D_{\pi'}}c_ig_i \sqrt{y_i}=0,\quad \text{for some $g\in \mathcal{G}_2$}.      
    \end{equation}
    Now let $m:=\#D_{\pi'}$ and let $\mathcal{G}_2'$ be the quotient group $(\Z/2)^m$ by its diagonal subgroup. The group $\mathcal{G}_2'$ acts on $\mathbb{P}(\C[\{x_i\}_{i\in D_{\pi'}}]_e)$ as in \Cref{lemma: hypersurface}.
    Since \cref{eq: newlinearrelation} holds for arbitrary $\mathfrak b\in \mathrm{Sym}^G(R_{\overline{\Q}})$, we obtain that for every $\mathfrak b\in \mathrm{Sym}^G(R_{\overline{\Q}})$, the point $\psi_{\overline{\Q}}(\mathfrak b)$ is a zero of the polynomial $f_{\pi'}(\{x_i\}_{i\in D_{\pi'}})$ defined by replacing every instance of $x_i^2$ by $x_i$ in the polynomial:
    \[
    \prod_{g\in \mathcal G_2'}\sum_{i\in D_{\pi'}}c_ig_i x_i \in \overline{\Q}[\{x_i^2\}_{i\in D_\pi'}].
    \]
     We conclude that $f_{\pi'}$ vanishes in the whole image of $\Psi_{\overline{\Q}}$ and $\Psi_{\C}$. It follows that $f_{\pi'}(\Psi_{\C}(\mathfrak{Gr}(v)))=0$\, as well, and this gives that the set $\{\reg(K_i)\}_{i\in D_{\pi'}}$ is linearly dependent over $\overline{\Q}$. Given that $D_{\pi'}$ is a proper subset of $[n]$, the induction hypothesis implies $\lambda_{i}=0$ for all $i\in D_{\pi'}$. From here, we may apply the induction hypothesis again to the set $[n]\setminus D_{\pi'}$ to derive $\lambda_i=0$ for all $i\in [n]$. The result follows.
\end{proof}

\begin{corollary}\label{cor: Number fields with the same regulator}
    Assume \Cref{conjecture: independence of logs} and let $K_1$ and $K_2$ be two totally real number fields. If $\reg(K_1)$ and $\reg(K_2)$ are linearly dependent over the field of algebraic numbers $\overline{\Q}$, then $\zeta_{K_1}=\zeta_{K_2}$. Furthermore, $\reg(K_1)=\reg(K_2)$ if and only if $\zeta_{K_1}=\zeta_{K_2}$ and the class numbers $h_{K_1}$ and $h_{K_2}$ of $K_1$ and $K_2$, respectively, coincide.
\end{corollary}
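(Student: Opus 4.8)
The plan is to obtain the corollary from \Cref{Theorem: RegulatorArithmeticInvariant} together with the analytic class number formula. For a totally real field $K$ of degree $n$ one has $r_1=n$, $r_2=0$, $d_K>0$, and $w_K=2$ (the only roots of unity in a totally real field are $\pm1$), so the class number formula specialises to
\[
\mathrm{res}_{s=1}\zeta_K(s)=\frac{2^{\,n-1}h_K\,\reg(K)}{\sqrt{d_K}}.
\]
Since arithmetically equivalent number fields have the same degree and the same discriminant (see \cite{sutherland2018arithmetic}), the identity $\zeta_{K_1}=\zeta_{K_2}$ forces $[K_1:\Q]=[K_2:\Q]$, $d_{K_1}=d_{K_2}$, and equality of the residues, whence $h_{K_1}\reg(K_1)=h_{K_2}\reg(K_2)$. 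This observation already yields the ``furthermore'' clause once the first assertion is known: if moreover $h_{K_1}=h_{K_2}$ then $\reg(K_1)=\reg(K_2)$, and conversely if $\reg(K_1)=\reg(K_2)$ — two positive reals, hence a fortiori $\overline{\Q}$-linearly dependent — then $\zeta_{K_1}=\zeta_{K_2}$ by the first assertion and then $h_{K_1}=h_{K_2}$.

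So the heart of the matter is the first assertion: if $\reg(K_1)$ and $\reg(K_2)$ are $\overline{\Q}$-linearly dependent, then $\zeta_{K_1}=\zeta_{K_2}$. If $K_1=K_2$ this is trivial, so assume $K_1\neq K_2$. Suppose first that $K_1,K_2\neq\Q$. If we had $\zeta_{K_1}\neq\zeta_{K_2}$, the pair $K_1,K_2$ would satisfy the hypotheses of \Cref{Theorem: RegulatorArithmeticInvariant} with $n=2$, so $\{\reg(K_1),\reg(K_2)\}$ would be $\overline{\Q}$-linearly independent, contradicting the assumption; hence $\zeta_{K_1}=\zeta_{K_2}$. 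There remains the degenerate case where one of the fields, say $K_1$, equals $\Q$; then $\reg(K_1)=1\in\overline{\Q}$, so dependence forces $\reg(K_2)\in\overline{\Q}$, and I would rule this out by showing that $\reg(K)\notin\overline{\Q}$ for every totally real $K\neq\Q$. For this, let $N$ be the Galois closure of $K$, put $G=\Gal(N/\Q)$, $H=\Gal(N/K)$, and let $v=\Log(u)$ for a weak Minkowski unit $u\in\E_N^\times$; by \Cref{prop: norm of weak Mink unit} and \Cref{remark: Gram vs weakGram}, $\reg(K)^2$ equals a nonzero algebraic multiple of the value at $\mathfrak{Gr}_v\in\mathrm{Sym}^G(R_\C)$ of the polynomial $P\colon\mathfrak b\mapsto\det\big(\mathfrak b(w_k,w_l)\big)_{w_k,w_l\in B}$ on $\mathrm{Sym}^G(R_{\overline{\Q}})$, where $B$ is a $\overline{\Q}$-basis of $N_HR_{\overline{\Q}}$. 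Since $\#B=\dim_{\overline{\Q}}N_HR_{\overline{\Q}}=[K:\Q]-1\geq1$, the rescaling $\mathfrak b\mapsto c\,\mathfrak b$ multiplies $P$ by $c^{[K:\Q]-1}$, so $P$ is a non-constant $\overline{\Q}$-polynomial; as $P(\mathfrak{Gr}_v)\neq0$, the $\overline{\Q}$-genericity of $\mathfrak{Gr}_v$ (\Cref{theorem: log-lats are generic}) forces $P(\mathfrak{Gr}_v)\notin\overline{\Q}$, hence $\reg(K)\notin\overline{\Q}$. Thus the case $K_1=\Q\neq K_2$ cannot occur under the dependence hypothesis, and the implication holds vacuously there.

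The only step beyond bookkeeping is this last degenerate case, where one has to convert the genericity statement of \Cref{theorem: log-lats are generic} into transcendence of the regulator of a non-rational totally real field; if one instead restricts to $K_1,K_2\neq\Q$ (as \Cref{Theorem: RegulatorArithmeticInvariant} is stated), the corollary is an immediate consequence of that theorem and the class number formula, with no further input. I would also double-check the standard inputs invoked above — equality of degrees and discriminants for arithmetically equivalent fields, and $w_K=2$, $d_K>0$ for totally real $K$ — but these are routine.
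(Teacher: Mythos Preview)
Your argument is correct and follows the same route as the paper: invoke \Cref{Theorem: RegulatorArithmeticInvariant} for the first assertion and read the ``furthermore'' off the class number formula. The paper's proof is terser and does not explicitly treat the degenerate case $K_1=\Q\neq K_2$ (which its statement of \Cref{Theorem: RegulatorArithmeticInvariant} excludes); your extra paragraph disposing of that case via the $\overline{\Q}$-genericity of $\mathfrak{Gr}_v$ is a sound and welcome addition.
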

\begin{proof}
    If $\reg(K_1)$ and $\reg(K_2)$ are linearly dependent, then \Cref{Theorem: RegulatorArithmeticInvariant} gives $\zeta_{K_1}$ and $\zeta_{K_2}$. In particular, we must have
    \[
    h_{K_1}\reg(K_1)=h_{K_2}\reg(K_2).
    \]
    The result follows easily from here.
\end{proof}


\section{On the log unit lattice as an invariant for number fields}\label{section: isometry and similarity}
We now investigate the strength of the isometry class and the similarity class of the log unit lattice as a classifying invariant for totally real number fields.
We now prove \Cref{Theorem: CompleteInvariants} conditionally on \Cref{conjecture: independence of logs}. This theorem states that two real Galois number fields with similar log unit lattices are isomorphic.
\subsection{Remarks on the isometry class of the log unit lattice of a totally real number field}
We now observe that two totally real number fields with isometric log unit lattices must have the same regulator.
\begin{corollary}\label{cor: isometry}
     Assume \Cref{conjecture: independence of logs}. Let $K_1$ and $K_2$ be two totally real number fields and let $\Lambda_{K_1}$ and $\Lambda_{K_2}$ be the log unit lattices of $K_1$ and $K_2$, respectively. If $\Lambda_{K_1}$ and $\Lambda_{K_2}$ are isometric, then $\reg(K_1)=\reg(K_2)$. In particular, $\zeta_{K_1}=\zeta_{K_2}$ and $h_{K_1}=h_{K_2}$, where $h_{K_1}$ and $h_{K_2}$ are the class numbers of the corresponding number fields.
\end{corollary}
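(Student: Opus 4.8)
The plan is to deduce this directly from \Cref{cor: Number fields with the same regulator}; the only work is to check that an isometry of log unit lattices forces an honest equality of regulators (not merely of the projective points $[\reg(K_i)]$), and for that step \Cref{conjecture: independence of logs} is not needed at all.

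First I would note that isometric lattices have the same rank. Writing $n_i := [K_i:\Q]$ and using that $K_i$ is totally real, so it has $r_i = n_i$ real embeddings and $s_i = 0$ conjugate pairs of complex embeddings, the lattice $\Lambda_{K_i}$ has rank $r_i + s_i - 1 = n_i - 1$. Hence an isometry $\Lambda_{K_1} \cong \Lambda_{K_2}$ gives $n_1 = n_2 =: n$. Next, by the definition of isometry there are bases $B_1$ of $\Lambda_{K_1}$ and $B_2$ of $\Lambda_{K_2}$ with $\mathrm{Gr}_{B_1} = \mathrm{Gr}_{B_2}$, so taking determinants gives $\det(\Lambda_{K_1}) = \det(\Lambda_{K_2})$. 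Combining this with \Cref{definition: regulator} yields
\[
\reg(K_1) = \sqrt{\frac{\det(\Lambda_{K_1})}{r_1 + s_1}} = \sqrt{\frac{\det(\Lambda_{K_1})}{n}} = \sqrt{\frac{\det(\Lambda_{K_2})}{n}} = \reg(K_2).
\]

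With $\reg(K_1) = \reg(K_2)$ in hand, the remaining assertions are exactly \Cref{cor: Number fields with the same regulator}: it gives $\zeta_{K_1} = \zeta_{K_2}$, and then $h_{K_1} = h_{K_2}$ follows from the class number formula just as in the proof of that corollary (arithmetically equivalent fields share all the invariants besides $h_K\reg(K)$ that occur in the residue formula, while the residues of equal zeta functions coincide, so $h_{K_1}\reg(K_1) = h_{K_2}\reg(K_2)$). The argument has no real obstacle; the one point that warrants a moment's care is that the normalizing constant $\sqrt{r+s}$ relating $\det(\Lambda_K)$ to $\reg(K)^2$ is the same for both fields — which holds precisely because an isometry equates the ranks and, for totally real fields, the rank determines the degree.
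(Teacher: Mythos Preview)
Your proof is correct and follows essentially the same route as the paper's own argument: both observe that an isometry of totally real log unit lattices forces equal degrees (hence equal normalizing constants $\sqrt{r+s}=\sqrt{n}$) and equal determinants, whence $\reg(K_1)=\reg(K_2)$, and then invoke \Cref{cor: Number fields with the same regulator} for the remaining conclusions. Your write-up is slightly more explicit about why $h_{K_1}=h_{K_2}$, but the underlying argument is identical.
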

\begin{proof}
    Note that if $K_1$ and $K_2$ are two totally real number fields with isometric log unit lattices $\Lambda_{K_1}$ and $\Lambda_{K_2}$, then we must have that $[K_1:\Q]=[K_2:\Q]$. On the other hand, given that for any totally real number field $K$ one has the formula
\[
\reg(K)=\vol(\Lambda_K)/\sqrt{n},
\]
it follows that $\reg(K_1)=\reg(K_2)$. \Cref{cor: Number fields with the same regulator} concludes the rest of the statement of the corollary.
\end{proof}
The following example can be found in \cite[Remark 2.11]{mantilla2015weak}.
\begin{example}\label{example: nonisometric}
    Consider the septic number fields
    \begin{equation}
    \begin{aligned}
        x^7-2x^6-47x^5+25x^4+755x^3+496x^2-3782x-5217\\
        x^7-2x^6-47x^5-8x^4+480x^3+793x^2+233x+19
    \end{aligned}
\end{equation}
Let $K_1$ and $K_2$ be the number fields defined by the above polynomials. Then $K_1$, $K_2$ are totally real, arithmetically equivalent and they have the same regulator. A computation in pari shows that the corresponding log unit lattices are not isometric, since they have different minimum. Given that these lattices also have the same covolume and different minimum the lattices are not similar either. It is worth noting that Mantilla-Soler proved that the corresponding integral trace forms are not isometric. Hence, the regulator cannot determine neither the isometry nor the similarity class of the log unit lattice, and it also cannot determine the integral trace form. 
\end{example}


\subsection{The similarity class of the log unit lattice of a real Galois number field}
Let us now set our framework for the proof of \Cref{Theorem: CompleteInvariants}. For the rest of this section,
$K_1$ and $K_2$ denote real Galois number fields, $L:=K_1\cdot K_2$ denotes the compositum of $K_1$ and $K_2$, and  $M:=K_1\cap K_2$. Let $G:=\Gal(L/\Q)$, $G_1:=\Gal(K_1/\Q)$ and $G_2:=\Gal(K_2/\Q)$. We will also need the Galois groups $H_1:=\Gal(L/K_1)$, $H_2:=\Gal(L/K_2)$, $T_1:=\Gal(K_1/M), T_2:=\Gal(K_2/M)$ and $S:=\Gal(M/\Q)$. 
The following diagrams illustrate the fields and Galois groups at play:
\begin{center}\begin{equation}\label{diagram: fields diagram}\begin{tikzcd}
	& L &&&& L \\
	{K_1} && {K_2} && {K_1} && {K_2} \\
	& \Q &&&& M \\
	&&&&& \Q
	\arrow["{H_1}"', no head, from=1-2, to=2-1]
	\arrow["{H_2}", no head, from=1-2, to=2-3]
	\arrow["G", no head, from=1-2, to=3-2]
	\arrow["{H_1}"', no head, from=1-6, to=2-5]
	\arrow["{H_2}", no head, from=1-6, to=2-7]
	\arrow["T", no head, from=1-6, to=3-6]
	\arrow["{G_1}"', no head, from=2-1, to=3-2]
	\arrow["{G_2}", no head, from=2-3, to=3-2]
	\arrow["{T_1}"', no head, from=2-5, to=3-6]
	\arrow["{T_2}", no head, from=2-7, to=3-6]
	\arrow["S", no head, from=3-6, to=4-6]
\end{tikzcd}\end{equation}\end{center}

Let us remark on how Galois subextensions yield central idempotents of $R_\Q[G]$.
\begin{remark}\label{remark: norm of mink is mink}
Let $T\subset G$ be a normal subgroup. Then $e_T:=N_T/\#T\in R_\Q[G]$ is a central idempotent, and $e_TR_\Q[G]\cong R_\Q[G/T]$ as $R_\Q[G/T]$-modules. Moreover, if $K=\mathrm{Fix}(T)$ is the fixed field of $T$, then, by \Cref{prop: norm of weak Mink unit}, if $u\in \E_{L}^\times$ is a weak Minkowski unit of $L$, then the unit $u^{N_T}=N_{L/K}(u)$ is a weak Minkowski unit of $K$.
\end{remark}

We now establish the main setup for the proof of \Cref{Theorem: CompleteInvariants}. Let $F$ be a field of characteristic 0, and let $K_1$, $K_2$, $L$ and $M$, $G$, $G_1$, $G_2$, $H_1$, $H_2$, $S$, $T_1$, and $T_2$ be as in \cref{diagram: fields diagram}. Consider the left $R[G]$-modules: 
\begin{equation}
  W_F:=N_{T}R[G]\quad\text{and}\quad(W_i^\perp)_F:= (1-N_{T_i}/\#T_i)R_{F}[G_i],\ \text{for $i=1,2.$} 
\end{equation}
 Also consider the left $R_F[G]$-module $V_F=(N_{H_1}+N_{H_2})R_F[G]$. Then $N_{H_1}V_F\cong R_F[G_1]$ and $W_F\cong R_F[S]$ as $R_F[G]$-modules. Using this isomorphism, we find that $T_1$ and $T_2$ act on $N_{H_1}V_F$ in a well defined manner. From this, it is easy to deduce the following direct sum decompositions.
 \begin{equation}\label{equation: decomposition}
 \begin{aligned}
    &V_F=R_F[S]\oplus (W_1^\perp)_F\oplus (W_2^\perp)_F\\
&R_F[G_i]=R_{F}[S]\oplus (W_i^\perp)_F,\quad (i=1,2).  
 \end{aligned}
 \end{equation}
 

We are now ready for proving the main result of this section.

\begin{theorem}\label{Theorem: CompleteInvariants}
    Let $K_1/\Q$ and $K_2/\Q$ be two real Galois number fields. For $i=1,2$, let $n_i:=[K_i:\Q]$ and $G_i:=\Gal(K_i/\Q)$. If $n_1,n_2>3$, then $[\Lambda_{K_1}]=[\Lambda_{K_2}]$\, if and only if $K_1\cong K_2$.
\end{theorem}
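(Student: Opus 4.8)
The implication $K_1\cong K_2\Rightarrow[\Lambda_{K_1}]=[\Lambda_{K_2}]$ is immediate. For the converse the plan is to convert the similarity hypothesis into a polynomial identity satisfied by a Gram matrix of the log unit lattice of the compositum $L=K_1K_2$, to promote that identity to one valid on the whole variety $\mathrm{Sym}^G(R_{\overline{\Q}}[G])$ using \Cref{theorem: log-lats are generic}, and finally to read off that $K_1$ and $K_2$ are arithmetically equivalent --- hence isomorphic, since they are Galois.

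First I would set up the compositum. Assuming $[\Lambda_{K_1}]=[\Lambda_{K_2}]$, equality of ranks forces $n:=n_1=n_2$. Write $L=K_1K_2$, $G=\Gal(L/\Q)$, $H_i=\Gal(L/K_i)\trianglelefteq G$ (normal since $K_i/\Q$ is Galois), and $V_i:=N_{H_i}R_\Q[G]\cong R_\Q[G_i]$, a $\Q$-space of dimension $n-1$. Fix a weak Minkowski unit $u\in\E_L^\times$, set $v=\Log_L(u)$, and use \Cref{cor: module structure of log unit lattice} together with \Cref{prop: norm of weak Mink unit} to identify, compatibly with bilinear forms, $\Q\otimes\Lambda_{K_i}$ with $V_i$ equipped with $\tfrac{1}{\sqrt{[L:K_i]}}\,\mathfrak{Gr}_v\restriction_{V_i}$. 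The similarity then yields $A\in\mathrm{GL}_{n-1}(\Q)$ and $\lambda\in\R_{>0}$ with
\[
\mathrm{Gr}\bigl(\mathfrak{Gr}_v\restriction_{V_1}\bigr)=\lambda\,A^{\mathrm{tr}}\,\mathrm{Gr}\bigl(\mathfrak{Gr}_v\restriction_{V_2}\bigr)A .
\]
Each entry of $\mathrm{Gr}(\mathfrak{Gr}_w\restriction_{V_i})$ is a quadratic form in $w$ (ranging over the hyperplane $\mathcal H$ attached to $L$) with rational coefficients, and the coordinates of $v$ are algebraically independent over $\overline{\Q}$ by \Cref{Lemma: generic unit}; eliminating $\lambda$ produces degree-four polynomial identities in those coordinates, which therefore hold on all of $\mathcal H$. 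Hence there is a rational function $\mu$ on $\mathcal H$ with $\mathrm{Gr}(\mathfrak{Gr}_w\restriction_{V_1})=\mu(w)\,A^{\mathrm{tr}}\mathrm{Gr}(\mathfrak{Gr}_w\restriction_{V_2})A$ for all $w$.

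The decisive step --- and the one I expect to be the main obstacle --- is showing that $\mu$ is a nonzero constant in $\overline{\Q}$; this is exactly where $n_1,n_2>3$ is needed. Writing $\mu=p/q$ in lowest terms in the coordinate ring of $\mathcal H$, if $\deg q>0$ then $q$ divides every entry of $A^{\mathrm{tr}}\mathrm{Gr}(\mathfrak{Gr}_w\restriction_{V_2})A$, so the hypersurface $\{q=0\}$ would lie inside $Z:=\{w\in\mathcal H:\mathfrak{Gr}_w\restriction_{V_2}=0\}$. Since $\mathfrak{Gr}_w\restriction_{V_2}$ depends only on $e_2 w$ (with $e_2=N_{H_2}/\#H_2$) and $e_2\mathcal H\cong R_{\overline{\Q}}[G_2]$, the set $Z$ corresponds to $\{\,z : R_{\overline{\Q}}[G_2]\,z\text{ is totally isotropic for }\langle\cdot,\cdot\rangle\,\}$; splitting $R_{\overline{\Q}}[G_2]$ into its simple blocks (matrix algebras for self-dual irreducibles, swapped in pairs by $g\mapsto g^{-1}$ otherwise) via \Cref{lemma: isotypic comp are orthogonal}, $Z$ becomes a product over the blocks of proper subvarieties, and because $\#G_2=n_2\ge 4$ there are at least two nontrivial blocks, so $\operatorname{codim}Z\ge 2$ --- contradicting $\{q=0\}\subseteq Z$. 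Thus $q$ is constant, and since numerator and denominator of $\mu$ are homogeneous of degree two, $\mu\in\overline{\Q}^{\times}$.

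Finally I would harvest the conclusion. Constancy of $\mu$ gives $\mathfrak{Gr}_w\restriction_{V_1}=\mu\,A^{\mathrm{tr}}(\mathfrak{Gr}_w\restriction_{V_2})A$ for all $w\in\mathcal H$, hence, by surjectivity of $w\mapsto\mathfrak{Gr}_w$ onto $\mathrm{Sym}^G(R_{\overline{\Q}}[G])$ (\Cref{lemma:grammatrixMinkunit}) and the fact that restriction to $V_i$ is a coordinate projection of $\prod_{\pi\neq 1}\mathrm{Sym}^G((R_\pi)_{\overline{\Q}})$, the identity $\mathfrak b\restriction_{V_1}=\mu\,A^{\mathrm{tr}}(\mathfrak b\restriction_{V_2})A$ holds for \emph{every} $\mathfrak b\in\mathrm{Sym}^G(R_{\overline{\Q}}[G])$. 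If $V_1\not\cong V_2$ as $\overline{\Q}[G]$-modules, pick an irreducible $\pi$ occurring in, say, $V_1$ but not $V_2$, and take $\mathfrak b$ supported on and nondegenerate along the $\pi$-isotypic block: then $\mathfrak b\restriction_{V_1}\neq 0$ while $\mathfrak b\restriction_{V_2}=0$, a contradiction. Therefore $N_{H_1}\Q[G]\cong N_{H_2}\Q[G]$, so by \Cref{prop: Gassman criterion} $K_1$ and $K_2$ are arithmetically equivalent; being Galois, they are isomorphic. The hypothesis $n_1,n_2>3$ is sharp: for $n_i\in\{2,3\}$ the statement fails, since all real quadratic fields, respectively all cyclic cubic fields, have pairwise similar log unit lattices.
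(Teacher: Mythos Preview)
Your argument is correct and arrives at the same conclusion, but it is organized quite differently from the paper's proof. The paper introduces the intersection $M=K_1\cap K_2$ and the decomposition $V_F=R_F[S]\oplus (W_1^\perp)_F\oplus (W_2^\perp)_F$; the common block $\mathrm{Gr}_M$ appears on both sides of the similarity relation, and solving for $\lambda$ forces (by genericity) either $W_1^\perp=W_2^\perp=0$ (so $K_1=K_2$) or $\mathrm{Gr}_M=0$ (so $M=\Q$), after which a dimension count on $\mathrm{Sym}^{G_1}\times\mathrm{Sym}^{G_2}$ gives the contradiction for $n_i>3$. You bypass $M$ entirely: you eliminate $\lambda$ to get degree-four identities, promote them to all of $\mathcal H$, and then argue that the resulting proportionality factor $\mu(w)$ is a nonzero constant by showing that the locus $Z=\{w:\mathfrak{Gr}_w\restriction_{V_2}=0\}$ has codimension at least $2$ once $n_2\geq 4$.

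Both approaches rest on the same representation-theoretic fact: for $|G_i|\geq 4$ the nontrivial irreducibles of $G_i$ over $\overline{\Q}$ fall into at least two orbits under $\pi\mapsto\pi^*$. In the paper this shows $\dim\mathrm{Sym}^{G_i}(R[G_i])\geq 2$; in your proof it gives $\operatorname{codim} Z'\geq 2$. Your route is slightly more self-contained (no case split), at the cost of the codimension argument, which you leave somewhat sketchy; it would be worth noting explicitly that (i) since $H_i\trianglelefteq G$, the central idempotent $N_{H_i}/|H_i|$ acts as $0$ or $1$ on each simple factor of $R_{\overline{\Q}}[G]$, so ``$\pi$ occurs in $V_1$ but not $V_2$'' is indeed the only obstruction to $V_1\cong V_2$, and (ii) in each involution-block the vanishing locus $\{\nu:\nu\gamma\overline{\nu}=0\}$ is proper (take $\nu=1$), which is what makes the product codimension add up.
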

\begin{proof}
Let us first assume that $[\Lambda_{K_1}]=[\Lambda_{K_2}]$, then $K_1$ and $K_2$ must have the same degree, so $n_1=n_2$. Set
$L:=K_1\cdot K_2$ and $M=K_1\cap K_1$, and let  $G$, $G_1$, $G_2$, $H_1$, $H_2$, $S$, $T_1$, and $T_2$ be as in \cref{diagram: fields diagram}.

 Recall the decomposition from \ref{equation: decomposition}:
\[
V_F=R_F[S]\oplus(W_1^\perp)_F\oplus (W_2^\perp)_F.
\]
Now choose a weak Minkowski unit $u\in\E_L^\times$ of $L$, let $v=\Log(u)$ and consider the bilinear form $\mathfrak{Gr}_v:R[G]_\C\times R[G]_\C\rightarrow \C$ given by $\mathfrak{Gr}_v(\alpha,\beta)=\langle \alpha u,\beta u\rangle$.
Choose a basis $B_1,B_2$ and $B_M$ of $R_F[S]$, $W_F^\perp$ and $W_F^\perp$, respectively. Clearly, the union $B_1\cup B_2\cup B_M$ is a basis of $V_F$. Now, let
\[\begin{aligned}
\mathfrak{Gr}_M:=\mathfrak{Gr}_v\restriction_{ R_\C[S]\times R_\C[S]}\quad\text{and}\quad \mathfrak{Gr}_i=\mathfrak{Gr}_v\restriction_{(W_i^\perp)_F\times (W_i^\perp)_F},\quad(i=1,2).
\end{aligned}\]

Let us write $\mathrm{Gr}_1$, $\mathrm{Gr}_2$ and $\mathrm{Gr}_M$ for the corresponding Gram matrices. Note that by \Cref{remark: norm of mink is mink} the units
\[u^{N_{H_i}}=N_{L/K_i}\in \E_{K_i}^\times,\quad (i=1,2)\] 
are weak Minkowski units of $K_1$ and $K_2$, respectively. Furthermore
by \Cref{remark: Gram vs weakGram} and \Cref{lemma: including a log unit lattice}, for any Gram matrices $b_{K_1}$ and $b_{K_2}$ of $\Lambda_{K_1}$ and $\Lambda_{K_2}$, respectively, there exist matrices $C_1$ and $C_2$ and rational numbers $r_1, r_2\in \Q$ such that 
\[r_iC_i\begin{bmatrix}
	\mathrm{Gr}_i&\\&\mathrm{Gr}_M
\end{bmatrix}C_i^{tr}=b_{K_i},\quad (i=1,2).\]
 
It follows that the similarity of $\Lambda_{K_1}$ and $\Lambda_{K_2}$ gives the existence of a matrix $A\in GL_{n_1-1}(\Q)$ and a scalar $\lambda\in \R$ such that 
\[
\lambda     A\begin{bmatrix}
 \mathrm{Gr}_1&\\
    &\mathrm{Gr}_M
\end{bmatrix}A^{tr}=\begin{bmatrix}
 \mathrm{Gr}_2&\\
    &\mathrm{Gr}_M
\end{bmatrix}
\]
By solving for the scalar $\lambda$ in the above equation one obtains a rational algebraic relation between $\mathrm{Gr}_1$, $\mathrm{Gr}_2$, and $\mathrm{Gr}_M$, but since $(\mathrm{Gr}_1,\mathrm{Gr}_2,\mathrm{Gr}_M)\in\mathrm{Sym}^G(V_\C)$ is $\overline{\Q}$-generic, we must have $W_1=W_2=0$, or $W_M=0$. If $W_1=W_2=0$, then, by the definition of $V$ and $W$, we must have $K_1= K_2$ (viewed as subfields of $L$) and the result follows. So, suppose that $\mathrm{Gr}_M=0$, which implies $K_1\cap K_2=\Q$. It follows that
\[
\lambda A\mathrm{Gr}_1A^{tr}=\mathrm{Gr}_2.
\]
Given that the point $(\mathrm{Gr}_1,\mathrm{Gr}_2)\in \mathrm{Sym}^G(V_\C)$ is $\overline{\Q}$-generic and $A$ is an invertible rational matrix, we see that the variety $\mathrm{Sym}_\Q^{G_1}(W_1)\times\mathrm{Sym}_\Q^{G_2}(W_2)$ is two-dimensional at most. From this and from the fact that $M=\Q$, we obtain $[K_i:\Q]\leq 3$ for both $i=1,2$. The result follows.  
\end{proof}

\bibliographystyle{alpha}
\bibliography{sn-bibliography}

\end{document}